\theoremstyle{plain}
\newtheorem{theorem}{Theorem}[section]
\newtheorem{corollary}[theorem]{Corollary}
\newtheorem{lemma}[theorem]{Lemma}
\theoremstyle{definition}
\newtheorem{definition}[theorem]{Definition}
\newtheorem{example}[theorem]{Example}
\newtheorem{case}{Case}
\numberwithin{subcase}{case}
\numberwithin{subsubcase}{subcase}
\numberwithin{table}{section}
\numberwithin{equation}{section}
\newcommand{\singleedge}[5]{\begin{tikzpicture}%
[node distance=0.8cm,baseline={(left.base)},%
pre/.style={<-,shorten <=1pt,>=angle 45},%
post/.style={->,shorten >=1pt,>=angle 45}],%
rectangle/.style={inner sep=0pt,minimum size=4mm}];%
\node[rectangle] (left) {$\strut$#1};%
\node[rectangle] (right) [right=of left] {$\strut$#2}%
edge [#4,#5] node[yshift=5pt] {#3} (left);%
\end{tikzpicture}}
\newcommand{\solidedge}[3]{\singleedge{#1}{#2}{#3}{pre}{solid}}
\newcommand{\dashededge}[3]{\singleedge{#1}{#2}{#3}{pre}{dashed}}
\newcommand{\reversedsolidedge}[3]{\singleedge{#1}{#2}{#3}{post}{solid}}
\newcommand{\reverseddashededge}[3]{\singleedge{#1}{#2}{#3}{post}{dashed}}
\newcommand{\doubleedge}[9]{\begin{tikzpicture}%
[node distance=0.8cm,baseline={(middle.base)},%
pre/.style={<-,shorten <=1pt,>=angle 45},%
post/.style={->,shorten >=1pt,>=angle 45}],%
rectangle/.style={inner sep=0pt,minimum size=5mm}];%
\node[rectangle] (left) {$\strut$#1};%
\node[rectangle] (middle) [right=of left] {$\strut$#2}%
edge [#6,#7] node[yshift=5pt] {#3} (left);%
\node[rectangle] (right) [right=of middle] {$\strut$#4}%
edge [#8,#9] node[yshift=5pt] {#5} (middle);%
\end{tikzpicture}}
\newcommand{\solidrightsolidright}[5]{\doubleedge{#1}{#2}{#3}{#4}{#5}{solid}{pre}{solid}{pre}}
\newcommand{\solidrightdashedright}[5]{\doubleedge{#1}{#2}{#3}{#4}{#5}{solid}{pre}{dashed}{pre}}
\newcommand{\dashedrightsolidright}[5]{\doubleedge{#1}{#2}{#3}{#4}{#5}{dashed}{pre}{solid}{pre}}
\newcommand{\dashedrightdashedright}[5]{\doubleedge{#1}{#2}{#3}{#4}{#5}{dashed}{pre}{dashed}{pre}}
\newcommand{\solidrightsolidleft}[5]{\doubleedge{#1}{#2}{#3}{#4}{#5}{solid}{pre}{solid}{post}}
\newcommand{\solidrightdashedleft}[5]{\doubleedge{#1}{#2}{#3}{#4}{#5}{solid}{pre}{dashed}{post}}
\newcommand{\dashedrightsolidleft}[5]{\doubleedge{#1}{#2}{#3}{#4}{#5}{dashed}{pre}{solid}{post}}
\newcommand{\dashedrightdashedleft}[5]{\doubleedge{#1}{#2}{#3}{#4}{#5}{dashed}{pre}{dashed}{post}}
\newcommand{\solidleftsolidright}[5]{\doubleedge{#1}{#2}{#3}{#4}{#5}{solid}{post}{solid}{pre}}
\newcommand{\solidleftdashedright}[5]{\doubleedge{#1}{#2}{#3}{#4}{#5}{solid}{post}{dashed}{pre}}
\newcommand{\dashedleftsolidright}[5]{\doubleedge{#1}{#2}{#3}{#4}{#5}{dashed}{post}{solid}{pre}}
\newcommand{\dashedleftdashedright}[5]{\doubleedge{#1}{#2}{#3}{#4}{#5}{dashed}{post}{dashed}{pre}}
\newcommand{\solidleftsolidleft}[5]{\doubleedge{#1}{#2}{#3}{#4}{#5}{solid}{post}{solid}{post}}
\newcommand{\solidleftdashedleft}[5]{\doubleedge{#1}{#2}{#3}{#4}{#5}{solid}{post}{dashed}{post}}
\newcommand{\dashedleftsolidleft}[5]{\doubleedge{#1}{#2}{#3}{#4}{#5}{dashed}{post}{solid}{post}}
\newcommand{\dashedleftdashedleft}[5]{\doubleedge{#1}{#2}{#3}{#4}{#5}{dashed}{post}{dashed}{post}}
\newcommand{\Gammaarrow}{\Gamma_{\!\to}}
\newcommand{\Gammadir}{{\Gamma_{\text{dir}}}}
\newcommand{\Gammaundir}{{\Gamma_{\text{undir}}}}
\newcommand{\Gammarev}{{\Gamma_{\text{rev}}}}
\newcommand{\myatop}[2]{\genfrac{}{}{0pt}{}{#1}{#2}}
\newcommand{\infinity}{\infty}
\newcommand{\set}[1]{\left\{#1\right\}}
\newcommand{\setof}[2]{\left\{{#1}\mid{#2}\right\}}
\newcommand{\spanof}[1]{\left<#1\right>}
\newcommand{\abs}[1]{\left\vert{#1}\right\vert}
\newcommand{\rationals}{\mathbb Q}%
\newcommand{\complexes}{\mathbb C}
\newcommand{\eval}[3]{\left.{#1}\right\vert_{#2 = #3}}
\newcommand{\closedray}[1]{{\left[{#1},{\infinity}\right)}}
\newcommand{\fieldsymbol}{\mathbb F}
\DeclareMathOperator{\ind}{ind}
\DeclareMathOperator{\sgn}{sgn}
\DeclareMathOperator{\tr}{tr}
\DeclareMathOperator{\linspan}{span}
\DeclareMathOperator{\GL}{GL}
\DeclareMathOperator{\gl}{gl}
\DeclareMathOperator{\vertices}{{\mathscr V}}
\DeclareMathOperator{\edges}{{\mathscr E}}
\DeclareMathOperator{\In}{In}
\begin{document}
\title{A class of representations of Hecke algebras}
\author{Dean Alvis}

\date{}

\maketitle

\begin{abstract}
A type of directed multigraph called a 
{\it $W$-digraph} is introduced to model the
structure of certain representations of Hecke algebras, including those 
constructed by Lusztig and Vogan from involutions
in a Weyl group.
Building on results of Lusztig,
a complete characterization of $W$-digraphs is given 
in terms of subdigraphs for dihedral parabolic subgroups.
In addition, results  are obtained relating 
graph-theoretic properties of $W$-digraphs 
(acyclicity, existence of sources or sinks, connectedness) 
to the structure of the corresponding
$H$-module or its character.
\end{abstract}


\setcounter{section}{-1}
\section{Overview}
Let $W$ be a Weyl group with set of fundamental generators $S$
and length function $\ell$, 
let $u$ be an indeterminate, 
and let $H$ be the Hecke algebra
of $(W,S)$ over $\rationals(u)$.  
(See the next section for a presentation of $H$.)
Put $I = \setof{w \in W}{w^{-1} = w}$.  
In \cite{lusztigvoganinvolutions}, Lusztig and Vogan construct
an $H$-module $M$ with basis $\setof{m_w}{w\in I}$ indexed
by $I$, on which the generator $T_s$ of
 $H$ acts according to the rule
\begin{equation}
T_s m_w
=
\begin{cases}
m_{sws} & \text{ if $sw \ne ws$, $\ell(sw) > \ell(w)$}, \cr
(u^2-1) m_{w} + u^2 m_{sws} & \text{ if $sw \ne ws$, $\ell(sw) < \ell(w)$}, \cr
u m_{w} + (u+1) m_{sw} & \text{ if $sw = ws$, $\ell(sw) > \ell(w)$}, \cr
(u^2-u-1) m_{w} + (u^2-u) m_{sw} & \text{ if $sw = ws$, $\ell(sw) < \ell(w)$}, \cr
\end{cases}
\end{equation}
for $s \in S$, $w \in I$.  
These expressions are given
geometric interpretations in \cite{lusztigvoganinvolutions}:
when $u$ is  replaced by a power $q$ of a prime number,
each coefficient in the expansion of $T_s m_w$ evaluates to
  the number of $\fieldsymbol_q$-rational points in a 
corresponding subset
of a variety constructed from Borel subgroups in an 
algebraic group with Weyl group W.  
(See 1.1-1.6 of \cite{lusztigvoganinvolutions} for the details
of this construction, and  Lusztig's paper \cite{lusztigbarop}
for an extension to arbitrary
Coxeter groups.)

The present work originated in the author's attempt to
visualize the structure of the $H$-module $M$ described 
above.  A directed multigraph $\Gamma$ can be
constructed, with set of vertices $\setof{m_w}{w \in I}$, 
as follows. 
If $w \in I$, $s \in S$, $sw \ne ws$, and
$\ell(w) < \ell(sw)$, 
then a solid
edge \solidedge{$m_w$}{$m_{sws}$}{$s$} is included in $\Gamma$,
while if 
$sw = ws$ and $\ell(w) < \ell(sw)$, then
a dashed edge
\dashededge{$m_{w}$}{$m_{sw}$}{$s$} is included.
The result is an example of what will be called a
$W$-digraph (see Definition~\ref{definition:wdigraph}).
In broad terms, the notion of $W$-digraph is similar
to the notion of $W$-graph introduced by Kazhdan and Lusztig
in \cite{kazhdanlusztig}: both give rise to graph-theoretic 
objects that encode the action of the generators $T_s$, $s \in S$,
on an $H$-module.
There are also combinatorial similarities: 
if a finite dimensional $H$-module $M$ affords both a 
$W$-digraph $\Gamma$ and a $W$-graph $\Psi$, then
the number edges labeled $s \in S$ in $\Gamma$ 
is equal to 
the multiplicity of the eigenvalue $-1$ of $T_s$ on $M$
(see Lemma~\ref{lemma:eigenvalues}(i)), which in turn is equal to the
number of vertices with label including $s$ in $\Psi$.  

On the other hand, there are significant differences between
the notions of $W$-digraph and $W$-graph, including
the obvious structural differences: 
a $W$-digraph is directed rather than undirected, 
can have two different
types of edges (corresponding to commutation relations
in the motivating example above)
rather than one type, and has generators labeling edges
rather than scalars.
The encodings of the actions of generators for $W$-digraphs
and $W$-graphs are necessarily different.  
Further,  the class of modules 
afforded by $W$-digraphs need not coincide with the class
of modules afforded by $W$-graphs.
When $(W,S)$ is finite and $S \ne \emptyset$, 
not all $H$-modules are afforded by $W$-digraphs, 
whereas every $H$-module is afforded 
by a $W$-graph over a suitable field of scalars 
(Gyoja, \cite{gyojawgraph}). 
Specifically, when
$S \ne \emptyset$,   the 
sign representation $T_s \mapsto -1$ is not afforded by 
a $W$-digraph,
(see Theorem~\ref{theorem:linearcharmults}(i)), but is afforded
by the $W$-graph with a single vertex labeled $S$.
In the other direction, an example can be given of
an infinite $(W,S)$ and corresponding $H$-module
that is afforded by a $W$-digraph but  
is not afforded by a $W$-graph
(see Theorem~\ref{theorem:wgraphacyclic} 
and Example~\ref{example:affinea2}).  


\section{Statement of Results}

The Coxeter system $(W,S)$ has a presentation of the form 
\[
W =
\left<
s \in S
\, \mid \,
(rs)^{n(r,s)}=e \text{ for $r,s\in S$, $n(r,s)<\infinity$}
\right> ,
\]
where $n(s,s) = 1$ and $1<n(r,s)=n(s,r)\le\infinity$ for $r,s\in S$, $r \ne s$.
The Hecke algebra $H$ has basis
$\setof{T_w}{w \in W}$ satisfying
\begin{equation}
\label{eq:leftmultbys}
T_s T_w = 
\begin{cases}
T_{sw} & \text{if $\ell(sw) > \ell(w)$,} \\
u^2 T_{sw} + (u^2-1)T_w & \text{if $\ell(sw) < \ell(w)$} \\
\end{cases}
\end{equation}
for $s \in S$.
As a $\rationals(u)$-algebra,
$H$ has generators $\setof{T_s}{s\in S}$ satisfying the relations 
\begin{subequations}
\begin{align}
\label{eq:quadraticrelation}
& (T_s - u^2)(T_s + 1) 
= 0
\qquad &&
\text{if $s\in S$,} \\
\label{eq:dihedralrelation}
& \overbrace{ T_s  T_t \cdots }^{n} 
=   
\overbrace{ T_t T_s \cdots  }^{n}
\qquad && 
\text{if $s,t \in S$, $1 < n = n(s,t)<\infinity$}
\end{align}
\end{subequations}
(where the factors in the products of \eqref{eq:dihedralrelation} 
are alternately
$T_s$ and $T_t$).
Moreover, 
\begin{equation*}
T_x T_y = T_{xy}
\qquad
\text{if $\ell(xy) = \ell(x)+\ell(y)$.}
\end{equation*}


\begin{definition}
Let $S$ be a set.  
Let $\Gamma = (\vertices,\edges)$ 
be a directed multigraph with set of vertices
$\vertices = \vertices(\Gamma)$ and set of edges
$\edges = \edges(\Gamma)$ such that each edge is
either solid or dashed and is labeled by an element of $S$,
that is, has one of the forms
\[
\solidedge{$\alpha$}{$\beta$}{$s$} 
\qquad
\text{or}
\qquad
\dashededge{$\alpha$}{$\beta$}{$s$} 
\]
with $\alpha, \beta \in \vertices$, $s \in S$.
 Then $\Gamma$ is an
{\it $S$-labeled digraph} if
$\Gamma$ has no loops and, for all $s \in S$, 
every vertex of $\Gamma$ occurs in exactly one edge labeled $s$.
\end{definition}

Examples of $S$-labeled digraphs appear in 
Figures~\ref{fig:ex2}--\ref{fig:ex3}.
\begin{figure}[ht]
\begin{minipage}{0.5\textwidth}
\centering
{%
\begin{tikzpicture}
[node distance=0.8cm,%
pre/.style={<-,shorten <=1pt,>=angle 45},%
post/.style={->,shorten >=1pt,>=angle 45}];%
rectangle/.style={inner sep=0pt,minimum size=5mm}];%
\node[rectangle] (left)									{$\gamma_1$};
\node[rectangle] (top1)	[above=of left,xshift=10mm,yshift=-2mm]	{$\gamma_2$}
	edge		[pre,dashed]		node[yshift=2mm,xshift=-2mm]	{$s$}		(left);
\node[rectangle] (top2)	[right=of top1]						{$\gamma_3$}
	edge		[pre]			node[yshift=2mm]				{$t$}		(top1);
\node[rectangle] (bottom1)	[below=of left,xshift=10mm,yshift=2mm]	{$\gamma_6$}
	edge		[pre]		node[yshift=-2mm,xshift=-2mm]			{$t$}		(left);
\node[rectangle] (bottom2)	[right=of bottom1]				{$\gamma_5$}
	edge		[pre]			node[yshift=-2mm]				{$s$}		(bottom1);
\node[rectangle] (right) [below=of top2,xshift=10mm,yshift=2.5mm]	{$\gamma_4$}
	edge		[pre]		node[xshift=2mm,yshift=2mm]			{$s$}		(top2)
	edge		[pre,dashed]	node[xshift=2mm,yshift=-2mm]		{$t$}		(bottom2);
\end{tikzpicture}
}%
\caption{An $\set{s,t}$-labeled digraph.}
\label{fig:ex2}
\end{minipage}%
\begin{minipage}{0.5\textwidth}
\centering
{%
\begin{tikzpicture}
[node distance=0.6cm,%
pre/.style={<-,shorten <=1pt,>=angle 45},%
post/.style={->,shorten >=1pt,>=angle 45}];%
rectangle/.style={inner sep=0pt,minimum size=5mm}];%
\node[rectangle]	(middle)				{$\gamma_4$};
\node[rectangle]	(top)	[above=of middle,yshift=6mm]	{$\gamma_1$}
	edge		[pre]		node[xshift=-2mm,yshift=-2mm]	{$t$}		(middle);
\node[rectangle]	(left)	[below=of middle,xshift=-20mm,yshift=3mm]	{$\gamma_3$}
	edge		[post]	node[xshift=-2.5mm] {$s$}  (top)
	edge		[post,dashed]		node[yshift=2.5mm]	{$r$}		(middle);
\node[rectangle]	(right) [below=of middle,xshift=20mm,yshift=3mm]	{$\gamma_2$}
	edge		[post,dashed]	node[xshift=2.5mm]	{$r$}		(top)
	edge		[pre]		node[yshift=2.5mm]	{$s$} (middle)
	edge		[pre]	node[yshift=-2.5mm]	{$t$}		(left);
\end{tikzpicture}
}%
\caption{An $\set{r,s,t}$-labeled digraph.}
\label{fig:ex3}
\end{minipage}%
\end{figure}

 
 Let $\Gamma$ be an $S$-labeled digraph.  
Let $M(\Gamma)$ be a vector space over 
$\rationals(u)$ with basis $\vertices(\Gamma)$, and
let $\gl(M(\Gamma))$ be the 
$\rationals(u)$-algebra of all linear operators on $M(\Gamma)$.
For each $s\in S$, define 
$\tau_s \in \gl(M(\Gamma)$ as follows: if $\alpha\in \vertices(\Gamma)$, then
\begin{equation}
\label{eq:taudefinition}
\tau_s(\alpha)
=
\begin{cases}
\beta & \quad
\text{if }\solidedge{$\alpha$}{$\beta$}{$s$} \in \edges(\Gamma), \\
(u^2-1)\alpha + u^2\beta & \quad
\text{if }\reversedsolidedge{$\alpha$}{$\beta$}{$s$} \in \edges(\Gamma), \\
u \alpha + (u+1)\beta & \quad
\text{if }\dashededge{$\alpha$}{$\beta$}{$s$} \in \edges(\Gamma),  \\
(u^2-u-1) \alpha + (u^2-u)\beta & \quad
\text{if }\reverseddashededge{$\alpha$}{$\beta$}{$s$} \in \edges(\Gamma).
\end{cases}
\end{equation}


\begin{definition}
\label{definition:wdigraph}
An $S$-labeled digraph 
$\Gamma$ is a {\it $W$-digraph} if
the mapping $T_s \mapsto \tau_s$ extends to a 
representation of $H$, that is, a homomorphism of 
$\rationals(u)$-algebras $\rho : H \rightarrow \gl \left(M(\Gamma) \right)$. 
\end{definition}
\noindent


Let $J \subseteq S$, so $(W_J, J)$ is a Coxeter system
with $W_J = \spanof{J}$ the associated parabolic subgroup of $W$. 
For $\Gamma$ an $S$-labeled digraph,  denote by  $\Gamma_J$
the subdigraph with the same set of vertices
 obtained from $\Gamma$ by removing 
all edges labeled by elements of $S \setminus J$. Thus $\Gamma_J$
is a $J$-labeled digraph.  
If $\Gamma$ is a $W$-digraph, then clearly $\Gamma_J$ 
is a $W_J$-digraph.  
Conversely, because of the presentation
\eqref{eq:quadraticrelation}, 
\eqref{eq:dihedralrelation} it is also
clear that $\Gamma$ is a $W$-digraph
if $\Gamma_J$ is a $W_J$-digraph whenever 
$J \subseteq S$, $\abs{J} \le 2$.  Note also that $\Gamma$ is a 
$W$-digraph if and only if each connected component of $\Gamma$ is
a $W$-digraph.

In Figures~\ref{fig:main1}--\ref{fig:main8} several $J$-labeled 
 digraphs are given with
$J = \set{s,t}$.
These multigraphs have $2m$ vertices, with $m \ge 2$
except for Figures~\ref{fig:main7}--\ref{fig:main8}.  
Also,  $s^\prime = s$ if $m$ is even,
$s^\prime = t$ if $m$ is odd,   $t^\prime$ is defined by
$\set{s^\prime,t^\prime}=\set{s,t}$,
and any edge not shown has one 
of the forms
\solidedge{$\alpha_{2j}$}{$\alpha_{2j+1}$}{$s$}, 
\solidedge{$\alpha_{2j-1}$}{$\alpha_{2j}$}{$t$}, 
\solidedge{$\beta_{2j-1}$}{$\beta_{2j}$}{$s$}, or 
\solidedge{$\beta_{2j}$}{$\beta_{2j+1}$}{$t$}.

\begin{figure}[ht]
\begin{minipage}{0.33\textwidth}
\centering
{%
\begin{tikzpicture}
[node distance=0.4cm,%
pre/.style={<-,shorten <=1pt,>=angle 45},%
post/.style={->,shorten >=1pt,>=angle 45}];%
rectangle/.style={inner sep=0pt,minimum size=5mm}];%
\node[rectangle] (top) {${\strut}\alpha_0$};
\node[rectangle] (left1) [below=of top,xshift=-10mm] {${\strut}\alpha_1$}
   edge [pre] node[xshift=-2mm,yshift=2mm] {$s$} (top);
\node[rectangle] (right1) [below=of top,xshift=12mm] {${\strut}\beta_1$}  
   edge [pre] node[xshift=2mm,yshift=2mm] {$t$} (top);
\node[rectangle] (left2) [below=of left1] {${\strut}\alpha_2$}
   edge [pre] node[xshift=-2.5mm] {$t$} (left1);
\node[rectangle] (right2) [below=of right1] {${\strut}\beta_2$}
   edge [pre] node[xshift=2.5mm] {$s$} (right1);
\node[rectangle] (leftmid) [below=of left2,yshift=8mm] {${\strut}\vdots$};
\node[rectangle] (left3) [below=of leftmid,yshift=8mm] {${\strut}\alpha_{m-2}$};
\node[rectangle] (rightmid) [below=of right2,yshift=8mm] {${\strut}\vdots$};
\node[rectangle] (right3) [below=of rightmid,yshift=8mm] {${\strut}\beta_{m-2}$};
\node[rectangle] (left4) [below=of left3] {${\strut}\alpha_{m-1}$}
   edge [pre] node[xshift=-2.5mm] {$s^\prime$} (left3);
\node[rectangle] (right4) [below=of right3] {${\strut}\beta_{m-1}$}
   edge [pre] node[xshift=2.5mm] {$t^\prime$} (right3);
\node[rectangle] (bottom) [below=of right4,xshift=-10mm] {${\strut}\beta_m$}
   edge [pre] node[xshift=-2mm,yshift=-2mm] {$t^\prime$} (left4)
   edge [pre] node[xshift=1.5mm,yshift=-2mm] {$s^\prime$} (right4);
\end{tikzpicture}
}%
\caption{}
\label{fig:main1}
\end{minipage}%
\begin{minipage}{0.33\textwidth}
\centering
{%
\begin{tikzpicture}
[node distance=0.4cm,%
pre/.style={<-,shorten <=1pt,>=angle 45},%
post/.style={->,shorten >=1pt,>=angle 45}];%
rectangle/.style={inner sep=0pt,minimum size=5mm}];%
\node[rectangle] (top) {${\strut}\alpha_0$};
\node[rectangle] (left1) [below=of top,xshift=-10mm] {${\strut}\alpha_1$}
   edge [pre,dashed] node[xshift=-2mm,yshift=2mm] {$s$} (top);
\node[rectangle] (right1) [below=of top,xshift=12mm] {${\strut}\beta_1$}  
   edge [pre] node[xshift=2mm,yshift=2mm] {$t$} (top);
\node[rectangle] (left2) [below=of left1] {${\strut}\alpha_2$}
   edge [pre] node[xshift=-2.5mm] {$t$} (left1);
\node[rectangle] (right2) [below=of right1] {${\strut}\beta_2$}
   edge [pre] node[xshift=2.5mm] {$s$} (right1);
\node[rectangle] (leftmid) [below=of left2,yshift=8mm] {${\strut}\vdots$};
\node[rectangle] (left3) [below=of leftmid,yshift=8mm] {${\strut}\alpha_{m-2}$};
\node[rectangle] (rightmid) [below=of right2,yshift=8mm] {${\strut}\vdots$};
\node[rectangle] (right3) [below=of rightmid,yshift=8mm] {${\strut}\beta_{m-2}$};
\node[rectangle] (left4) [below=of left3] {${\strut}\alpha_{m-1}$}
   edge [pre] node[xshift=-2.5mm] {$s^\prime$} (left3);
\node[rectangle] (right4) [below=of right3] {${\strut}\beta_{m-1}$}
   edge [pre] node[xshift=2.5mm] {$t^\prime$} (right3);
\node[rectangle] (bottom) [below=of right4,xshift=-10mm] {${\strut}\beta_m$}
   edge [pre] node[xshift=-2mm,yshift=-2mm] {$t^\prime$} (left4)
   edge [pre,dashed] node[xshift=1.5mm,yshift=-2mm] {$s^\prime$} (right4);
\end{tikzpicture}
}%
\caption{}
\label{fig:main2}
\end{minipage}%
\begin{minipage}{0.33\textwidth}
\centering
{%
\begin{tikzpicture}
[node distance=0.4cm,%
pre/.style={<-,shorten <=1pt,>=angle 45},%
post/.style={->,shorten >=1pt,>=angle 45}];%
rectangle/.style={inner sep=0pt,minimum size=5mm}];%
\node[rectangle] (top) {${\strut}\alpha_0$};
\node[rectangle] (left1) [below=of top,xshift=-10mm] {${\strut}\alpha_1$}
   edge [pre] node[xshift=-2mm,yshift=2mm] {$s$} (top);
\node[rectangle] (right1) [below=of top,xshift=12mm] {${\strut}\beta_1$}  
   edge [pre,dashed] node[xshift=2mm,yshift=2mm] {$t$} (top);
\node[rectangle] (left2) [below=of left1] {${\strut}\alpha_2$}
   edge [pre] node[xshift=-2.5mm] {$t$} (left1);
\node[rectangle] (right2) [below=of right1] {${\strut}\beta_2$}
   edge [pre] node[xshift=2.5mm] {$s$} (right1);
\node[rectangle] (leftmid) [below=of left2,yshift=8mm] {${\strut}\vdots$};
\node[rectangle] (left3) [below=of leftmid,yshift=8mm] {${\strut}\alpha_{m-2}$};
\node[rectangle] (rightmid) [below=of right2,yshift=8mm] {${\strut}\vdots$};
\node[rectangle] (right3) [below=of rightmid,yshift=8mm] {${\strut}\beta_{m-2}$};
\node[rectangle] (left4) [below=of left3] {${\strut}\alpha_{m-1}$}
   edge [pre] node[xshift=-2.5mm] {$s^\prime$} (left3);
\node[rectangle] (right4) [below=of right3] {${\strut}\beta_{m-1}$}
   edge [pre] node[xshift=2.5mm] {$t^\prime$} (right3);
\node[rectangle] (bottom) [below=of right4,xshift=-10mm] {${\strut}\beta_m$}
   edge [pre,dashed] node[xshift=-2mm,yshift=-2mm] {$t^\prime$} (left4)
   edge [pre] node[xshift=1.5mm,yshift=-2mm] {$s^\prime$} (right4);
\end{tikzpicture}
}%
\caption{}
\label{fig:main3}
\end{minipage}%
\end{figure}


Two $S$-labeled digraphs $\Gamma = (\vertices, \edges)$ and 
$\Gamma^\prime = (\vertices^\prime,\edges^\prime)$ 
are
{\it isomorphic} if there is some 
bijection $\varphi : \vertices \rightarrow \vertices^\prime$ such that
for all $\alpha,\beta \in \vertices$ and $s\in S$, 
$\!\solidedge{$\alpha$}{$\beta$}{$s$}\in\edges$ 
if and only if
$\!\solidedge{$\varphi(\alpha)$}{$\varphi(\beta)$}{$s$} \in \edges^\prime$
and
$\!\dashededge{$\alpha$}{$\beta$}{$s$}\in\edges$
if and only if
$\!\dashededge{$\varphi(\alpha)$}{$\varphi(\beta)$}{$s$} \in \edges^\prime$.


\begin{figure}[ht]
\begin{minipage}{0.33\textwidth}
\centering
{%
\begin{tikzpicture}
[node distance=0.4cm,%
pre/.style={<-,shorten <=1pt,>=angle 45},%
post/.style={->,shorten >=1pt,>=angle 45}];%
rectangle/.style={inner sep=0pt,minimum size=5mm}];%
\node[rectangle] (top) {${\strut}\alpha_0$};
\node[rectangle] (left1) [below=of top,xshift=-10mm] {${\strut}\alpha_1$}
   edge [pre,dashed] node[xshift=-2mm,yshift=2mm] {$s$} (top);
\node[rectangle] (right1) [below=of top,xshift=12mm] {${\strut}\beta_1$}  
   edge [pre,dashed] node[xshift=2mm,yshift=2mm] {$t$} (top);
\node[rectangle] (left2) [below=of left1] {${\strut}\alpha_2$}
   edge [pre] node[xshift=-2.5mm] {$t$} (left1);
\node[rectangle] (right2) [below=of right1] {${\strut}\beta_2$}
   edge [pre] node[xshift=2.5mm] {$s$} (right1);
\node[rectangle] (leftmid) [below=of left2,yshift=8mm] {${\strut}\vdots$};
\node[rectangle] (left3) [below=of leftmid,yshift=8mm] {${\strut}\alpha_{m-2}$};
\node[rectangle] (rightmid) [below=of right2,yshift=8mm] {${\strut}\vdots$};
\node[rectangle] (right3) [below=of rightmid,yshift=8mm] {${\strut}\beta_{m-2}$};
\node[rectangle] (left4) [below=of left3] {${\strut}\alpha_{m-1}$}
   edge [pre] node[xshift=-2.5mm] {$s^\prime$} (left3);
\node[rectangle] (right4) [below=of right3] {${\strut}\beta_{m-1}$}
   edge [pre] node[xshift=2.5mm] {$t^\prime$} (right3);
\node[rectangle] (bottom) [below=of right4,xshift=-10mm] {${\strut}\beta_m$}
   edge [pre] node[xshift=-2mm,yshift=-2mm] {$t^\prime$} (left4)
   edge [pre] node[xshift=1.5mm,yshift=-2mm] {$s^\prime$} (right4);
\end{tikzpicture}
}%
\caption{}
\label{fig:main4}
\end{minipage}%
\begin{minipage}{0.33\textwidth}
\centering
{%
\begin{tikzpicture}
[node distance=0.4cm,%
pre/.style={<-,shorten <=1pt,>=angle 45},%
post/.style={->,shorten >=1pt,>=angle 45}];%
rectangle/.style={inner sep=0pt,minimum size=5mm}];%
\node[rectangle] (top) {${\strut}\alpha_0$};
\node[rectangle] (left1) [below=of top,xshift=-10mm] {${\strut}\alpha_1$}
   edge [pre] node[xshift=-2mm,yshift=2mm] {$s$} (top);
\node[rectangle] (right1) [below=of top,xshift=12mm] {${\strut}\beta_1$}  
   edge [pre] node[xshift=2mm,yshift=2mm] {$t$} (top);
\node[rectangle] (left2) [below=of left1] {${\strut}\alpha_2$}
   edge [pre] node[xshift=-2.5mm] {$t$} (left1);
\node[rectangle] (right2) [below=of right1] {${\strut}\beta_2$}
   edge [pre] node[xshift=2.5mm] {$s$} (right1);
\node[rectangle] (leftmid) [below=of left2,yshift=8mm] {${\strut}\vdots$};
\node[rectangle] (left3) [below=of leftmid,yshift=8mm] {${\strut}\alpha_{m-2}$};
\node[rectangle] (rightmid) [below=of right2,yshift=8mm] {${\strut}\vdots$};
\node[rectangle] (right3) [below=of rightmid,yshift=8mm] {${\strut}\beta_{m-2}$};
\node[rectangle] (left4) [below=of left3] {${\strut}\alpha_{m-1}$}
   edge [pre] node[xshift=-2.5mm] {$s^\prime$} (left3);
\node[rectangle] (right4) [below=of right3] {${\strut}\beta_{m-1}$}
   edge [pre] node[xshift=2.5mm] {$t^\prime$} (right3);
\node[rectangle] (bottom) [below=of right4,xshift=-10mm] {${\strut}\beta_m$}
   edge [pre,dashed] node[xshift=-2mm,yshift=-2mm] {$t^\prime$} (left4)
   edge [pre,dashed] node[xshift=1.5mm,yshift=-2mm] {$s^\prime$} (right4);
\end{tikzpicture}
}%
\caption{}
\label{fig:main5}
\end{minipage}%
\begin{minipage}{0.33\textwidth}
\centering
{%
\begin{tikzpicture}
[node distance=0.4cm,%
pre/.style={<-,shorten <=1pt,>=angle 45},%
post/.style={->,shorten >=1pt,>=angle 45}];%
rectangle/.style={inner sep=0pt,minimum size=5mm}];%
\node[rectangle] (top) {${\strut}\alpha_0$};
\node[rectangle] (left1) [below=of top,xshift=-10mm] {${\strut}\alpha_1$}
   edge [pre,dashed] node[xshift=-2mm,yshift=2mm] {$s$} (top);
\node[rectangle] (right1) [below=of top,xshift=12mm] {${\strut}\beta_1$}  
   edge [pre,dashed] node[xshift=2mm,yshift=2mm] {$t$} (top);
\node[rectangle] (left2) [below=of left1] {${\strut}\alpha_2$}
   edge [pre] node[xshift=-2.5mm] {$t$} (left1);
\node[rectangle] (right2) [below=of right1] {${\strut}\beta_2$}
   edge [pre] node[xshift=2.5mm] {$s$} (right1);
\node[rectangle] (leftmid) [below=of left2,yshift=8mm] {${\strut}\vdots$};
\node[rectangle] (left3) [below=of leftmid,yshift=8mm] {${\strut}\alpha_{m-2}$};
\node[rectangle] (rightmid) [below=of right2,yshift=8mm] {${\strut}\vdots$};
\node[rectangle] (right3) [below=of rightmid,yshift=8mm] {${\strut}\beta_{m-2}$};
\node[rectangle] (left4) [below=of left3] {${\strut}\alpha_{m-1}$}
   edge [pre] node[xshift=-2.5mm] {$s^\prime$} (left3);
\node[rectangle] (right4) [below=of right3] {${\strut}\beta_{m-1}$}
   edge [pre] node[xshift=2.5mm] {$t^\prime$} (right3);
\node[rectangle] (bottom) [below=of right4,xshift=-10mm] {${\strut}\beta_m$}
   edge [pre,dashed] node[xshift=-2mm,yshift=-2mm] {$t^\prime$} (left4)
   edge [pre,dashed] node[xshift=1.5mm,yshift=-2mm] {$s^\prime$} (right4);
\end{tikzpicture}
}%
\caption{}
\label{fig:main6}
\end{minipage}%
\end{figure}
\begin{figure}[ht]
\begin{minipage}{0.15\textwidth}
\qquad
\end{minipage}%
\begin{minipage}{0.35\textwidth}
\centering
{\begin{tikzpicture}%
[node distance=1.2cm,%
pre/.style={<-,shorten <=1pt,>=angle 45},%
post/.style={->,shorten >=1pt,>=angle 45}];%
rectangle/.style={inner sep=0pt,minimum size=5mm}];%
\node[rectangle] (left) {$\alpha_0$};%
\node[rectangle] (right) [right=of left] {$\beta_1$}%
edge [pre,bend right=30] node[yshift=5pt] {$s$} (left)%
edge [pre,bend left=30] node[yshift=-6pt] {$t$} (left);%
\end{tikzpicture}}
\caption{}
\label{fig:main7}
\end{minipage}%
\begin{minipage}{0.35\textwidth}
\centering
{\begin{tikzpicture}%
[node distance=1.2cm,%
pre/.style={<-,shorten <=1pt,>=angle 45},%
post/.style={->,shorten >=1pt,>=angle 45}];%
rectangle/.style={inner sep=0pt,minimum size=5mm}];%
\node[rectangle] (left) {$\alpha_0$};%
\node[rectangle] (right) [right=of left] {$\beta_1$}%
edge [pre,dashed,bend right=30] node[yshift=5pt] {$s$} (left)%
edge [pre,dashed,bend left=30] node[yshift=-6pt] {$t$} (left);%
\end{tikzpicture}}
\caption{}
\label{fig:main8}
\end{minipage}%
\end{figure}


\begin{theorem}
\label{theorem:main}
Let $(W,S)$ be a Coxeter system.
The following are equivalent.
\begin{enumerate}[{\upshape(a)}]
\item
$\Gamma$ is a $W$-digraph.
\item
$\Gamma$ is an $S$-labeled digraph such that
for all $s, t \in S$
with $1 < n = n(s,t)<\infinity$, 
each connected component of $\Gamma_J$, 
$J = \set{s,t}$, 
is isomorphic to one of
the $J$-labeled digraphs in 
Figures~\ref{fig:main1}--\ref{fig:main8}, with
\begin{enumerate}[{\upshape(i)}]
\item
$m \ge 2$ and $m$ a divisor of $n$ in  
 Figure~\ref{fig:main1}, 
Figure~\ref{fig:main2}, or Figure~\ref{fig:main3}, 
\item
$m\ge 2$ and $2m-1$ a divisor of $n$
in Figure~\ref{fig:main4} or
Figure~\ref{fig:main5},
\item
 $m\ge 2$ and $2m-2$ a divisor of $n$
 in Figure~\ref{fig:main6}, 
\item
$m = 1$ and $n \ge 2$ arbitrary 
in Figure~\ref{fig:main7} or Figure~\ref{fig:main8}.
\end{enumerate}
\end{enumerate}
\end{theorem}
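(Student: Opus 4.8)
The plan is to reduce the whole statement to a single connected dihedral subdigraph and then to study one operator, the product $\tau_s\tau_t$. By the remarks preceding the theorem, $\Gamma$ is a $W$-digraph if and only if each $\Gamma_J$ with $\abs{J}\le 2$ is a $W_J$-digraph, and if and only if each connected component of $\Gamma$ is; so I may fix $J=\set{s,t}$ with $1<n=n(s,t)<\infinity$ and assume $\Gamma_J$ connected. First I would dispose of the quadratic relation \eqref{eq:quadraticrelation}: the two ends of any $s$-edge span a $\tau_s$-invariant plane on which \eqref{eq:taudefinition} gives $\tau_s$ the characteristic polynomial $(\lambda-u^2)(\lambda+1)$, for both solid and dashed edges, and since every vertex lies on exactly one $s$-edge, $\tau_s$ is the direct sum of these planes; hence $(\tau_s-u^2)(\tau_s+1)=0$ automatically. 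Thus only the braid relation \eqref{eq:dihedralrelation} remains. Finally, the defining axiom of an $S$-labeled digraph makes the underlying multigraph $2$-regular with edges alternating between labels $s$ and $t$, so a connected $\Gamma_J$ is a single alternating cycle on $2m$ vertices ($m=1$ meaning a doubled edge); the task is to decide, for each orientation and each solid/dashed assignment, whether \eqref{eq:dihedralrelation} holds and for which $n$.

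The organizing principle is that $M(\Gamma_J)$ affords $H_J$ exactly when it is a semisimple module for the rank-two algebra defined by the quadratic relations alone and each constituent factors through the length-$n$ braid relation; here I would use the (split semisimple) representation theory of dihedral Hecke algebras, building on Lusztig \cite{lusztigbarop}, whose irreducibles are $1$-dimensional (trivial, sign, and, for $n$ even, the two mixed characters) or $2$-dimensional. Two facts drive everything. First, the vector $\sum_{\alpha\in\vertices(\Gamma_J)}\alpha$ is always a common $u^2$-eigenvector of $\tau_s$ and $\tau_t$ by \eqref{eq:taudefinition}, so the trivial character is always a constituent. Second, on any $2$-dimensional constituent the eigenvalues of $\tau_s\tau_t$ have product $u^4$, hence equal $u^2\zeta$ and $u^2\zeta^{-1}$; such a constituent factors through $H_J$ precisely when $\zeta$ is an $n$-th root of unity, and a root of unity of order $d$ contributes the divisibility $d\mid n$.

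For the implication (b)$\Rightarrow$(a) I would compute the spectrum of $\tau_s\tau_t$ figure by figure. In Figures~\ref{fig:main7}--\ref{fig:main8} one has $\tau_s=\tau_t$ on the plane, so \eqref{eq:dihedralrelation} holds identically for every $n$. In Figures~\ref{fig:main1}--\ref{fig:main6} I would decompose $M(\Gamma_J)$ and check that the ratios $\zeta$ occurring are roots of unity whose orders divide $m$ in Figures~\ref{fig:main1}--\ref{fig:main3}, divide $2m-1$ in Figures~\ref{fig:main4}--\ref{fig:main5}, and divide $2m-2$ in Figure~\ref{fig:main6}, with the stated maximal order actually attained; the divisibility conditions of (i)--(iii) follow at once. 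Here the solid/dashed choices at the four corner edges incident to the unique source and sink are exactly what shifts the relevant order among $m$, $2m-1$, $2m-2$, while the interior edges, being solid, produce the basic alternating pattern; the $s\leftrightarrow t$ symmetry and the source/sink reflection account for the pairing of the figures.

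For the converse (a)$\Rightarrow$(b), semisimplicity of $M(\Gamma_J)$ forces every ratio $\zeta$ to be a root of unity, and the heart of the argument is to show that this already pins down the orientation and the solid/dashed pattern. Any orientation other than one source and one sink joined by two directed paths produces a constituent whose $\tau_s\tau_t$-trace is not of the form $u^2(\zeta+\zeta^{-1})$ for a constant $\zeta$ (for instance a consistently directed cycle yields the forbidden ratios $\zeta=\pm u^2$), and a solid-versus-dashed mismatch across an edge produces a plane with coinciding $u^2$-eigenlines but distinct $(-1)$-eigenlines, hence a non-semisimple constituent, which is impossible over the semisimple $H_J$. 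Once the orientation and the corner types are constrained to the listed ones, the order of $\zeta$ reads off the correct divisibility and identifies $\Gamma_J$ with the appropriate figure. I expect this converse case analysis to be the main obstacle: systematically excluding every non-listed orientation and every non-listed solid/dashed pattern, and confirming that the survivors realize precisely the orders $m$, $2m-1$, and $2m-2$. Rather than enumerate the $4^{2m}$ configurations directly, I would try to control them through $\tr(\tau_s\tau_t)$ together with the semisimplicity criterion, reducing the work to a small number of local constraints at each vertex.
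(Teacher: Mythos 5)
Your reduction to a single connected component of $\Gamma_J$, the observation that the quadratic relation \eqref{eq:quadraticrelation} is automatic, and the identification of a connected $\Gamma_J$ as an alternating $2m$-cycle all match the paper. For (b)$\Rightarrow$(a) you diverge: the paper does not analyze the spectrum of $\tau_s\tau_t$ at all, but instead realizes each of Figures~\ref{fig:main1}--\ref{fig:main8} explicitly as a basis of a left ideal of $H_J$ built from Lusztig's elements $\widetilde{\varphi}_j,\widetilde{\eta}_j,\widetilde{\gamma}_j,\widetilde{\delta}_j$ (Lemmas~\ref{lemma:dihedral} and \ref{lemma:etagammadelta}), which yields the braid relation with no semisimplicity or eigenvalue computation. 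Your spectral plan could in principle work, but as written it is only a plan: you would still have to compute the characteristic polynomial of $\tau_s\tau_t$ on a $2m$-dimensional space for each figure and, separately, establish semisimplicity (the eigenvalues alone do not give the braid relation), and none of that is carried out.

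The genuine gap is in (a)$\Rightarrow$(b), which you yourself flag as the main obstacle. Two of the specific mechanisms you propose do not work. First, the claim that a ``solid-versus-dashed mismatch across an edge'' produces a non-semisimple constituent has no content: the two endpoints of an $s$-edge span a plane invariant under $\tau_s$ but not under $\tau_t$, so there is no constituent to examine, and in fact none of the sixteen local configurations of a pair of adjacent edges can be excluded locally (all sixteen are tabulated in Table~\ref{tab:traces} and contribute to the trace); the restriction that dashed edges occur only out of the source or into the sink is a global fact. Second, the assertion that a bad orientation ``yields the forbidden ratios $\zeta=\pm u^2$'' is unsubstantiated. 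What the paper actually does is: (1) count eigenvalues of $T_s$ and $T_t$ to get $m_1=m_2$ for the multiplicities of $\ind$ and $\sgn$, use Lemma~\ref{lemma:eigenvalues}(i) to get $m_1=1$, and then use the resulting $\sgn$-eigenvector via Lemma~\ref{lemma:eigenvalues}(ii) to force the product of the edge-ratios around the cycle to equal $1$, balancing forward against backward edges of each type; (2) show from Table~\ref{tab:traces} that the constant term of $\tr(T_sT_t)$ equals the number of sinks and equals $m_2=1$, so there is a unique sink, located opposite the unique source; (3) show the coefficient of $u^3$ in $\tr(T_sT_t)$ is $N_1-N_2$ and must vanish, which pins the dashed edges to the source and sink; (4) specialize $u=1$ to a genuine representation of the dihedral group and read off the divisibility of $n$ by the order of $A_{st}$ from Table~\ref{tab:polys}. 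You have the right tool---the trace of $\tau_s\tau_t$ and the semisimple representation theory of $H_J$---but not the arguments that make it bite; without steps (1)--(3) or a substitute, the case analysis over the $4^{2m}$ configurations is not actually reduced to anything.
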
 


If $\Gamma$ is an $S$-labeled digraph, 
let $\Gammarev$ be the $S$-labled digraph obtained
by reversing the direction of all edges of $\Gamma$
while keeping their types and labels. 
For example, 
if $\Gamma$ is as in Figure~\ref{fig:main4}, then 
$\Gammarev$ is isomorphic to the digraph in Figure~\ref{fig:main5}. 
We can assume $M(\Gamma)$ and $M(\Gammarev)$ are the same
as  vector spaces over $\rationals(u)$ (since
$\vertices(\Gamma) = \vertices(\Gammarev)$), but the
endomorphisms of $M(\Gamma)$ and $M(\Gammarev)$ 
corresponding to an element of $S$ are different. 

\begin{corollary}
\label{corollary:reversed}
If $\Gamma$ is an $S$-labeled digraph, then 
$\Gamma$ is a $W$-digraph if and only if
$\Gammarev$ is a $W$-digraph.
\end{corollary}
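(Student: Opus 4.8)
The plan is to deduce the corollary directly from the characterization in Theorem~\ref{theorem:main}, by showing that the class of $\set{s,t}$-labeled digraphs appearing in Figures~\ref{fig:main1}--\ref{fig:main8} is closed under reversal and that reversal is compatible with the two reductions used there. First I would record two elementary compatibilities. Reversal commutes with restriction to a parabolic: since forming $\Gamma_J$ deletes edges according to their labels while reversal flips directions but preserves labels and types, one has $(\Gamma_J)_{\rev} = (\Gammarev)_J$ for every $J \subseteq S$. Reversal also commutes with passage to connected components, since reversing edge directions does not change the underlying undirected graph; thus $\Gamma$ and $\Gammarev$ induce the same partition of the vertices into components, and the reversal of a component of $\Gamma$ is the corresponding component of $\Gammarev$. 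Finally, reversal is compatible with isomorphism: an isomorphism $\Gamma \to \Gamma'$ is simultaneously an isomorphism $\Gammarev \to \Gamma'_{\rev}$.

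Granting these, the corollary reduces to a single assertion: \emph{if $\Gamma_0$ is one of the $\set{s,t}$-digraphs listed in Theorem~\ref{theorem:main}(b) (with its parameter constraints), then $(\Gamma_0)_{\rev}$ is again such a digraph.} Indeed, by Theorem~\ref{theorem:main}, $\Gamma$ is a $W$-digraph if and only if for every pair $s,t \in S$ with $1 < n(s,t) < \infinity$ each component of $\Gamma_{\set{s,t}}$ is isomorphic to one of the listed figures; applying the compatibilities above to $(\Gammarev)_{\set{s,t}} = (\Gamma_{\set{s,t}})_{\rev}$ and to its components, the very same criterion holds for $\Gammarev$ exactly when it holds for $\Gamma$.

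To verify the closure assertion I would read off the directed structure of each figure. In Figures~\ref{fig:main1}--\ref{fig:main6} the vertex $\alpha_0$ is a source and $\beta_m$ is a sink, joined by two directed paths of length $m$ with alternating labels (the $\alpha$-path beginning with $s$, the $\beta$-path with $t$); the only variation among these six figures is whether the two edges at the source and the two edges at the sink are solid or dashed. Reversal interchanges source and sink and reverses both paths, so it preserves the ``two alternating paths of length $m$'' shape, preserves $m$ (hence the divisibility conditions $m \mid n$, $2m-1 \mid n$, $2m-2 \mid n$), and merely carries dashed edges at the source end to the sink end and conversely. Thus Figure~\ref{fig:main1} (all solid) reverses to Figure~\ref{fig:main1}; Figure~\ref{fig:main6} (all four end-edges dashed) reverses to Figure~\ref{fig:main6}; Figure~\ref{fig:main4} (both source-edges dashed) reverses to Figure~\ref{fig:main5} (both sink-edges dashed) and conversely, as already noted before the statement; and Figures~\ref{fig:main2} and~\ref{fig:main3}, whose two dashed edges sit at opposite ends, reverse into that same pair. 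The degenerate cases $m=1$ of Figures~\ref{fig:main7}--\ref{fig:main8} (a doubled solid, respectively doubled dashed, edge) are visibly self-reversing up to isomorphism. I expect the only genuine bookkeeping---hence the main obstacle---to be matching a reversed figure to the correctly \emph{labeled} member of the list: because $s'$ and $t'$ depend on the parity of $m$ and an isomorphism must respect the labels $s,t$, one must check that this parity determines whether a reversed Figure~\ref{fig:main2}, say, is isomorphic to Figure~\ref{fig:main2} or to Figure~\ref{fig:main3}. Since reversal preserves both the number and the type of the dashed edges, such a reversed figure has exactly two dashed edges at opposite ends, so it cannot escape the pair $\set{\text{Figure~\ref{fig:main2}}, \text{Figure~\ref{fig:main3}}}$; once this routine label-tracking is done and the divisibility parameters are seen to be unchanged, the closure assertion holds and the corollary follows.
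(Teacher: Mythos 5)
Your proposal is correct and follows essentially the same route as the paper: the paper's proof likewise invokes Theorem~\ref{theorem:main}, observes that the family of admissible $\set{s,t}$-components (with their divisibility constraints) is invariant under reversal, and uses the compatibility of reversal with restriction to $\Gamma_J$ and with connected components. The only difference is that you spell out the figure-by-figure matching that the paper dismisses as ``easily seen,'' which is fine since the divisibility conditions are shared within each reversal-closed pair.
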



For $\Gamma$ an $S$-labeled digraph, let 
$\Gammaarrow$ be the $S$-labeled digraph 
obtained from $\Gamma$ by replacing any
dashed edge \!\dashededge{$\alpha$}{$\beta$}{$s$}\! 
by the corresponding solid edge 
 \!\solidedge{$\alpha$}{$\beta$}{$s$}\!.
 Let $\Gammadir$ be the directed multigraph obtained
 by removing all labels from $\Gammaarrow$.
Let $\Gammaundir$ be the (undirected) multigraph
obtained from $\Gammadir$ by replacing each
directed edge \!\solidedge{$\alpha$}{$\beta$}{}\! by an
undirected edge $\alpha$ ----- $\beta$.
For example, with $\Gamma$ 
as in
Figure~\ref{fig:ex2},  the
\begin{figure}[ht]
\centering
\begin{minipage}{0.32\textwidth}
\centering
\begin{tikzpicture}
[node distance=0.65cm,%
pre/.style={<-,shorten <=1pt,>=angle 45},%
post/.style={->,shorten >=1pt,>=angle 45}];%
rectangle/.style={inner sep=0pt,minimum size=5mm}];%
\node[rectangle] (left)									{$\gamma_1$};
\node[rectangle] (top1)	[above=of left,xshift=10mm,yshift=-2mm]	{$\gamma_2$}
	edge		[pre]		node[yshift=2mm,xshift=-2mm]	{$s$}		(left);
\node[rectangle] (top2)	[right=of top1]						{$\gamma_3$}
	edge		[pre]			node[yshift=2mm]				{$t$}		(top1);
\node[rectangle] (bottom1)	[below=of left,xshift=10mm,yshift=2mm]	{$\gamma_6$}
	edge		[pre]		node[yshift=-2mm,xshift=-2mm]			{$t$}		(left);
\node[rectangle] (bottom2)	[right=of bottom1]				{$\gamma_5$}
	edge		[pre]			node[yshift=-2mm]				{$s$}		(bottom1);
\node[rectangle] (right) [below=of top2,xshift=10mm,yshift=2.5mm]	{$\gamma_4$}
	edge		[pre]		node[xshift=2mm,yshift=2mm]			{$s$}		(top2)
	edge		[pre]		node[xshift=2mm,yshift=-2mm]		{$t$}		(bottom2);
\end{tikzpicture}
\end{minipage}%
\begin{minipage}{0.32\textwidth}%
\centering
\begin{tikzpicture}
[node distance=0.65cm,%
pre/.style={<-,shorten <=1pt,>=angle 45},%
post/.style={->,shorten >=1pt,>=angle 45}];%
rectangle/.style={inner sep=0pt,minimum size=5mm}];%
\node[rectangle]	(left)							{$\gamma_1$};
\node[rectangle]	(top1)	[above=of left,xshift=10mm,yshift=-2mm]	{$\gamma_2$}
	edge		[pre]			(left);
\node[rectangle]	(top2)	[right=of top1]			{$\gamma_3$}
	edge		[pre]			(top1);
\node[rectangle]	(bottom1)	[below=of left,xshift=10mm,yshift=2mm]	{$\gamma_6$}
	edge		[pre]			(left);
\node[rectangle]	(bottom2)	[right=of bottom1]			{$\gamma_5$}
	edge		[pre]		(bottom1);
\node[rectangle]	(right)	[below=of top2,xshift=10mm,yshift=2.5mm]	{$\gamma_4$}
	edge		[pre]	(top2)
	edge		[pre]		(bottom2);
\end{tikzpicture}
\end{minipage}%
\begin{minipage}{0.32\textwidth}%
\begin{tikzpicture}
[node distance=0.65cm,%
rectangle/.style={thick,inner sep=0pt,minimum size=5mm}];
\node[rectangle]	(left)							{$\gamma_1$};
\node[rectangle]	(top1)	[above=of left,xshift=10mm,yshift=-2mm]	{$\gamma_2$}
	edge [thick]				(left);
\node[rectangle]	(top2)	[right=of top1]			{$\gamma_3$}
	edge	 [thick]		(top1);
\node[rectangle]	(bottom1)	[below=of left,xshift=10mm,yshift=2mm]	{$\gamma_6$}
	edge	 [thick]		(left);
\node[rectangle]	(bottom2)	[right=of bottom1]			{$\gamma_5$}
	edge	 [thick]		(bottom1);
\node[rectangle]	(right)	[below=of top2,xshift=10mm,yshift=2.5mm]	{$\gamma_4$}
	edge	 [thick]		(top2)
	edge	 [thick]		(bottom2);
\end{tikzpicture}
\end{minipage}%
\caption{$\Gammaarrow$, $\Gammadir$, 
and $\Gammaundir$ for $\Gamma$ as in Figure~\ref{fig:ex2}}
\label{fig:dirundir}
\end{figure}
associated graphs $\Gammaarrow$, 
$\Gammadir$, and $\Gammaundir$ 
are given in Figure~\ref{fig:dirundir}.
We say a vertex $\alpha$ of $\Gamma$ is
a {\it source} ({\it sink}) of $\Gamma$ if 
$\alpha$ is a source (sink, respectively)
in $\Gammadir$.  
We consider an empty path to be a directed circuit
 in any directed multigraph, and define
  $\Gamma$ to be {\it acyclic} if
$\Gammadir$ is acyclic, that is, if there is no nonempty
directed circuit in $\Gammadir$.  
Also, $\Gamma$ is {\it connected} if
$\Gammaundir$ is connected.  


\begin{theorem}
\label{theorem:acyclic}
If $n(s,t) < \infinity$ for all $s, t \in S$ and
$\Gamma$ is a connected $W$-digraph, then the
following hold.
\begin{enumerate}[{\upshape(i)}]
\item
$\Gamma$ can have at most one source and
at most one sink.
\item
If $\Gamma$ has either a source or a sink, then
$\Gamma$ is acyclic.
\item
If $(W,S)$ is finite, then $\Gamma$ has both
a source and a sink, and so is acyclic. 
\end{enumerate}
\end{theorem}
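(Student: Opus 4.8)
My plan is to push everything through the dihedral classification of Theorem~\ref{theorem:main} and then promote the resulting local information to global information using connectedness. First I would record what Theorem~\ref{theorem:main} says after passing to $\Gammadir$. Since $n(s,t)<\infty$ for all $s,t$, every component of each $\Gamma_{\set{s,t}}$ is one of Figures~\ref{fig:main1}--\ref{fig:main8}; and because $\Gammadir$ remembers only edge \emph{directions} (dashed and solid being merged), the six families in Figures~\ref{fig:main1}--\ref{fig:main6} all have the \emph{same} underlying directed graph, namely two directed paths of equal length $m$ from a vertex $\alpha_0$ to a vertex $\beta_m$, while Figures~\ref{fig:main7}--\ref{fig:main8} give a directed digon $\alpha_0\rightrightarrows\beta_1$. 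From this I extract the local facts $(\ast)$: each $\set{s,t}$-component is acyclic, has exactly one vertex whose two $\set{s,t}$-edges both point out (a local source) and one whose two $\set{s,t}$-edges both point in (a local sink), and \emph{both} directed paths from the local source to the local sink have the same length. This equal-length clause is the quantitative heart of the argument.

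Next I would reduce parts (i) and (ii) to a single assertion, the \emph{Crux}: if $\alpha_*$ is a source of the connected $W$-digraph $\Gamma$, then every vertex is reachable from $\alpha_*$ by a directed path in $\Gammadir$, and any two directed paths from $\alpha_*$ to a given $v$ have equal length. Granting the Crux, I set $h(v)$ to be this common length; then $h(\beta)=h(\alpha)+1$ along every edge $\alpha\to\beta$, so $h$ strictly increases along edges and no nonempty directed circuit can exist, giving the acyclicity in (ii). For (i), a second source $\alpha'$ would satisfy $h(\alpha')\ge 1$, so the last edge of a path $\alpha_*\to\cdots\to\alpha'$ enters $\alpha'$, contradicting that $\alpha'$ has no incoming edge. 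The sink statements follow by applying the source statements to $\Gammarev$ via Corollary~\ref{corollary:reversed}, since a sink of $\Gamma$ is a source of $\Gammarev$ and reversing all edges preserves acyclicity.

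For part (iii) I would produce a source and a sink directly by specializing the representation at $u=0$. At $u=0$ every formula in \eqref{eq:taudefinition} sends a basis vector to $\pm$ a single basis vector, and the underlying set map $f_s\colon\vertices(\Gamma)\to\vertices(\Gamma)$ sends $\alpha$ to the head of its $s$-edge when that edge points out of $\alpha$ and fixes $\alpha$ when it points in. The braid relations for the $\tau_s$ force the $f_s$ to be idempotent and to satisfy the braid relations, so they generate a Demazure ($0$-Hecke) action of $W$. When $W$ is finite I form $f_{w_0}$ for the longest element $w_0$; since $\ell(w_0s)<\ell(w_0)$ one has $f_{w_0}\circ f_s=f_{w_0}$ for every $s$, whence $f_{w_0}(v)$ is fixed by every $f_s$, i.e.\ is a \emph{sink}, and $f_{w_0}(a)=f_{w_0}(f_s(a))=f_{w_0}(b)$ across each edge $a\xrightarrow{s}b$, so $f_{w_0}$ is \emph{constant} on the connected graph $\Gamma$; its single value is a sink. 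Running the same argument on $\Gammarev$ (Corollary~\ref{corollary:reversed}) yields a sink of $\Gammarev$, that is, a source of $\Gamma$. Part (ii) then delivers acyclicity, completing (iii).

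The main obstacle is the Crux, and specifically its equal-length clause. Local consistency is exactly $(\ast)$: the $1$-cochain assigning $+1$ to every directed edge of $\Gammadir$ integrates to $0$ around each dihedral cycle, because the two branches of every Figure~\ref{fig:main1}--\ref{fig:main8} have equal length. To obtain a globally well-defined $h$ I must show this cochain is exact, i.e.\ integrates to $0$ around \emph{every} closed walk of $\Gammaundir$, and that the forward-reachable set of $\alpha_*$ is all of $\vertices(\Gamma)$. I expect to prove this by a discrete homotopy argument: fill each dihedral cycle with a $2$-cell, show the resulting complex is simply connected, and induct on $h$ with $\alpha_*$ as basepoint, each elementary homotopy across a $2$-cell preserving path length by the equal-branch property. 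The source (and, in (iii), finiteness of $W$) is essential and cannot be dropped: Example~\ref{example:affinea2} gives a connected $W$-digraph with all $n(s,t)<\infty$ that is \emph{cyclic} and has neither source nor sink, so without the hypotheses the cochain is genuinely non-exact and no such $h$ exists.
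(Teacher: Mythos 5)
Your overall architecture coincides with the paper's: local dihedral structure from Theorem~\ref{theorem:main}, a path-equivalence/reachability statement from a source, a height function giving (i) and (ii), the $0$-Hecke (Demazure) specialization at $u=0$ giving a sink when $(W,S)$ is finite, and $\Gammarev$ together with Corollary~\ref{corollary:reversed} to pass between sources and sinks. Your part (iii) is essentially the paper's Lemmas~\ref{lemma:h0paths} and \ref{lemma:h0sinks} verbatim, and your derivations of (i) and (ii) \emph{from} the Crux are sound. The problem is that the Crux itself --- which you correctly identify as the heart of the matter --- is not proved. You propose to fill each dihedral cycle with a $2$-cell, ``show the resulting complex is simply connected,'' and then integrate the length cochain. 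But you give no argument for simple connectivity, no indication of how the existence of a source would enter such an argument (it must enter somewhere: Example~\ref{example:affinea2} is connected with all $n(s,t)$ finite and carries directed circuits, so the cochain there is not exact), and no proof of the reachability clause $\vertices(\Gamma)=\closedray{\alpha_*}$, which your uniqueness-of-source argument genuinely needs (a height function alone does not preclude two sources at equal height).

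The paper's Lemma~\ref{lemma:locallyfinite} shows why this step cannot be waved through. It does \emph{not} prove that all directed paths between a given pair of vertices are equivalent --- Example~\ref{example:inequiv} shows this is false --- but only that directed paths \emph{starting at the source} are equivalent, and it obtains this by a simultaneous induction on $\mu(\sigma,\alpha)$ that interleaves two statements: (a) equivalence of source-based paths, and (b) the ``no back-door'' property that if $\alpha$ is reachable from $\sigma$ and from $\zeta$, then $\zeta$ is reachable from $\sigma$. Statement (b) is what makes the local source $\tau$ of the relevant dihedral component itself reachable from $\sigma$ in the inductive step for (a), and is also what drives the proof that connectedness forces $\vertices(\Gamma)=\closedray{\sigma}$. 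Your sketch contains neither (b) nor a mechanism for converting a free homotopy of closed walks in the filled complex (which permits backtracking and partial branch swaps) into a chain of directed-path branch swaps; supplying these would essentially amount to reconstructing the paper's double induction. As written, the proposal has a genuine gap at its central lemma.
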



\begin{corollary}
\label{corollary:numsourcessinks}
If $(W,S)$ is finite, then any $W$-digraph is acyclic.
Further, 
the number of sources (or sinks) in a finite $W$-digraph 
is equal to
the number of its connected components. 
\end{corollary}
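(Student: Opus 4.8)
The plan is to reduce everything to the connected case settled by Theorem~\ref{theorem:acyclic} and then reassemble the components. First I would observe that when $(W,S)$ is finite, every product $st$ with $s,t\in S$ has finite order in $W$, so $n(s,t)<\infty$ for all $s,t\in S$; hence the standing hypothesis of Theorem~\ref{theorem:acyclic} holds automatically. Since $\Gamma$ is a $W$-digraph if and only if each of its connected components is a $W$-digraph (as noted after Definition~\ref{definition:wdigraph}), each connected component of $\Gamma$ is itself a connected $W$-digraph to which Theorem~\ref{theorem:acyclic} applies. Throughout, I would keep in mind that the connected components of $\Gamma$ are by definition the connected components of $\Gammaundir$, and that every edge of $\Gamma$ (solid or dashed, in either direction) contributes an edge of $\Gammaundir$, so two vertices lie in the same component of $\Gamma$ exactly when they lie in the same component of $\Gammaundir$.

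For the acyclicity claim I would argue that any nonempty directed circuit in $\Gammadir$ is, after forgetting orientations, a connected subgraph of $\Gammaundir$, hence is contained in a single component. Consequently $\Gamma$ is acyclic precisely when each of its connected components is acyclic. By Theorem~\ref{theorem:acyclic}(iii) each component, being a finite connected $W$-digraph, possesses both a source and a sink and is therefore acyclic; assembling over the components shows $\Gamma$ is acyclic.

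For the count of sources I would again work component by component. Theorem~\ref{theorem:acyclic}(iii) guarantees that each component has at least one source, while Theorem~\ref{theorem:acyclic}(i) guarantees at most one; thus each component has exactly one source, and likewise exactly one sink. Because $\Gammadir$ has no edges joining distinct components, a vertex has in-degree zero in $\Gammadir$ if and only if it has in-degree zero within the subgraph spanned by its own component, so a vertex is a source of $\Gamma$ if and only if it is the unique source of the component containing it. Hence the sources of $\Gamma$ are in bijection with the connected components, and the same argument applied to in-degree replaced by out-degree handles the sinks. This argument is essentially a bookkeeping reduction to Theorem~\ref{theorem:acyclic}, so I anticipate no serious obstacle; the only point requiring care is that \emph{source}, \emph{sink}, and \emph{acyclic} are defined through the underlying directed multigraph $\Gammadir$ (with labels and the solid/dashed distinction discarded), and one must confirm that each of these notions is local to the connected components of $\Gammaundir$, which is exactly the observation that neither a source/sink condition nor a directed circuit can involve edges between distinct components.
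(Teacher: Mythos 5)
Your argument is correct and is exactly the intended deduction: the paper states this corollary without proof as an immediate consequence of Theorem~\ref{theorem:acyclic}, relying on the same componentwise reduction you spell out (each component is a connected $W$-digraph, parts (i) and (iii) give exactly one source and one sink per component, and sources, sinks, and directed circuits are local to components of $\Gammaundir$). The only cosmetic remark is that Theorem~\ref{theorem:acyclic}(iii) needs $(W,S)$ finite but not $\Gamma$ finite, so your parenthetical ``being a finite connected $W$-digraph'' is unnecessary for the acyclicity claim.
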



Let $\ind$ and $\sgn$ be the linear characters of $H$ determined
by $\ind(T_w) = u_w = u^{2 \ell(w)}$ and
$\sgn(T_w) = \varepsilon_w = (-1)^{\ell(w)}$ for $w \in W$, respectively.
For $\lambda$ a linear character of $H$ and $M$ an $H$-module, 
put $M_\lambda = \setof{v \in M}{h v = \lambda(h) v \text{ for } h \in H}$.

\begin{theorem}
\label{theorem:linearcharmults}
If $\Gamma$ is a $W$-digraph and $\vertices(\Gamma)$ is finite,
then the following hold.
\begin{enumerate}[{\upshape(i)}]
\item
The number of connected components of $\Gamma$ is equal to 
$\dim M(\Gamma)_{\ind}$.
\item
If $n(s,t) < \infinity$ for all $s, t \in S$, then
the number of acyclic connected components of $\Gamma$ is equal
to $\dim M(\Gamma)_{\sgn}$.
\end{enumerate}
\end{theorem}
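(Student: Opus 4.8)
The plan is to diagonalise each $\tau_s=\rho(T_s)$ on the two–dimensional $\tau_s$-invariant subspace spanned by the two endpoints of each edge labelled $s$. Because the $T_s$ generate $H$ and $\ind,\sgn$ are algebra homomorphisms, a vector $v=\sum_\gamma c_\gamma\gamma$ lies in $M(\Gamma)_{\ind}$ (resp.\ $M(\Gamma)_{\sgn}$) exactly when $\tau_s v=u^2v$ (resp.\ $\tau_s v=-v$) for every $s$, i.e.\ when the component of $v$ in each edge–block is a $u^2$- (resp.\ $(-1)$-) eigenvector. Each block has characteristic polynomial $(\lambda-u^2)(\lambda+1)$ by \eqref{eq:quadraticrelation}, and a direct computation from \eqref{eq:taudefinition} shows that on an edge joining $\alpha$ and $\beta$ the $u^2$-eigenline is spanned by $\alpha+\beta$, \emph{independently of the edge's type or orientation}. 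This settles (i): $v\in M(\Gamma)_{\ind}$ iff $c_\alpha=c_\beta$ for every edge, i.e.\ iff $\gamma\mapsto c_\gamma$ is constant on each connected component, so $\dim M(\Gamma)_{\ind}$ equals the number of components (only finiteness of $\vertices(\Gamma)$ is used here).

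For (ii) the same computation gives the $(-1)$-eigenline of each block: on a solid edge \solidedge{$\alpha$}{$\beta$}{$s$} the condition is $c_\alpha=-u^2c_\beta$, and on a dashed edge \dashededge{$\alpha$}{$\beta$}{$s$} it is $(u+1)c_\alpha+(u^2-u)c_\beta=0$; both relations have nonzero coefficients. Since $\tau_s$ stabilises the span $M_C$ of the vertices of each connected component $C$, we have $M(\Gamma)=\bigoplus_C M_C$ as $H$-modules, hence $\dim M(\Gamma)_{\sgn}=\sum_C\dim (M_C)_{\sgn}$; so it suffices to prove $\dim (M_C)_{\sgn}$ is $1$ when the finite connected digraph $C$ is acyclic and $0$ otherwise. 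Connectivity together with the invertibility of the edge relations propagates the value of $c$ from any one vertex to all others, whence $\dim (M_C)_{\sgn}\le 1$.

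Orient an edge $\alpha\to\beta$ and write $c_\beta=w(e)c_\alpha$, so $w(e)=-u^{-2}$ on a solid edge and $w(e)=(u+1)/(u-u^2)$ on a dashed edge. Let $\nu:\rationals(u)^\times\to\integers$ be the degree valuation, $\nu(f/g)=\deg f-\deg g$; then $\nu(w(e))=-2$ for solid and $-1$ for dashed edges, so $\nu(w(e))<0$ in all cases. If $C$ contains a directed cycle $v_1\to\cdots\to v_k\to v_1$, composing the relations yields $c_{v_1}=\bigl(\prod_i w(e_i)\bigr)c_{v_1}$ with $\nu\bigl(\prod_i w(e_i)\bigr)=\sum_i\nu(w(e_i))<0$, so the product is $\ne 1$; therefore $c_{v_1}=0$ and, by connectivity, $v=0$. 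Thus a non-acyclic component contributes $0$, which (with $\dim\le 1$) also gives one inequality of (ii).

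The remaining task — that an acyclic connected finite $C$ carries a \emph{nonzero} common $(-1)$-eigenvector, i.e.\ that the edge relations are consistent around every undirected cycle of $C$ — is the main obstacle. The plan is to reduce this to the fundamental cycles coming from the dihedral subgraphs: a closed edge–walk projects to a word in the generators that is trivial in $W$, and every such relation is a consequence of the braid relations \eqref{eq:dihedralrelation}, so one expects the undirected cycle space of an \emph{acyclic} $C$ to be generated by the cycles lying in the $\set{s,t}$-components. This is exactly where the hypothesis $n(s,t)<\infty$ enters, through Theorem~\ref{theorem:main}, and where acyclicity must be used to exclude the extra, non-dihedral directed cycles that occur in non-acyclic examples such as Example~\ref{example:affinea2}; establishing this generation statement is the crux of the argument. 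Granting it, consistency need only be checked on the finitely many shapes of Figures~\ref{fig:main1}--\ref{fig:main8}, each of which is acyclic (two directed paths from a single source to a single sink, or a directed bigon): around the unique undirected cycle the weights multiply to $1$ — for instance in Figure~\ref{fig:main1} the forward and backward arms contribute $(-u^{-2})^{m}$ and $(-u^{2})^{m}$, whose product is $1$. Consistency on these generators then produces the desired eigenvector and gives $\dim (M_C)_{\sgn}=1$, completing (ii); Theorem~\ref{theorem:acyclic} supplies the unique source and sink used to organise the construction.
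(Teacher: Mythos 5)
Your part (i), your bound $\dim (M_C)_{\sgn}\le 1$, and your vanishing argument for non-acyclic components are all correct and agree in substance with the paper (the paper excludes the cycle product being $1$ by computing it explicitly as $\pm u^{-2(k-j)}(u+1)^{j}(u^2-u)^{-j}$ rather than via a degree valuation, but the two arguments are interchangeable). The problem is the existence half of (ii): you must produce a \emph{nonzero} common $(-1)$-eigenvector on each acyclic connected component, and this you reduce to the assertion that the undirected cycle space of such a component is generated by the cycles lying in the dihedral subdigraphs $C_{\set{s,t}}$ --- an assertion you explicitly label ``the crux of the argument'' and then assume. That assertion is precisely the nontrivial content of this direction of the theorem, so as written the proof has a genuine gap; everything before and after it is routine linear algebra.

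The paper closes this gap by a different, directed-path device rather than a cycle-space one. Lemma~\ref{lemma:locallyfinite} shows that in a connected $W$-digraph with a source $\sigma$ (which a finite acyclic component has), every vertex is reachable from $\sigma$ by a directed path, and any two directed paths from $\sigma$ to the same vertex are equivalent under the relation generated by swapping the two arms of a dihedral component between its unique source and unique sink (these exist by Theorem~\ref{theorem:main}); the proof is a simultaneous induction on the minimal directed distance from $\sigma$. Since the two arms of each dihedral component carry the same product of edge weights, the product $\mu_\alpha$ along any directed path from $\sigma$ to $\alpha$ is well defined, and $\sum_\alpha \mu_\alpha\alpha$ is checked edge-by-edge to span $M(C)_{\sgn}$. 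Note that this route never needs consistency around arbitrary undirected cycles, only around directed paths issuing from the source. To complete your version you would have to either prove your cycle-space generation claim (including its integral, not just mod $2$, form, since the weights are being multiplied) or establish an analogue of Lemma~\ref{lemma:locallyfinite}; neither is a formality.
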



\begin{theorem}
\label{theorem:index}
If $(W,S)$ is finite, $J \subseteq S$, and 
$\Gamma$ is a connected $W$-digraph, then 
$\Gamma_J$ has at most $\abs{W : W_J}$ connected components.
\end{theorem}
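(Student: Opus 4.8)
The plan is to convert the combinatorial quantity ``number of connected components of $\Gamma_J$'' into the dimension of an index-character space, and then to bound that dimension using the fact that a connected $W$-digraph affords a \emph{cyclic} $H$-module. First I would apply Theorem~\ref{theorem:linearcharmults}(i) to the $W_J$-digraph $\Gamma_J$: writing $\ind_J$ for the index character of the Hecke algebra $H_J = \linspan\setof{T_w}{w \in W_J}$ of $(W_J,J)$, the number of connected components of $\Gamma_J$ equals $\dim M(\Gamma_J)_{\ind_J}$, where $M(\Gamma_J) = M(\Gamma)$ as a vector space and $M(\Gamma_J)_{\ind_J} = \setof{v \in M(\Gamma)}{\tau_s v = u^2 v \text{ for all } s \in J}$. (This uses only that $\Gamma_J$ is a $W_J$-digraph with finitely many vertices, a finiteness justified in the next step.) So it suffices to prove $\dim M(\Gamma_J)_{\ind_J} \le \abs{W : W_J}$.

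Next I would show $M(\Gamma)$ is cyclic. Each $\tau_s = \rho(T_s)$ is invertible, since $T_s$ is a unit of $H$ by \eqref{eq:quadraticrelation}. Consequently, whenever $\alpha,\beta \in \vertices(\Gamma)$ are joined by an edge labeled $s$, of either type and either orientation, formula \eqref{eq:taudefinition} expresses $\beta$ as a $\rationals(u)$-linear combination of $\alpha$ and $\tau_s\alpha$, the relevant coefficients $u^2$, $u+1$, $u^2-u$ all being nonzero; hence $\beta \in H\alpha$. Since $\Gammaundir$ is connected, an induction on distance in $\Gammaundir$ gives $M(\Gamma) = H\alpha$ for any fixed vertex $\alpha$. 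In particular $\dim M(\Gamma) \le \dim H = \abs{W}$, so $\Gamma$ is finite (which justifies the appeal to Theorem~\ref{theorem:linearcharmults}(i) above), and $h \mapsto h\alpha$ is a surjection of $H$-modules $H \to M(\Gamma)$.

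Finally I would introduce the symmetrizer $x_J = \sum_{w \in W_J} T_w \in H$. A short computation with \eqref{eq:leftmultbys}, pairing each $w \in W_J$ with $sw$, gives $T_s x_J = u^2 x_J$ and $x_J T_s = u^2 x_J$ for $s \in J$. The needed inclusion is $M(\Gamma_J)_{\ind_J} \subseteq x_J M(\Gamma)$: if $v \in M(\Gamma_J)_{\ind_J}$ then $T_w v = u^{2\ell(w)} v$ for all $w \in W_J$, so $x_J v = \bigl(\sum_{w \in W_J} u^{2\ell(w)}\bigr) v$, a nonzero scalar multiple of $v$, whence $v \in x_J M(\Gamma)$. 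From $x_J T_s = u^2 x_J$ I get $x_J T_v = u^{2\ell(v)} x_J$ for $v \in W_J$. Using the standard decomposition $W = \coprod_{d \in X_J} W_J d$, where $X_J$ is the set of $\abs{W : W_J}$ minimal-length representatives of $W_J \backslash W$ and $\ell(vd) = \ell(v)+\ell(d)$ for $v \in W_J$, $d \in X_J$, I obtain $x_J T_w = u^{2\ell(v)} x_J T_d$ whenever $w = vd$. Writing $M(\Gamma) = H\alpha$, this yields
\[
x_J M(\Gamma) = x_J H \alpha = \linspan\setof{x_J T_d\,\alpha}{d \in X_J},
\]
a span of at most $\abs{W : W_J}$ vectors. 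Therefore $\dim M(\Gamma_J)_{\ind_J} \le \dim x_J M(\Gamma) \le \abs{W : W_J}$, as required.

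I expect the main obstacle to lie in the last paragraph: identifying $M(\Gamma_J)_{\ind_J}$ with the image of the symmetrizer $x_J$ and controlling $x_J H$ through the minimal-length coset representatives, since that is exactly where the index $\abs{W : W_J}$ enters. By contrast, the cyclicity and finiteness of $M(\Gamma)$ are routine once the invertibility of the $\tau_s$ is noted, and the reduction via Theorem~\ref{theorem:linearcharmults}(i) is immediate.
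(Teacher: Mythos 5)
Your proof is correct, but it takes a genuinely different route from the paper's. The paper argues at $u=0$: it uses the $0$-Hecke module $M_0$, notes that each connected component $C_i$ of $\Gamma_J$ has a source $\sigma_i$, writes $\sigma_i = \pm a_{x(i)}\sigma$ for the source $\sigma$ of $\Gamma$ (via Lemma~\ref{lemma:locallyfinite}(iii) and Lemma~\ref{lemma:h0paths}(ii)), and observes that $a_s\sigma_i \ne -\sigma_i$ for $s\in J$ forces $sx(i)>x(i)$, so $i\mapsto x(i)$ is an injection into the set $X_J$ of distinguished coset representatives. You instead convert the component count into $\dim M(\Gamma_J)_{\ind_J}$ via Theorem~\ref{theorem:linearcharmults}(i) and trap that eigenspace inside $x_JM(\Gamma)$ for the parabolic symmetrizer $x_J=\sum_{w\in W_J}T_w$, using cyclicity of $M(\Gamma)$ (which the paper also establishes, inside the proof of Theorem~\ref{theorem:main}) and the length-additive coset decomposition to get $\dim x_JH\alpha\le\abs{X_J}$. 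Your version avoids the $0$-Hecke machinery and the source-existence results (Theorem~\ref{theorem:acyclic}(iii), Lemma~\ref{lemma:h0sinks}), at the cost of invoking Theorem~\ref{theorem:linearcharmults}(i) and standard symmetrizer identities; the paper's version yields slightly sharper information, namely an explicit injection from the set of components of $\Gamma_J$ into $X_J$ tagging each component by the representative reaching its source, rather than only a dimension bound. One cosmetic point: the identity $x_JT_s=u^2x_J$ for $s\in J$ requires the right-multiplication analogue of \eqref{eq:leftmultbys} (pairing $w$ with $ws$), which is standard but not literally the displayed formula; everything else, including the nonvanishing of $\sum_{w\in W_J}u^{2\ell(w)}$ that gives $M(\Gamma_J)_{\ind_J}\subseteq x_JM(\Gamma)$, checks out.
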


Taking $J = \emptyset$ gives the following.

\begin{corollary}
\label{corollary:bound}
If $(W,S)$ is finite and  $\Gamma$ is
a connected $W$-digraph, then $\abs{\vertices(\Gamma)} \le \abs{W}$. 
\end{corollary}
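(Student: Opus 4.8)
The plan is to deduce this directly from Theorem~\ref{theorem:index} by specializing to $J = \emptyset$; all the substance lies in that theorem, and what remains is merely to interpret the two quantities in its bound for this choice of $J$. First I would identify the relevant parabolic subgroup: since $W_\emptyset = \spanof{\emptyset} = \set{e}$ is the trivial subgroup, its index is $\abs{W : W_\emptyset} = \abs{W}$.

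Next I would describe $\Gamma_\emptyset$ explicitly. By the definition of $\Gamma_J$, the digraph $\Gamma_\emptyset$ is obtained from $\Gamma$ by deleting every edge labeled by an element of $S \setminus \emptyset = S$, so $\Gamma_\emptyset$ retains the full vertex set $\vertices(\Gamma)$ but has no edges whatsoever. Consequently the underlying undirected multigraph of $\Gamma_\emptyset$ consists of $\abs{\vertices(\Gamma)}$ isolated vertices, and therefore $\Gamma_\emptyset$ has exactly $\abs{\vertices(\Gamma)}$ connected components. Applying Theorem~\ref{theorem:index} to the connected $W$-digraph $\Gamma$ then yields $\abs{\vertices(\Gamma)} \le \abs{W : W_\emptyset} = \abs{W}$, which is the desired inequality. (Here the hypothesis that $\Gamma$ is connected is essential and is carried over from the theorem, since a disconnected $W$-digraph could have arbitrarily many vertices.)

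I do not anticipate any real obstacle: the corollary is a formal specialization, and the only facts needing verification—that $W_\emptyset$ is trivial and that an edgeless digraph on $\vertices(\Gamma)$ has precisely one connected component per vertex—are immediate from the definitions. The genuine content is entirely contained in Theorem~\ref{theorem:index}.
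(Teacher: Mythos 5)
Your proposal is correct and is exactly the paper's argument: the paper derives the corollary by the single remark ``Taking $J = \emptyset$ gives the following'' after Theorem~\ref{theorem:index}, and your verification that $\abs{W : W_\emptyset} = \abs{W}$ and that the edgeless digraph $\Gamma_\emptyset$ has one component per vertex is precisely the intended specialization.
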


The bound in Corollary~\ref{corollary:bound} is always attained: 
see Example~\ref{example:regular}.


\begin{theorem}
\label{theorem:equallengths}
Assume $n(s,t)<\infinity$ for $s,t \in S$ and
$\Gamma$ is a connected $W$-digraph with a source or sink.  
Then for $\alpha,\beta \in \vertices(\Gamma)$, any two directed
paths from $\alpha$ to $\beta$ in $\Gamma$ have the same 
number of edges.
\end{theorem}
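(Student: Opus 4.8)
The plan is to show that a connected $W$-digraph $\Gamma$ satisfying the hypotheses admits a \emph{height function}, that is, a map $\gammaht \colon \vertices(\Gamma) \to \integers$ with $\gammaht(\beta) = \gammaht(\alpha) + 1$ for every directed edge $\alpha \to \beta$ of $\Gammadir$. Once such a function exists, any directed path from $\alpha$ to $\beta$ has exactly $\gammaht(\beta) - \gammaht(\alpha)$ edges, and the theorem follows at once. By Corollary~\ref{corollary:reversed}, together with the edge-count--preserving bijection between directed paths in $\Gamma$ and directed paths in $\Gammarev$ (with endpoints interchanged), I may assume $\Gamma$ has a source; by Theorem~\ref{theorem:acyclic}(ii) it is then acyclic, with a unique source $\sigma$ by part~(i).

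Fix $\sigma$ and attempt to define $\gammaht(\alpha)$ as the signed length of an undirected walk in $\Gammaundir$ from $\sigma$ to $\alpha$, counting $+1$ for each edge traversed along its orientation and $-1$ against it. Since $\Gammaundir$ is connected such a walk always exists, and the construction succeeds precisely when this value is independent of the chosen walk. Writing $\omega$ for the $1$-cochain assigning $+1$ to every directed edge, this amounts to showing that $\omega$ vanishes on every closed walk of $\Gammaundir$. This reformulation is faithful to the theorem: the value of $\omega$ on the closed walk obtained by following one directed $\alpha$-to-$\beta$ path and returning along another is exactly the difference of their edge counts.

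The local input is Theorem~\ref{theorem:main}. For each pair $s,t \in S$ (all with $n(s,t)<\infinity$ by hypothesis), every connected component of $\Gamma_{\set{s,t}}$ is isomorphic to one of Figures~\ref{fig:main1}--\ref{fig:main8}; since $\Gamma$ is acyclic these carry the orientations shown, in which the unique vertex with two outgoing edges is the source $\alpha_0$, and the two directed paths from $\alpha_0$ to the opposite vertex have equal edge count ($m$ in Figures~\ref{fig:main1}--\ref{fig:main6}, and $1$ in Figures~\ref{fig:main7}--\ref{fig:main8}). Thus $\omega$ vanishes on the closed walk bounding each such component, so every cycle lying inside a single two-generator subdigraph is balanced. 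The heart of the argument is to propagate this local balance. Following a Tits--Matsumoto-style scheme, I would show that any two directed paths with common endpoints are connected by a sequence of elementary moves, each replacing one branch of a divergence by the other branch inside the dihedral component determined by the two labels at that divergence, using that a vertex with outgoing edges labeled $s$ and $t$ must be the source $\alpha_0$ of its $\set{s,t}$-component, so the two branches there are the two equal-length paths to the opposite vertex.

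The main obstacle is exactly this propagation step: although each divergence is governed locally by a single dihedral component, the two branches emanating from it need not re-meet inside that component, so one must show that the dihedral cycles nonetheless generate the cycle space of $\Gammaundir$ --- the combinatorial analogue, for $W$-digraphs, of the statement that every relation in $(W,S)$ is a consequence of the dihedral (braid) relations. I expect to establish this by reducing a general closed walk to dihedral pieces through repeated application of the $\set{s,t}$-confluence above, using acyclicity to control termination; the finiteness of each $n(s,t)$, which bounds the size of every dihedral component, is what guarantees that each local reduction terminates in finitely many steps.
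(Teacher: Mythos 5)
Your overall strategy (reduce to the source case, use acyclicity, build a height function $\gammaht$ anchored at the unique source, and feed in the equal-length branches of the dihedral components from Theorem~\ref{theorem:main}) is aimed at the right target, but the step you yourself flag as ``the heart of the argument'' is left unproved, and the most natural way of carrying it out is actually false. You propose to connect any two directed paths with common endpoints by elementary moves inside dihedral components; the paper's Example~\ref{example:inequiv} (Figure~\ref{fig:nonequiv}) exhibits a connected, acyclic $W(A_3)$-digraph with two directed paths from $\alpha_2$ to $\beta_3$ that are \emph{not} related by any sequence of such moves. So the Tits--Matsumoto-style confluence, applied to arbitrary pairs of paths, cannot work as stated. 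Your fallback --- that the dihedral cycles at least generate the cycle space of $\Gammaundir$, so that the $1$-cochain $\omega$ still vanishes on all closed walks --- may well be true, but you give no argument for it beyond ``I expect to establish this by reducing a general closed walk to dihedral pieces,'' and it is exactly here that all the work lies. A local divergence at a vertex with outgoing edges labeled $s$ and $t$ identifies that vertex as the source of its $\set{s,t}$-component, but the two branches need not rejoin at that component's sink, and nothing in your sketch controls what happens after they leave the component.

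The paper closes this gap by proving a weaker, source-anchored statement: Lemma~\ref{lemma:locallyfinite}(i) shows that any two directed paths \emph{starting at the source} $\sigma$ and ending at the same vertex are $\equiv$-equivalent, via a double induction on $\mu(\sigma,\alpha)$ that is interleaved with the reachability statement (ii) ($\alpha\in\closedray{\sigma}$ and $\alpha\in\closedray{\zeta}$ imply $\zeta\in\closedray{\sigma}$); the induction is what guarantees that the source $\tau$ of the relevant $\set{s,t}$-component is itself reachable from $\sigma$ and that the rerouted paths have strictly controlled length, so the elementary move can be applied at the \emph{end} of the paths. Theorem~\ref{theorem:equallengths} then follows immediately by prefixing both of your paths $\pi_1,\pi_2$ from $\alpha$ to $\beta$ with a single directed path $\rho$ from $\sigma$ to $\alpha$ (which exists by Lemma~\ref{lemma:locallyfinite}(iii)): $\rho\pi_1\equiv\rho\pi_2$ forces equal edge counts. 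To repair your write-up you would either need to prove your cycle-space claim outright or, more practically, replace the global confluence claim with this source-anchored induction --- at which point you have reproduced the paper's Lemma~\ref{lemma:locallyfinite} rather than found an alternative route.
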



If $\Gamma$ is a $W$-digraph and $\vertices(\Gamma)$
is finite, so $M(\Gamma)$ is finite dimensional, let
$\chi_{\Gamma}$ be the character of $H$ afforded by 
$M(\Gamma)$.

\begin{theorem}
\label{theorem:charactervalues}
If $\Gamma$ is a $W$-digraph and
$\vertices(\Gamma)$ is finite, then the following hold.
\begin{enumerate}[{\upshape(i)}]
\item
If $\sigma$ is the automorphism of $\rationals(u)$ 
determined by ${}^\sigma u = -1/u$, then
$\chi_{\Gammarev}(T_w)
=
{}^\sigma \chi_{\Gamma}(T_{w^{-1}}^{-1})
$
for $w \in W$.
\item
If $n(s,t) < \infinity$ for $s, t \in S$ and 
$\Gamma$ is a acyclic, then
$\chi_{\Gammarev}(T_w)
=
\varepsilon_w u_w \chi_{\Gamma} ( T_w^{-1})
$
for $w \in W$.
\end{enumerate}
\end{theorem}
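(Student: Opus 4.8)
The plan is to reduce both parts to a single generator-level identity and then propagate it through reduced words, obtaining (i) directly and (ii) by feeding that identity into the grading forced by acyclicity. Write $\rho$ for the representation afforded by $\Gamma$, with $\tau_s=\rho(T_s)$, and $\rho_{\rev}$ for the one afforded by $\Gammarev$ (a $W$-digraph by Corollary~\ref{corollary:reversed}); we may take $\vertices(\Gamma)=\vertices(\Gammarev)$ as a common basis. The key claim for (i) is that for every $s\in S$,
\[
\rho_{\rev}(T_s)\;=\;{}^{\sigma}\!\bigl(\rho(T_s)^{-1}\bigr),
\]
where ${}^{\sigma}(\cdot)$ denotes entrywise application of $\sigma$. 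Since every vertex lies in exactly one $s$-edge, $\tau_s$ is block diagonal with one $2\times2$ block per $s$-edge, and reversing that edge interchanges the two orientation cases of \eqref{eq:taudefinition}; so the claim is checked by a direct computation, comparing ${}^{\sigma}$ of the inverse of the solid block $\bigl(\begin{smallmatrix}0&u^{2}\\1&u^{2}-1\end{smallmatrix}\bigr)$ (and of the dashed block) with the corresponding reversed block. This is the only calculation, and I expect it to be routine. Entrywise $\sigma$ is multiplicative on matrices, $\sigma$-semilinear, and commutes with $\tr$; so for a reduced expression $w=s_1\cdots s_\ell$, using $T_{w^{-1}}=T_{s_\ell}\cdots T_{s_1}$ one gets
\[
\rho_{\rev}(T_w)={}^{\sigma}\!\bigl((\rho(T_{s_\ell})\cdots\rho(T_{s_1}))^{-1}\bigr)={}^{\sigma}\rho(T_{w^{-1}}^{-1}),
\]
and taking traces yields $\chi_{\Gammarev}(T_w)={}^{\sigma}\chi_{\Gamma}(T_{w^{-1}}^{-1})$, which is (i).

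For (ii), note that $M(\Gamma)$ is the direct sum of the submodules attached to the connected components, that reversal and acyclicity are componentwise, and that $\varepsilon_w u_w\,\chi_{\Gamma}(T_w^{-1})$ is additive over components; so I may assume $\Gamma$ is connected. A finite connected acyclic digraph has a source (standard) which is unique by Theorem~\ref{theorem:acyclic}(i), every vertex is reachable from it, and by the equal-length property of Theorem~\ref{theorem:equallengths} the function $\gammaht(\alpha)=$ (common length of a directed path from the source to $\alpha$) is well defined, with $\gammaht(\beta)=\gammaht(\alpha)+1$ for every edge $\alpha\to\beta$ of either type and any label. Let $D$ be the diagonal operator with $D\alpha=(-1)^{\gammaht(\alpha)}\alpha$, so $D^2=1$. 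The representation $T_w\mapsto \varepsilon_w u_w\,\rho(T_w^{-1})^{\mathsf T}$ is the contragredient of $\rho$ twisted by the linear character $\sgn\cdot\ind$, and its value at $T_w$ has trace $\varepsilon_w u_w\,\chi_{\Gamma}(T_w^{-1})$, the right-hand side of (ii). Using the generator identity of (i) together with the fact that the two endpoints of every edge have opposite $\gammaht$-parity, I will verify blockwise that
\[
\rho_{\rev}(T_s)\;=\;D\,\bigl(\varepsilon_s u_s\,\rho(T_s^{-1})^{\mathsf T}\bigr)\,D
\qquad(s\in S),\quad \varepsilon_s u_s=-u^{2}:
\]
on each solid or dashed block this is the same $2\times2$ check as in (i), with the sign $d_\beta/d_\alpha=-1$ exactly absorbing the off-diagonal discrepancy between ${}^{\sigma}(\text{block})$ and $-u^{2}(\text{block})^{\mathsf T}$. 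Since both sides are algebra homomorphisms agreeing after conjugation by the fixed operator $D$ on the generators $T_s$, they are conjugate on all of $H$, and taking traces gives $\chi_{\Gammarev}(T_w)=\varepsilon_w u_w\,\chi_{\Gamma}(T_w^{-1})$.

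The one genuinely non-formal point is the passage, in (ii), from the per-generator relation to a statement about arbitrary $T_w$. The identity of (i) by itself only presents $\rho_{\rev}(T_w)$ as ${}^{\sigma}\rho(T_{w^{-1}}^{-1})$, with the factors in the wrong order and a spurious $\sigma$, and neither defect can be repaired generator-by-generator. What resolves both at once is the existence of a \emph{single} conjugating operator $D$ that simultaneously turns every $\rho_{\rev}(T_s)$ into $\varepsilon_s u_s\,\rho(T_s^{-1})^{\mathsf T}$; and the global consistency of $D$ is precisely the height grading supplied by acyclicity through Theorems~\ref{theorem:acyclic} and~\ref{theorem:equallengths}. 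I therefore expect the structural input (that acyclicity forces a $\gammaht$-grading in which every edge raises the height by one) to be the crux, while the block computations underlying both parts are elementary.
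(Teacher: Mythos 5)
Your proposal is correct and follows essentially the same route as the paper: part (i) is the paper's Lemma~\ref{lemma:revequiv1} and Corollary~\ref{corollary:revequiv1} (the paper phrases the entrywise $\sigma$ as passage to the twisted module ${}^\sigma M$, but the blockwise check and the propagation along reduced words are identical), and part (ii) is the paper's Lemma~\ref{lemma:revequiv2}, where your conjugating operator $D$ is exactly the paper's sign-rescaled basis $X'=\setof{(-1)^{\mu(\alpha)}\alpha}{\alpha\in X}$ built from the same height function justified by Lemma~\ref{lemma:locallyfinite}(i). The only cosmetic difference is that you present the twisted contragredient as a homomorphism conjugated by $D$, while the paper simply verifies the matrix identity on generators and multiplies along a reduced expression; the content is the same.
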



In the case of an affine Weyl group,
the following holds.    

\begin{theorem}
If $(W_J,J)$ is finite for proper subsets $J$ of $S$, 
$\Gamma$ is a finite, connected $W$-digraph,
and $M(\Gamma)$ affords a $W$-graph 
(as defined in \cite{kazhdanlusztig}) over $\rationals$, 
then $\Gamma$ is acyclic.
\label{theorem:wgraphacyclic}
\end{theorem}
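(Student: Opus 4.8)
\section*{Proof proposal}

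The plan is to reduce the statement to a positivity fact about the sign representation and then read that fact off from the $W$-graph. First I would dispose of the degenerate ranks and isolate the main case. If $\abs{S}\le 1$, or if $\abs{S}=2$ and $n(s,t)<\infinity$, then $W$ is finite and $\Gamma$ is acyclic by Corollary~\ref{corollary:numsourcessinks}, with no use of the $W$-graph hypothesis. The only way some $n(s,t)=\infinity$ can occur under the hypothesis is $\abs{S}=2$ with $W$ infinite dihedral; this case I would settle by a direct computation, checking that the module attached to a cyclic rank-two $W$-digraph (for instance the $4$-dimensional module of a directed $st$-cycle) admits no rational edge weights, hence no $W$-graph over $\rationals$. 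In the principal case $n(s,t)<\infinity$ for all $s,t$ --- which covers every $(W,S)$ with $\abs{S}\ge 3$ under the hypothesis, and in particular all affine groups --- Theorem~\ref{theorem:linearcharmults}(ii) applies: since $\Gamma$ is connected it has one connected component, so the number of acyclic components is $0$ or $1$, and $\Gamma$ is acyclic precisely when $\dim M(\Gamma)_{\sgn}=1$. Thus it suffices to prove $\dim M(\Gamma)_{\sgn}\ge 1$.

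Next I would translate $\dim M(\Gamma)_{\sgn}$ into data of the $W$-graph $\Psi$ affording $M(\Gamma)$. Writing $v_x$ for the basis vector at a vertex $x$ and $I(x)\subseteq S$ for its label, $T_s$ acts by $T_s v_x=-v_x$ when $s\in I(x)$ and by $T_s v_x=u^2 v_x+u\sum_{y:\,s\in I(y)}\mu(x,y)v_y$ when $s\notin I(x)$. A short computation with these formulas shows that $\sum_x a_x v_x$ lies in $M(\Gamma)_{\sgn}$ if and only if $a_x=0$ whenever $I(x)\ne S$; hence $\dim M(\Gamma)_{\sgn}=\#\setof{x}{I(x)=S}$. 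So the theorem reduces to the combinatorial assertion that $\Psi$ has a vertex whose label is all of $S$.

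To locate such a vertex I would play the label $S$ off against the empty label. On one side, connectedness of $\Gamma$ gives $\dim M(\Gamma)_{\ind}=1$ by Theorem~\ref{theorem:linearcharmults}(i); on the $W$-graph side this should count the vertices labeled $\emptyset$, via the sign twist $T_s\mapsto (u^2-1)-T_s$, which interchanges the $\ind$- and $\sgn$-eigenspaces and carries $W$-graphs to $W$-graphs with complemented labels. On the other side, for each proper $J\subsetneq S$ the subgroup $W_J$ is finite, so $\Gamma_J$ is acyclic by Corollary~\ref{corollary:numsourcessinks}; restricting $\Psi$ to $J$ and applying Theorem~\ref{theorem:linearcharmults} then yields, for every such $J$, the identity $\#\setof{x}{J\subseteq I(x)}=\#\setof{x}{I(x)\cap J=\emptyset}=\text{(number of components of }\Gamma_J)$. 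This is the full up-set and down-set data of the label distribution for all $J\ne S$, so M\"obius inversion on the Boolean lattice of subsets of $S$ should pin down $\#\setof{x}{I(x)=S}$ in terms of $\#\setof{x}{I(x)=\emptyset}=1$, giving $\#\setof{x}{I(x)=S}\ge 1$.

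The main obstacle is precisely this last step. The restriction identities determine every up-set and down-set count except the two extreme values $\#\setof{x}{I(x)=S}$ and $\#\setof{x}{I(x)=\emptyset}$, and the M\"obius inversion closes the gap cleanly only when $\abs{S}$ is even; when $\abs{S}$ is odd --- the case of affine $A_2$ --- the counts alone remain consistent with $\#\setof{x}{I(x)=S}=0$. Overcoming this demands more than eigenvalue bookkeeping: one must invoke the $W$-graph compatibility axioms together with the rationality of the $\mu(x,y)$, which is exactly where the hypothesis ``over $\rationals$'' is indispensable. Indeed Example~\ref{example:affinea2} produces a connected, non-acyclic $W$-digraph for affine $A_2$; the content of the theorem is that the obstruction above cannot be defeated for it, so that it affords no $W$-graph over $\rationals$, and the heart of a complete proof is to convert this impossibility into the positivity $\#\setof{x}{I(x)=S}\ge 1$ in the odd-rank case as well.
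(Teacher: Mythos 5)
Your reduction is correct as far as it goes: for $\abs{S}\ge 3$ the hypothesis forces $n(s,t)<\infinity$ for all $s,t$, Theorem~\ref{theorem:linearcharmults}(ii) turns acyclicity of the connected digraph $\Gamma$ into $\dim M(\Gamma)_{\sgn}=1$, and in a $W$-graph basis the $\sgn$-eigenspace is spanned by the vertices labeled $S$. But the argument stops exactly where the theorem begins: you concede that you cannot prove $\abs{\setof{x}{I_x=S}}\ge 1$, and you attribute the failure to a parity obstruction and to an unexploited rationality of the $\mu(x,y)$. Both diagnoses are mistaken, and the concession is the gap. The paper never uses the edge weights at all; it argues by contradiction and aims at the opposite corner of the label lattice. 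Assuming $\Gamma$ is not acyclic, Theorem~\ref{theorem:linearcharmults}(ii) gives $N_\Psi(S)=0$ and Theorem~\ref{theorem:acyclic}(ii) gives that $\Gamma$ has no sink, so $I_x\ne S$ and $\In(\beta)\ne S$ for every vertex on either side; hence every parabolic $W_{I_x}$ and $W_{\In(\beta)}$ is finite. Since $\sum_{w\in W_K}\varepsilon_w=0$ for every finite nonempty $K$, the sum $\sum_{J(w)\ne S}\eval{\chi(T_w)}{u}{0}$ collapses to $N_\Psi(\emptyset)$ when computed from $\Psi$ and to $N_\Gamma(\emptyset)$ (the number of sources) when computed from $\Gamma$, the latter using triangularity of $T_w$ at $u=0$ in an ordering compatible with the acyclic $\Gamma_{J(w)}$. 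Because $M(\Psi)_{\ind}\cong M(\Gamma)_{\ind}\ne 0$ forces $N_\Psi(\emptyset)>0$, one concludes that $\Gamma$ has a source, hence is acyclic by Theorem~\ref{theorem:acyclic}(ii) --- a contradiction. The idea you are missing is to hunt for a source (an $\emptyset$-labeled vertex) rather than a sink (an $S$-labeled one), and to exploit the vanishing of the sign sum over the finite parabolics that the non-acyclicity assumption hands you.

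In fact your own bookkeeping, pushed one step further, closes without any parity trouble. Inclusion--exclusion gives $N(\emptyset)=\sum_{J\subseteq S}(-1)^{\abs{J}}\abs{\setof{x}{J\subseteq I_x}}$ on both sides; the terms with $J\ne S$ agree (each equals the number of connected components of $\Gamma_J$, by the restriction argument you sketch), and under the non-acyclicity assumption the two $J=S$ terms are both zero, as is the source count $N_\Gamma(\emptyset)$. Subtracting the two identities yields $N_\Psi(\emptyset)-N_\Gamma(\emptyset)=(-1)^{\abs{S}}\bigl(N_\Psi(S)-N_\Gamma(S)\bigr)=0$, i.e.\ $N_\Psi(\emptyset)=0$, contradicting $N_\Psi(\emptyset)>0$. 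The sign $(-1)^{\abs{S}}$ multiplies a difference that vanishes, so the parity of $\abs{S}$ is irrelevant, and no appeal to the $W$-graph compatibility axioms or to rationality of the weights is needed. As submitted, however, your proposal explicitly leaves the decisive positivity unproved, so it is not a proof.
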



The organization of this paper is as follows.
Section~\ref{section:preliminary} contains preliminary results,
and Section~\ref{section:mainproof} contains a proof
of Theorem~\ref{theorem:main} and related results.
Section~\ref{section:acyclic} contains proofs 
of Theorems~\ref{theorem:acyclic},
\ref{theorem:linearcharmults}, 
\ref{theorem:index}, and \ref{theorem:equallengths}.
Section~\ref{section:charactervalues}
contains a proof of Theorem~\ref{theorem:charactervalues}
and related results. 
Section~\ref{section:wgraphacyclic} contains a proof
of Theorem~\ref{theorem:wgraphacyclic}, and the last section
has  additional examples.


\section{Preliminary results}
\label{section:preliminary}

Assume that $(W,S)$ is a Coxeter system and let
 $\Gamma$ be an $S$-labeled digraph.
 Throughout this and later sections, the notation $x \le y$ is used to indicate 
the usual Bruhat order on $W$ relative to $S$ when $x, y \in W$.  
For any
$s\in S$, 
we have
\begin{equation}
\label{eq:tauquadraticrelation}
(\tau_s - u^2)(\tau_s + 1) = 0
\qquad
\text{in $\gl(M)$,}
\end{equation}
where $\tau_s$ is as in \eqref{eq:taudefinition} and
$M = M(\Gamma)$
(see \cite{lusztigbarop}, 2.3).  
Indeed, suppose  
$\alpha$ is connected to $\beta$ by an edge of
$\Gamma$ labeled $s$.
Exchanging $\alpha$, $\beta$ if necessary, we can assume
this edge is directed from $\alpha$ to $\beta$.
By \eqref{eq:taudefinition},  $\tau_s$ leaves invariant the subspace
with basis $\set{\alpha,\beta}$,  
and the matrix of $\tau_s$ acting on this subspace 
with respect to this basis is
\[
\begin{pmatrix}
0 & u^2 \\
1 & u^2-1 \\
\end{pmatrix}
\qquad
\text{or}
\qquad
\begin{pmatrix}
u  & u^2 - u \\
u + 1 & u^2 - u - 1 \\
\end{pmatrix}
\]
according to whether
$\solidedge{$\alpha$}{$\beta$}{$s$} \in \edges(\Gamma)$ or
$\dashededge{$\alpha$}{$\beta$}{$s$} \in \edges(\Gamma)$. 
In either case, the eigenvalues 
are $u^2$ and $-1$, and thus
\eqref{eq:tauquadraticrelation} holds. 
Hence $\Gamma$ is a $W$-digraph if and
only if
\begin{equation}
\label{eq:taudihedralrelation}
\overbrace{ \tau_s \tau_t \cdots}^{n}
=
\overbrace{\tau_t \tau_s \cdots }^{n}
\qquad
\text{whenever $s,t\in S$, $1 < n(s,t) < \infinity$.}
\end{equation}

Define $T^{\circ}_s \in H$ by
\begin{equation*}
T_s^{\circ} = (u+1)^{-1} (T_s - u).
\end{equation*}  
(This element is denoted ${\overset{\circ}T}_s$ in
\cite{lusztigbarop}, 2.2.)
By \eqref{eq:quadraticrelation}, both
$T_s$ and $T^{\circ}_s$ are units in $H$, with
inverses given by
\[
T_s^{-1} = u^{-2}(T_s - (u^2-1)),
\qquad
(T^{\circ}_s)^{-1} = (u^2-u)^{-1}(T_s -(u^2 - u - 1)).
\]
The terminology used in the next definition
will be justified by the remarks after 
Lemma~\ref{lemma:submodule}.


\begin{definition}
Let $M$ be an $H$-module.  Then a subset $X$ of $M$
{\it supports a $W$-digraph} if $X$ is linearly independent
over $\rationals(u)$ and, 
for each  $\alpha\in X$ and $s \in S$,
\[
X \cap \set{T_s\alpha, T_s^{-1}\alpha, T^{\circ}_s\alpha,
  (T^{\circ}_s)^{-1}\alpha} \ne \emptyset .
\] 
\end{definition}


\begin{lemma}
\label{lemma:submodule}
If $M$ is an $H$-module and $X\subseteq M$ 
supports a $W$-digraph, then the following hold.
\begin{enumerate}[{\upshape(i)}]
\item 
If $s \in S$ and $\alpha \in X$, then $\alpha$, 
$T_s\alpha$, $T_s^{-1}\alpha$, $T^{\circ}_s\alpha$,
  $(T^{\circ}_s)^{-1}\alpha$ are distinct and
$X$ contains a unique element of 
$\set{T_s\alpha, T_s^{-1}\alpha, T^{\circ}_s\alpha,
  (T^{\circ}_s)^{-1}\alpha}$.
\item
The subspace of $M$ spanned by $X$ is an $H$-submodule of $M$.
\end{enumerate}
\end{lemma}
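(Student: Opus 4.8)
The plan is to handle the two parts in turn, the decisive preliminary step being to show that the support hypothesis forces $\alpha$ and $T_s\alpha$ to be linearly independent for every $\alpha \in X$ and every $s \in S$. I would argue this by contradiction. Since $\alpha$ lies in the linearly independent set $X$ we have $\alpha \ne 0$, so linear dependence of $\alpha$ and $T_s\alpha$ means $T_s\alpha = c\alpha$ for some scalar $c$, and the quadratic relation \eqref{eq:quadraticrelation} forces $c \in \set{u^2, -1}$. Using the explicit formulas $T_s^{-1} = u^{-2}(T_s - (u^2-1))$, $T^{\circ}_s = (u+1)^{-1}(T_s - u)$, and $(T^{\circ}_s)^{-1} = (u^2-u)^{-1}(T_s - (u^2-u-1))$, each of the four vectors $T_s\alpha$, $T_s^{-1}\alpha$, $T^{\circ}_s\alpha$, $(T^{\circ}_s)^{-1}\alpha$ is then a scalar multiple $\lambda\alpha$ with $\lambda \ne 1$: when $c = u^2$ the four scalars are $u^2$, $u^{-2}$, $u(u-1)/(u+1)$, $(u+1)/(u(u-1))$, and when $c = -1$ all four equal $-1$. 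Because $\alpha \in X$ and $X$ is linearly independent, no multiple $\lambda\alpha$ with $\lambda \ne 1$ can belong to $X$, so the intersection $X \cap \set{T_s\alpha, T_s^{-1}\alpha, T^{\circ}_s\alpha, (T^{\circ}_s)^{-1}\alpha}$ would be empty, contradicting the support hypothesis.

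For part (i), I would then work inside the two-dimensional space $\linspan\set{\alpha, T_s\alpha}$, expressing the five vectors $\alpha$, $T_s\alpha$, $T_s^{-1}\alpha$, $T^{\circ}_s\alpha$, $(T^{\circ}_s)^{-1}\alpha$ in the basis $\set{\alpha, T_s\alpha}$. The coefficient of $T_s\alpha$ in these is $0$, $1$, $u^{-2}$, $(u+1)^{-1}$, $(u^2-u)^{-1}$, respectively; these are pairwise distinct in $\rationals(u)$, so the five vectors are pairwise distinct, giving the first assertion. The support hypothesis already gives that $X$ meets $\set{T_s\alpha, T_s^{-1}\alpha, T^{\circ}_s\alpha, (T^{\circ}_s)^{-1}\alpha}$; for uniqueness, note that if $X$ contained two distinct members of that set, then together with $\alpha$ these would be three distinct vectors of $X$ lying in the two-dimensional space $\linspan\set{\alpha, T_s\alpha}$, contradicting the linear independence of $X$.

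For part (ii), since $H$ is generated as a $\rationals(u)$-algebra by $\set{T_s \mid s \in S}$, it suffices to show that $\linspan X$ is invariant under each $T_s$. Fix $\alpha \in X$ and $s \in S$, and let $\beta$ be the unique element of $X \cap \set{T_s\alpha, T_s^{-1}\alpha, T^{\circ}_s\alpha, (T^{\circ}_s)^{-1}\alpha}$ supplied by part (i). Then $\beta \in \linspan\set{\alpha, T_s\alpha}$, and since $\beta$ is distinct from $\alpha$ yet both lie in the linearly independent set $X$, the pair $\set{\alpha, \beta}$ is linearly independent, hence a basis of the two-dimensional space $\linspan\set{\alpha, T_s\alpha}$. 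Consequently $T_s\alpha \in \linspan\set{\alpha, \beta} \subseteq \linspan X$, so $\linspan X$ is $T_s$-invariant and therefore an $H$-submodule.

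I expect the main obstacle to be precisely the first step: ruling out the linear dependence of $\alpha$ and $T_s\alpha$ is where the support hypothesis is genuinely used, and it requires checking that the relevant eigenvalue scalars differ from $1$. Once independence is secured, both parts reduce to routine two-dimensional linear algebra.
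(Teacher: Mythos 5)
Your proof is correct and follows essentially the same route as the paper's: both arguments reduce everything to the two-dimensional space spanned by $\alpha$ and its images under the four operators, show the five vectors are distinct by comparing coefficients in a basis of that space, and get uniqueness and $T_s$-invariance of $\linspan X$ by dimension counting. The only difference is cosmetic: the paper works in the eigenbasis $\set{\gamma,\delta}$ coming from the quadratic relation (which makes the non-degeneracy step slightly slicker), whereas you use the basis $\set{\alpha, T_s\alpha}$ and rule out degeneracy by an explicit eigenvalue computation.
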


\begin{proof}
Suppose $s\in S$ and $\alpha\in X$.  
Put $Y = \set{T_s\alpha, T_s^{-1}\alpha, T^{\circ}_s\alpha,
(T^{\circ}_s)^{-1}\alpha}$.  
By \eqref{eq:quadraticrelation}, there are unique $\gamma,\delta \in M$
such that
\[
\alpha = \gamma + \delta,
\qquad
T_s \gamma = - \gamma,
\qquad
T_s \delta = u^2 \delta.
\]
Thus
\begin{align*}
T_s \alpha 
& =  -\gamma + u^2 \delta, &
T_s^{-1} \alpha 
& =  -\gamma + \frac{1}{u^2}\delta, \\
T^{\circ}_s \alpha
& =  -\gamma + \frac{u^2-u}{u+1}\delta, &
(T^{\circ}_s)^{-1} \alpha
& =  -\gamma + \frac{u+1}{u^2-u}\delta .
\end{align*}
Since $X$ is linearly independent 
and $X$ contains $\alpha$ and at least one
element of $Y$, it follows that
$\gamma$, $\delta$ are linearly independent over $\rationals(u)$.
Therefore $\alpha$, $T_s\alpha$, $T_s^{-1}\alpha$, $T^{\circ}_s\alpha$,
$(T^{\circ}_s)^{-1}\alpha$ are distinct.  Also, since $\alpha$, 
$T_s\alpha$, $T_s^{-1}\alpha$, $T^{\circ}_s\alpha$,
$(T^{\circ}_s)^{-1}\alpha$ are all in
$\linspan \set{\gamma,\delta}$, $X$ can contain at most one
element of
$Y$.  Thus (i) holds.
Further, since $X$ contains two elements of
$\linspan \set{\gamma,\delta}$, 
$\linspan X$ contains $\linspan \set{\gamma,\delta}$ by
dimension, and
thus $T_s \alpha \in \linspan X$.  Since $\alpha\in X$
was arbitrary, we have
$T_s \linspan  X \subseteq \linspan X$.  Thus
$\linspan  X$ is an $H$-submodule of $M$ since 
$s \in S$ was arbitrary, so (ii) holds.
\end{proof}

If $M$ is an $H$-module and $X\subseteq M$ supports
a $W$-digraph, then we construct a directed multigraph 
 $\Gamma$, as follows.  If $\alpha, \beta\in X$ and
$s\in S$, then 
\begin{equation*}
\begin{split}
\solidedge{$\alpha$}{$\beta$}{$s$} \text{ is an edge of $\Gamma$ if } &
\beta = T_s \alpha , \\
\dashededge{$\alpha$}{$\beta$}{$s$} \text{ is an edge of $\Gamma$ if }  &
\beta = T^{\circ}_s \alpha .
\end{split}
\end{equation*}
Then $\Gamma$ is a well-defined $S$-labeled digraph by
Lemma~\ref{lemma:submodule}.  
Moreover, from the definition of $T^{\circ}_s$, it is easily
checked that $H$ acts on $M_0 = \linspan  X$ according to
\[
T_s \alpha = \tau_s(\alpha),
\]
where $\tau_s$ is as in \eqref{eq:taudefinition}. 
Therefore $\Gamma$ is indeed a $W$-digraph
with associated $H$-module $M_0$.


\begin{lemma}
\label{lemma:partitions}
Suppose $X$ is a linearly independent subset of an $H$-module $M$.
Then $X$ supports a $W$-digraph if and only if for each $s\in S$,
there exists a partition $P_s$ of $X$ such that, for all $U \in P_s$, 
there are $\alpha, \beta \in U$ such that $\alpha \ne \beta$, 
$U = \set{\alpha,\beta}$, and either $T_s \alpha = \beta$ or
$T^{\circ}_s \alpha = \beta$.
\end{lemma}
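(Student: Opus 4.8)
The statement is an equivalence, so I would prove the two directions separately, and the core observation is that the definition of "supports a $W$-digraph" is already an almost-pointwise condition on each $\alpha \in X$, while a partition into size-two blocks is a global statement. The bridge between the two is Lemma~\ref{lemma:submodule}(i), which tells me that for fixed $s$, each $\alpha \in X$ has a \emph{unique} partner in $X$ among the four elements $T_s\alpha$, $T_s^{-1}\alpha$, $T^{\circ}_s\alpha$, $(T^{\circ}_s)^{-1}\alpha$. The plan is to show that "having a unique partner for each $\alpha$" is exactly the same data as "a partition of $X$ into (ordered-forgetting) pairs."

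For the forward direction, assume $X$ supports a $W$-digraph and fix $s \in S$. By Lemma~\ref{lemma:submodule}(i), for each $\alpha \in X$ there is a unique $\beta = \beta(\alpha) \in X$ lying in $\set{T_s\alpha, T_s^{-1}\alpha, T^{\circ}_s\alpha, (T^{\circ}_s)^{-1}\alpha}$, and that part of the lemma also gives $\beta \ne \alpha$ (the five elements listed there are distinct, so in particular $\beta \ne \alpha$). The key step is to check that the relation $\alpha \sim \beta$ is \emph{symmetric}: I must verify that $\beta = T_s\alpha$ forces $\alpha = T_s^{-1}\beta \in X$, that $\beta = T^{\circ}_s\alpha$ forces $\alpha = (T^{\circ}_s)^{-1}\beta \in X$, and the two reversed cases likewise. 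Each of these follows because $T_s^{-1}$ and $(T^{\circ}_s)^{-1}$ are genuine inverses in $H$ (as recorded before the definition), so $\alpha$ itself lies in the four-element set associated to $\beta$, and uniqueness then forces $\alpha = \beta(\beta)$. Moreover, since $T^{\circ}_s = (u+1)^{-1}(T_s - u)$, one checks that $T_s\alpha$ and $T^{\circ}_s\alpha$ are never equal for $\alpha$ in a basis (they give different coefficients of $\delta$ in the computation of Lemma~\ref{lemma:submodule}), so for a \emph{fixed} unordered pair $\set{\alpha,\beta}$ exactly one of the two edge-types occurs; symmetry of $\sim$ therefore yields a well-defined partition $P_s$ whose blocks are the two-element sets $\set{\alpha,\beta(\alpha)}$, and relabeling within each block lets me name $\alpha,\beta$ so that $T_s\alpha = \beta$ or $T^{\circ}_s\alpha = \beta$, as required.

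For the converse, suppose such partitions $P_s$ exist. Fix $\alpha \in X$ and $s \in S$, and let $U = \set{\alpha,\beta} \in P_s$ be the block containing $\alpha$. If $\alpha$ is the designated first element, then $\beta \in \set{T_s\alpha, T^{\circ}_s\alpha}$, so $X \cap \set{T_s\alpha, T_s^{-1}\alpha, T^{\circ}_s\alpha, (T^{\circ}_s)^{-1}\alpha}$ contains $\beta$ and is nonempty; if instead $\alpha$ is the second element, say $T_s\beta = \alpha$ or $T^{\circ}_s\beta = \alpha$, then applying the inverse gives $\beta = T_s^{-1}\alpha$ or $\beta = (T^{\circ}_s)^{-1}\alpha$, so again $\beta$ lies in that four-element set. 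In every case the intersection is nonempty, and since $X$ is linearly independent by hypothesis, $X$ supports a $W$-digraph by definition.

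The step I expect to be the main obstacle is the symmetry verification in the forward direction: one must be careful that the ``unique partner'' relation coming from Lemma~\ref{lemma:submodule}(i) really is symmetric and that a single unordered pair does not simultaneously admit both a solid and a dashed interpretation. Once the distinctness and invertibility facts already established in the excerpt are invoked, this is routine, but it is the only place where the bookkeeping could go wrong, and it is what makes the two-element blocks well-defined rather than merely a covering of $X$ by overlapping pairs.
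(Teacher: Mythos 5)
Your proof is correct and follows essentially the same route as the paper: both directions rest on Lemma~\ref{lemma:submodule}(i) for uniqueness of the partner and on the invertibility of $T_s$ and $T^{\circ}_s$ for the symmetry that makes the blocks $\set{\alpha,\beta}$ a genuine partition. The only difference is your extra check that a pair cannot carry both a solid and a dashed interpretation, which the paper omits because the lemma only asserts a disjunction (and which in any case follows from the distinctness statement in Lemma~\ref{lemma:submodule}(i)).
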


\begin{proof}
First suppose $X$ supports a $W$-digraph.
Let $s \in S$.  For $\lambda \in X$, define $U_\lambda = \set{\lambda,\mu}$
where
\[
X \cap \set{T_s\lambda, T_s^{-1}\lambda, T^{\circ}_s\lambda,
(T^{\circ}_s)^{-1}\lambda}
= \set{\mu}.
\]
Then $\lambda \in X \cap \set{T_s\mu, T_s^{-1}\mu, T^{\circ}_s\mu,
(T^{\circ}_s)^{-1}\mu}$, and so $U_\lambda = U_\mu$.  
By Lemma~\ref{lemma:submodule},
$P_s = \setof{U_\lambda}{\lambda \in X}$ is a partition of 
$X$ satisfying
the conditions above: if $U = U_\lambda$ and
$\mu = T_s \lambda$ or $\mu = T^{\circ}_s \lambda$, then take
$\alpha = \lambda$, $\beta = \mu$, and otherwise take
$\alpha = \mu$, $\beta = \lambda$.

Conversely, suppose for each $s\in S$, a partition $P_s$ satisfying
the conditions above exists.  Let $\gamma \in X$.  There is some
$\delta \in X$ such that $U = \set{\gamma,\delta} \in P_s$.  For this
$\delta$ we either have $T_s^{\pm 1} \gamma = \delta$ or
$(T^{\circ}_s)^{\pm1} \gamma = \delta$.  Thus $X$ supports a
$W$-digraph.
\end{proof}


\begin{lemma}
\label{lemma:eigenvalues}
Suppose $M$ is an $H$-module with
basis $X$ supporting a
 $W$-digraph $\Gamma$, 
 $v = \sum_{\gamma\in X} \lambda_\gamma \gamma \in M$, 
 and  $s \in S$.
Then the following hold.
\begin{enumerate}[{\upshape(i)}]
\item
$T_s v = u^2 v$ if and only if
$\lambda_\beta = \lambda_\alpha$ whenever 
\solidedge{$\alpha$}{$\beta$}{$s$}
or
\dashededge{$\alpha$}{$\beta$}{$s$}
is an edge of $\Gamma$.
\item
$T_s v = - v$ if and only if
\[
\lambda_\beta
=
\begin{cases}
- u^{-2} \lambda_\alpha & 
\text{whenever $\solidedge{$\alpha$}{$\beta$}{$s$} \in \edges(\Gamma)$,} \\
- (u+1)(u^2-u)^{-1} \lambda_\alpha &
\text{whenever $\dashededge{$\alpha$}{$\beta$}{$s$} \in \edges(\Gamma)$.}
\end{cases}
\]
\end{enumerate}
\end{lemma}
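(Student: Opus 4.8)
The plan is to fix $s \in S$ and reduce both equivalences to a blockwise eigenvalue computation for $\tau_s = \rho(T_s)$. By Lemma~\ref{lemma:partitions} there is a partition $P_s$ of the basis $X$ into two-element blocks $U = \set{\alpha,\beta}$, where $\alpha$ and $\beta$ are the two endpoints of a single edge of $\Gamma$ labeled $s$; after naming the endpoints suitably I may assume this edge is directed from $\alpha$ to $\beta$, and that it is either solid or dashed. As in the verification of \eqref{eq:tauquadraticrelation}, each subspace $M_U = \linspan U$ is $\tau_s$-invariant, and since the blocks of $P_s$ partition the basis $X$ we obtain a direct sum decomposition $M = \bigoplus_{U \in P_s} M_U$ into $\tau_s$-invariant two-dimensional subspaces.

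Since this decomposition is $\tau_s$-invariant, $\tau_s$ is block diagonal with respect to it, so for $c \in \set{u^2,-1}$ a vector $v$ satisfies $T_s v = c v$ if and only if the component of $v$ in each block $M_U$ is a $c$-eigenvector of $\tau_s|_{M_U}$ (allowing the zero vector). It therefore suffices to determine, on a single block $U = \set{\alpha,\beta}$ with edge directed from $\alpha$ to $\beta$, the $u^2$- and $(-1)$-eigenspaces of $\tau_s$. With respect to the ordered basis $(\alpha,\beta)$, the matrix of $\tau_s|_{M_U}$ is exactly one of the two $2\times 2$ matrices displayed in the verification of \eqref{eq:tauquadraticrelation}, according as the edge is solid or dashed, and both have characteristic roots $u^2$ and $-1$.

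What remains is the routine diagonalization of these two matrices. Solving $(\tau_s - u^2)w = 0$ on the block gives, in both the solid and dashed cases, the one-dimensional eigenspace spanned by $\alpha + \beta$, which translates into the coefficient condition $\lambda_\alpha = \lambda_\beta$ of part~(i). Solving $(\tau_s + 1)w = 0$ gives the eigenspace spanned by $-u^2\alpha + \beta$ in the solid case and by $-(u^2-u)\alpha + (u+1)\beta$ in the dashed case, yielding the relations $\lambda_\beta = -u^{-2}\lambda_\alpha$ and $\lambda_\beta = -(u+1)(u^2-u)^{-1}\lambda_\alpha$ of part~(ii). Collecting these blockwise conditions over all edges labeled $s$ establishes both equivalences at once, in both directions, because $v$ is a $c$-eigenvector precisely when each of its block components is. I expect no genuine obstacle here; the only point needing care is the orientation bookkeeping, but since the hypotheses are already phrased per edge with $\alpha$ the tail and $\beta$ the head—matching exactly how each block is set up—no further case analysis on the direction of edges is required.
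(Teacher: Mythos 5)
Your proposal is correct and follows essentially the same route as the paper: decompose $M$ via the partition $P_s$ of Lemma~\ref{lemma:partitions} into $\tau_s$-invariant two-dimensional blocks, observe that $v$ is a $c$-eigenvector exactly when each block component is, and diagonalize the two $2\times 2$ matrices to read off the coefficient relations. The eigenvectors you exhibit are scalar multiples of the ones in the paper's proof, so the argument matches in substance as well as in outline.
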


\begin{proof}
With $P_s$ as in Lemma~\ref{lemma:partitions}, 
$M$ is the direct sum of 
the subspaces $\linspan \set{\alpha,\beta}$,
 $\set{\alpha,\beta}\in P_s$.  Also, $T_s$ leaves each
 such subspace invariant, with eigenvalues $u^2$
 and $-1$.
  Hence it suffices to show that for
 $\set{\alpha,\beta}\in P_s$,  
 $\linspan \set{\alpha,\beta}$
 has a basis
 consisting of eigenvectors for  $T_s$ of the form 
 $\lambda_\alpha \alpha + \lambda_\beta \beta$
 with coefficients satisfying the relations of (i) and (ii).  
If \solidedge{$\alpha$}{$\beta$}{$s$} is an edge of
$\Gamma$, then
\[
T_s (\alpha + \beta)
= \beta + \left(u^2 \alpha + (u^2-1)\beta\right) 
= u^2 (\alpha+\beta)
\]
and
\[
T_s \left(\alpha - u^{-2} \beta\right)
= \beta - u^{-2} \left(u^2 \alpha + (u^2-1)\beta \right)
= -\left(\alpha - u^{-2} \beta\right) ,
\]
so the  basis
$\set{\alpha + \beta, \alpha - u^{-2} \beta}$
has the desired property.
On the other hand, if 
\dashededge{$\alpha$}{$\beta$}{$s$} is an 
edge of $\Gamma$, then
\begin{equation*}
\begin{split}
T_s (\alpha + \beta)
& = 
\left((u \alpha + (u+1)\beta\right) +
   \left((u^2-u)\alpha + (u^2-u-1)\beta \right) \\
& = 
u^2 ( \alpha + \beta )
\end{split}
\end{equation*}
and
\begin{equation*}
\begin{split}
& T_s \left(\alpha - (u+1)(u^2-u)^{-1} \beta\right) \\
& \qquad
=
\left((u \alpha + (u+1)\beta\right) \\
& \qquad  \qquad
  {}  - (u+1)(u^2-u)^{-1} \left((u^2-u)\alpha + (u^2-u-1)\beta\right) \\
& \qquad
=
- \left(\alpha - (u+1)(u^2-u)^{-1} \beta\right) , 
\end{split}
\end{equation*}
so $\set{\alpha + \beta, \alpha - (u+1)(u^2-u)^{-1} \beta}$
is an appropriate basis.
\end{proof}

 
For the remainder of this section we assume
$J = \set{s,t} \subseteq S$, 
$1 < n = n(s,t) < \infinity$.
For $0 \le k \le n$, define elements $s_k$, $t_k$ of
$W_J$ by
\[
s_{k} =
\overbrace{ \cdots s t s }^{k},
\qquad
t_{k} =
\overbrace{ \cdots t s t }^{k},
\]
with $k$ factors in each product, alternately 
$s$ and $t$. 
For example, $s_{0} = e = t_{0}$, and
$s_{n} = w_0 = t_{n}$ is the longest element of $W_J$.
Define elements $\sigma_k$ of $H_J$ as follows:
\[
\sigma_k = \sum_{\myatop{w \in W_J}{\ell(w)=k}} T_w.
\]
Thus $\sigma_0 = T_e$, $\sigma_n = T_{w_0}$, and
$\sigma_k = T_{s_{k}}+ T_{t_{k}}$ for $0 < k < n$.


\begin{lemma}
\label{lemma:dihedral}
Suppose 
$a_0 = \sigma_k + \sum_{\myatop{w \in W_J}{\ell(w) < k}} \gamma_w T_w \in H_J$,
where $0 \le k < n$ and $\gamma_w \in \rationals(u)$ for $w\in W_J$.  
Suppose further that 
for $0 \le j \le n-k$, 
$\overline{S}_j \in \set{T_s, T^{\circ}_s}$ and
$\overline{T}_j \in \set{T_t,T_t^\circ}$.
Put $b_0=a_0$, and 
define $a_1,\dots,a_{n-k},b_1,\dots,b_{n-k}$ by
\[
a_{j+1} 
=
\begin{cases}
\overline{S}_j a_j & \text{ if $j$ is even,} \cr
\overline{T}_j a_j & \text{ if $j$ is odd,} \cr
\end{cases}
\quad\text{and}\quad
b_{j+1} 
=
\begin{cases}
\overline{T}_j b_j & \text{ if $j$ is even,} \cr
\overline{S}_j b_j & \text{ if $j$ is odd} \cr
\end{cases}
\]
for $0 \le j < n-k$.
Then
$X=\set{a_0,a_1,\dots,a_{n-k-1}, b_1,b_2,\dots,b_{n-k}}$ is linearly 
independent.
Moreover, if $a_{n-k}=b_{n-k}$,  then 
$X$ supports a $W_J$-digraph and
$L=\linspan X$ is a left ideal of $H_J$.
\end{lemma}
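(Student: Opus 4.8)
The plan is to analyze the elements $a_j$, $b_j$ through the length filtration of $H_J$, writing $H_J^{(m)} = \linspan\setof{T_w}{w \in W_J,\ \ell(w) \le m}$ and tracking the top-length component (the ``leading term'') of each. The backbone of the argument is the following claim: for $1 \le j \le n-k$ there is a nonzero scalar $c_j \in \rationals(u)$ with
\[
a_j \equiv c_j\,T_{p_j} \pmod{H_J^{(k+j-1)}},
\]
where, for $k+j<n$, $p_j$ is the unique element of $W_J$ of length $k+j$ whose leftmost letter is $s$ when $j$ is odd and $t$ when $j$ is even, while $p_{n-k}=w_0$; a parallel statement holds for $b_j$, with the roles of $s$ and $t$ interchanged, so that the leading element $q_j$ of $b_j$ has leftmost letter $t$ when $j$ is odd and $s$ when $j$ is even. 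I would prove this by induction on $j$ using the multiplication rule \eqref{eq:leftmultbys}. The base case computes the leading term of $\overline{S}_0 a_0$ (resp.\ $\overline{T}_0 b_0$) from $a_0=\sigma_k+(\text{lower order})$: since $\sigma_k$ equals $T_{s_k}+T_{t_k}$ (or $T_e$ when $k=0$), exactly one of its terms is lengthened by left multiplication by $s$, producing a single leading term of length $k+1$ with leftmost letter $s$. For the inductive step, $a_{j+1}$ is obtained from $a_j$ by an $s$-operator when $j$ is even and a $t$-operator when $j$ is odd; the leading element $p_j$ then has leftmost letter opposite to the generator being applied, so that generator lengthens $p_j$, while the lower-order terms of $a_j$ contribute only in degrees $\le k+j$. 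Passing from $T_r$ to $T_r^{\circ}=(u+1)^{-1}(T_r-u)$ merely rescales the leading coefficient by the nonzero factor $(u+1)^{-1}$, so $c_{j+1}\ne 0$.

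Granting the claim, \emph{linear independence} of $X$ follows by a descending induction on length. The leading terms of the elements of $X$ occur in lengths $k,k+1,\dots,n$: only $a_0$, with leading term $\sigma_k\ne 0$, contributes in length $k$; only $b_{n-k}$ contributes in length $n$; and for each $m$ with $k<m<n$ the two elements $a_{m-k}$ and $b_{m-k}$ have leading terms $c_{m-k}T_{p_{m-k}}$ and $d_{m-k}T_{q_{m-k}}$ with $p_{m-k}\ne q_{m-k}$, since their leftmost letters differ. In a vanishing linear combination, comparing the length-$n$ component forces the coefficient of $b_{n-k}$ to vanish; comparing successively lower length components, and using that $T_{p_{m-k}}$ and $T_{q_{m-k}}$ are linearly independent, forces all remaining coefficients to vanish, down to that of $a_0$. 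Hence $X$ is linearly independent, and in particular its listed elements are pairwise distinct.

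Now assume $a_{n-k}=b_{n-k}$. I would record the edges produced by the construction following Lemma~\ref{lemma:submodule}: each relation $a_{j+1}=\overline{S}_j a_j$ is a solid or dashed $s$-edge $a_j\to a_{j+1}$ according as $\overline{S}_j$ is $T_s$ or $T_s^{\circ}$, each relation $a_{j+1}=\overline{T}_j a_j$ is a $t$-edge, and similarly for the $b_j$. With the two identifications $a_0=b_0$ and $a_{n-k}=b_{n-k}$, these $2(n-k)$ edges arrange the $2(n-k)$ vertices of $X$ into a single cycle whose labels alternate between $s$ and $t$: an interior vertex $a_j$ meets the edges $a_{j-1}\to a_j$ and $a_j\to a_{j+1}$, which carry opposite labels, and the same holds at each $b_j$ and at the two shared endpoints. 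Thus every vertex lies on exactly one $s$-edge and exactly one $t$-edge. Taking $P_s$ and $P_t$ to be the sets of (unordered) $s$-edges and $t$-edges gives partitions of $X$ into pairs $\set{\alpha,\beta}$ with $T_r\alpha=\beta$ or $T_r^{\circ}\alpha=\beta$, so Lemma~\ref{lemma:partitions} shows that $X$ supports a $W_J$-digraph. Finally, viewing $H_J$ as a left module over itself and applying Lemma~\ref{lemma:submodule}(ii) to $X\subseteq H_J$ shows that $L=\linspan X$ is an $H_J$-submodule of $H_J$, that is, a left ideal.

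The main obstacle is the leading-term claim of the first paragraph; everything afterward is bookkeeping. The claim requires that the prescribed alternation of $s$- and $t$-type operators keeps the leading element lengthening at every step from length $k$ up to $n$, never collapsing onto a shorter element prematurely. The two delicate points are the base case, where the two-term leading part $\sigma_k$ must collapse to a single surviving leading term, and the check that passing to the ``circle'' variants $T_r^{\circ}$ never destroys the leading coefficient.
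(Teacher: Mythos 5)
Your proposal is correct and follows essentially the same route as the paper: the paper likewise identifies the unique maximal-length basis element $T_w$ occurring in each $a_j$ and $b_j$ (namely $s_{j+k}$ or $t_{j+k}$ according to the parity of $k$, which is exactly your leftmost-letter description) and deduces linear independence by triangularity, then exhibits the partitions $P_s$, $P_t$ into adjacent pairs and invokes Lemma~\ref{lemma:partitions} and Lemma~\ref{lemma:submodule}(ii). Your write-up merely makes explicit the induction behind the leading-term formula, which the paper states without proof.
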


\begin{proof}
If $1 \le j \le n-k$ 
and $a_j$ is expressed as a linear combination
of $\setof{T_w}{w\in W_J}$, then
the unique $w \in W$ of maximal length such that $T_w$ appears 
with nonzero coefficient is given by
\[
w = 
\begin{cases}
s_j s_k = s_{j+k} & \text{if $k$ is even,} \\
s_j t_k = t_{j+k} & \text{if $k$ is odd.} \\
\end{cases}
\]
Similarly, if $1\le j \le n-k$, then the unique $w \in W_J$ of maximal 
length such that $T_w$ appears with nonzero coefficient in $b_j$
is given by 
\[
w =
\begin{cases}
t_j t_k = t_{j+k} & \text{if $k$ is even,} \\
t_j s_k = s_{j+k} & \text{if $k$ is odd.} \\
\end{cases}
\]
Thus $X$ is linearly independent.

Suppose $a_{n-k} = b_{n-k}$.
If $n-k$ is even, then the partitions
\begin{equation*}
\begin{split}
P_s 
& = 
 \set{\set{a_0,a_1},\set{b_1,b_2},\dots,\set{b_{n-k-1},b_{n-k}}}, \\
 P_t 
 & =  \set{\set{b_0,b_1},\set{a_1,a_2},\dots,\set{a_{n-k-1},a_{n-k}}}
\end{split}
\end{equation*}
satisfy the conditions of Lemma~\ref{lemma:partitions}.  
On the other hand, if $n-k$ is odd, then the partitions
\begin{equation*}
\begin{split}
 P_s 
 & =
 \set{\set{a_0,a_1},\set{b_1,b_2},\dots,\set{a_{n-k-1},a_{n-k}}}, \\
 P_t 
 & = \set{\set{b_0,b_1},\set{a_1,a_2},\dots,\set{b_{n-k-1},b_{n-k}}}
\end{split}
\end{equation*}
satisfy the conditions of Lemma~\ref{lemma:partitions}.
Thus $X$ supports a $W_J$-digraph,
and $L = \linspan  X$ is a left ideal of $H_J$ by 
Lemma~\ref{lemma:submodule}(ii). 
\end{proof}


For $d \ge 0$, define a polynomial $p_d(u) \in \rationals[u]$ as follows: 
$p_0(u) = 1$, and for $d > 0$, 
\[
p_d(u)
=
1 + 2 \sum_{i=1}^{d-1} (-u^2)^{i} + (-u^2)^{d}.
\]
Thus $p_1(u) = 1-u^2$, $p_2(u) = 1-2u^2 + u^4$, $p_3(u) = 1-2u^2+2u^4-u^6$.
Let $y \in W_J$.  
A straightforward induction argument based on {2.0.b} and {2.0.c} of
\cite{kazhdanlusztig} shows 
\begin{equation}
\label{eq:inverse}
u^{2\ell(y)} T_{y^{-1}}^{-1}
=
T_y
+ \sum_{x < y} p_{\ell(y)-\ell(x)}(u) T_x.
\end{equation}

For $0 \le j \le n$, we define elements $\widetilde{\varphi}_j$, 
$\widetilde{\eta}_j$, $\widetilde{\gamma}_j$, $\widetilde{\delta}_j$ of
$H_J$, 
as follows:
\begin{equation*}
\left\{
\begin{aligned}
{\widetilde{\varphi}}_j
& =  \sum_{i=0}^{j} p_{j-i}(u) \sigma_i \\
& = 
\sigma_j + (1-u^2) \sigma_{j-1} + 
(1-2u^2+u^4) \sigma_{j-2} + \cdots \\
& \qquad\qquad\qquad
{} + (1 - 2u^2 +2 u^4 \mp \cdots + 2(-u^2)^{j-1} + (-u^2)^j) \sigma_0, \\
{\widetilde{\eta}}_j 
& = 
{\widetilde{\varphi}}_j + u {\widetilde{\varphi}}_{j-1} + u^2{\widetilde{\varphi}}_{j-2} + \cdots + u^{j} {\widetilde{\varphi}}_0 , \\
{\widetilde{\gamma}}_j 
& = 
{\widetilde{\varphi}}_j - u {\widetilde{\varphi}}_{j-1} + u^2{\widetilde{\varphi}}_{j-2} \mp \cdots + (-u)^{j} {\widetilde{\varphi}}_0 , \\
{\widetilde{\delta}}_j 
& = 
\frac{1}{2}({\widetilde{\eta}}_j + {\widetilde{\gamma}}_j) .
\end{aligned}
\right.
\end{equation*}
These elements are used in describing the constructions
of Lusztig in the next section.

If $0 < j < n$, then by \eqref{eq:inverse} we have
\begin{equation}
\label{eq:varphi}
\begin{split}
{\widetilde{\varphi}}_j
& = 
T_{s_j} + T_{t_j} + (1-u^2) \sigma_{j-1} +
   (1-2u^2+u^4) \sigma_{j-2} + \cdots \\
 & \qquad
 {} + (1 - 2u^2 \pm \cdots + 2 (-u^2)^{j-1}+ (-u^2)^{j}) \sigma_{0} \\
& = 
u^{2j} T_{s_{j}^{-1}}^{-1} + T_{t_{j}} 
= 
u^{2j} T_{t_{j}^{-1}}^{-1} + T_{s_{j}} .
\end{split}
\end{equation}


\begin{lemma}
\label{lemma:varphi}
If $0 < j \le k$ and $j+k \le n$, then
\[
T_{s_k^{-1}} {\widetilde{\varphi}}_j 
= 
u^{2j} T_{s_{k-j}^{-1}} + T_{s_{k+j}^{-1}}
\qquad
\text{and}
\qquad
T_{t_k^{-1}} {\widetilde{\varphi}}_j 
=
u^{2j} T_{t_{k-j}^{-1}} + T_{t_{k+j}^{-1}} .  
\]
\end{lemma}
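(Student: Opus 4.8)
The plan is to deduce both identities simultaneously from the two-term expression \eqref{eq:varphi}, namely $\widetilde{\varphi}_j = u^{2j}T_{s_j^{-1}}^{-1} + T_{t_j} = u^{2j}T_{t_j^{-1}}^{-1} + T_{s_j}$, by induction on $k$. Throughout I read each $s_m^{-1}$ and $t_m^{-1}$ as its reduced word in alternating letters: for $0 \le m \le n$ the element $s_m^{-1}$ has leftmost letter $s$ and $t_m^{-1}$ has leftmost letter $t$, while their rightmost letters alternate with the parity of $m$. The structural facts I will exploit are the reduced factorizations $s_k^{-1} = s\,t_{k-1}^{-1}$ and $t_k^{-1} = t\,s_{k-1}^{-1}$ (strip the leftmost letter), which give $T_{s_k^{-1}} = T_s\,T_{t_{k-1}^{-1}}$ and $T_{t_k^{-1}} = T_t\,T_{s_{k-1}^{-1}}$ since lengths add.

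First I would dispose of the boundary case $j = k$ (where $j+k\le n$ reads $2k\le n$) directly. Using the first form of \eqref{eq:varphi},
\[
T_{s_k^{-1}}\widetilde{\varphi}_k = u^{2k}\,T_{s_k^{-1}}T_{s_k^{-1}}^{-1} + T_{s_k^{-1}}T_{t_k} = u^{2k}T_e + T_{s_k^{-1}}T_{t_k}.
\]
The point is that the rightmost letter of $s_k^{-1}$ and the leftmost letter of $t_k$ are always distinct (one is $s$ exactly when the other is $t$), so $s_k^{-1}t_k$ is reduced of length $2k$; since $2k\le n$ no braid relation intervenes, and this element is precisely $s_{2k}^{-1}$. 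Hence $T_{s_k^{-1}}\widetilde{\varphi}_k = u^{2k}T_{s_0^{-1}} + T_{s_{2k}^{-1}}$, as required, and the $t$-identity is its mirror image under the automorphism $s\leftrightarrow t$ (which fixes each $\widetilde{\varphi}_j$ and swaps $s_m \leftrightarrow t_m$).

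For the inductive step I treat $1\le j<k$, assuming both identities at $k-1$. Writing $T_{s_k^{-1}} = T_s\,T_{t_{k-1}^{-1}}$ and applying the $t$-identity at $k-1$ (legitimate since $j\le k-1$ and $j+(k-1)\le n$),
\[
T_{s_k^{-1}}\widetilde{\varphi}_j = T_s\left(u^{2j}T_{t_{k-1-j}^{-1}} + T_{t_{k-1+j}^{-1}}\right).
\]
Because every $t_m^{-1}$ has leftmost letter $t\ne s$, the products $T_s\,T_{t_m^{-1}} = T_{s\,t_m^{-1}} = T_{s_{m+1}^{-1}}$ are length-increasing (using $m+1 = k\pm j\le n$), so the right side collapses to $u^{2j}T_{s_{k-j}^{-1}} + T_{s_{k+j}^{-1}}$. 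The $t$-identity follows symmetrically from $T_{t_k^{-1}} = T_t\,T_{s_{k-1}^{-1}}$ together with the $s$-identity at $k-1$. The induction on $k$ closes because every instance with $j<k$ reduces to level $k-1$ while the instances with $j=k$ were settled directly; the base $k=1$ contains only $j=1=k$.

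The only genuine obstacle is the combinatorial bookkeeping of leftmost and rightmost letters in $W_J$ and the verification that every factorization invoked is reduced, i.e.\ that lengths truly add. This is exactly where the hypothesis $j+k\le n$ is indispensable: it guarantees that all the words produced have length at most $n$, so no braid relation \eqref{eq:dihedralrelation} can shorten them and each product of basis elements is governed by the length-additive rule $T_xT_y=T_{xy}$ rather than the quadratic relation \eqref{eq:quadraticrelation}. Once this length control is in hand, the computation is entirely mechanical.
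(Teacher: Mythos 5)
Your proof is correct, but it is organized differently from the paper's. The paper proves the first identity in a single direct computation: it writes the block factorization $s_k = s^{*}_{j}\, s_{k-j}$ (so $T_{s_k^{-1}} = T_{s_{k-j}^{-1}} T_{{s^{*}}_{j}^{-1}}$ with the length-$j$ block sitting next to $\widetilde{\varphi}_j$), chooses the matching form of \eqref{eq:varphi} so that $T_{{s^{*}}_{j}^{-1}}$ cancels $u^{2j}T_{{s^{*}}_{j}^{-1}}^{-1}$ immediately, and finishes with one application of length-additivity; the second identity then follows from the $T_s \leftrightarrow T_t$ symmetry exactly as in your argument. You instead peel single letters off the left and run an induction on $k$, reserving the direct cancellation for the boundary case $j=k$ (which is precisely the paper's computation specialized to $k-j=0$). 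Both arguments rest on the same two ingredients --- the two-term expression \eqref{eq:varphi} and the fact that $j+k\le n$ keeps every word alternating and reduced, so only the rule $T_xT_y=T_{xy}$ is ever invoked --- and your bookkeeping of leftmost/rightmost letters, the legitimacy of the inductive hypothesis at level $k-1$, and the need to carry both identities through the induction simultaneously are all handled correctly. The paper's block decomposition buys a shorter, induction-free proof and avoids the case split between $j<k$ and $j=k$; your single-letter peeling avoids having to introduce the auxiliary letters $s^{*},t^{*}$ and decide which form of \eqref{eq:varphi} to use except at the boundary, at the cost of an induction whose content is really just reassembling the block factorization one letter at a time.
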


\begin{proof}

Note $j < n$, so
\eqref{eq:varphi} applies to ${\widetilde{\varphi}}_j$.  
Define $s^{*}, t^{*} \in \set{s,t}$
by $s_k = s^{*}_{j} s_{k-j}$ and 
$\set{s^{*},t^{*}} = \set{s,t}$.  Then
\begin{equation*}
\begin{split}
T_{s_k^{-1}} {\widetilde{\varphi}}_j
& = 
T_{s_{k-j}^{-1}} T_{{s^{*}}_{j}^{-1}} 
 \left( u^{2j} T_{{s^{*}}_{j}^{-1}}^{-1} + T_{t^{*}_{j}}\right) \\
& = 
u^{2j} T_{s_{k-j}^{-1}} +
  T_{s_{k-j}^{-1}} T_{{s^{*}}_{j}^{-1}} T_{t^{*}_{j}} 
=  
u^{2j} T_{s_{k-j}^{-1}} + T_{s_{k+j}^{-1}} 
\end{split}
\end{equation*}
since $s_{k-j}^{-1} {s^{*}}_{j}^{-1} t^{*}_j
  = s_{k+j}^{-1}$ and
 $\ell(s_{k-j}^{-1})+\ell( {s^{*}}_{j}^{-1}) + \ell( t^{*}_j)
  = \ell(s_{k+j}^{-1})$.
Thus the first equation holds.
The second
 follows by applying the automorphism $T_s \leftrightarrow T_t$
 of $H_J$ 
 to the first. 
\end{proof}


\begin{lemma}
\label{lemma:etagammadelta}
If $0 \le j \le k$ and $j+k \le n$, then
\begin{align*}
\tag{i}
\quad
T_{s_k^{-1}} {\widetilde{\eta}}_j  
& = 
\sum_{i=0}^{2j} u^{i} T_{s_{k+j-i}^{-1}}  &
\text{and} &&
T_{t_k^{-1}} {\widetilde{\eta}}_j  
 & = 
\sum_{i=0}^{2j} u^{i} T_{t_{k+j-i}^{-1}} ,   \\
\tag{ii}
T_{s_k^{-1}} {\widetilde{\gamma}}_j  
& = 
\sum_{i=0}^{2j} (-u)^{i} T_{s_{k+j-i}^{-1}} &
\text{and} &&
T_{t_k^{-1}} {\widetilde{\gamma}}_j  
 & = 
\sum_{i=0}^{2j} (-u)^{i} T_{t_{k+j-i}^{-1}} , \qquad \\
\tag{iii}
T_{s_k^{-1}} {\widetilde{\delta}}_j
& = 
\sum_{i=0}^{j} u^{2i} T_{s_{k+j-2i}^{-1}}  &
\text{and} &&
T_{t_k^{-1}} {\widetilde{\delta}}_j
 & = 
\sum_{i=0}^{j} u^{2i} T_{t_{k+j-2i}^{-1}} . 
\end{align*}
\end{lemma}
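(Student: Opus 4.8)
The plan is to prove only the left-hand ($s$) identities in (i)--(iii); the right-hand ($t$) identities then follow by applying the automorphism $T_s \leftrightarrow T_t$ of $H_J$, exactly as at the end of the proof of Lemma~\ref{lemma:varphi}. I would first dispose of the trivial case $j = 0$: there $\widetilde{\eta}_0 = \widetilde{\gamma}_0 = \widetilde{\delta}_0 = \widetilde{\varphi}_0 = \sigma_0 = T_e$, so each left-hand side equals $T_{s_k^{-1}}$, which matches the single-term right-hand sums. Hence I may assume $0 < j \le k$ and $j + k \le n$.

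For (i), I would start from the definition $\widetilde{\eta}_j = \sum_{l=0}^{j} u^l \widetilde{\varphi}_{j-l}$ and multiply on the left by $T_{s_k^{-1}}$. The term $l = j$ contributes $u^j T_{s_k^{-1}}$, since $\widetilde{\varphi}_0 = T_e$. For $0 \le l < j$ one has $0 < j-l \le j \le k$ and $(j-l)+k \le j+k \le n$, so Lemma~\ref{lemma:varphi} applies and gives $T_{s_k^{-1}}\widetilde{\varphi}_{j-l} = u^{2(j-l)} T_{s_{k-(j-l)}^{-1}} + T_{s_{k+(j-l)}^{-1}}$. Substituting $r = j - l$ converts the result into $u^j T_{s_k^{-1}} + \sum_{r=1}^{j}\bigl(u^{j+r} T_{s_{k-r}^{-1}} + u^{j-r} T_{s_{k+r}^{-1}}\bigr)$. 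The only delicate step is the final reindexing: setting $i = j-r$ in the $T_{s_{k+r}^{-1}}$ sum and $i = j+r$ in the $T_{s_{k-r}^{-1}}$ sum, and recording the middle term at $i = j$, collects everything into $\sum_{i=0}^{2j} u^i T_{s_{k+j-i}^{-1}}$, as claimed.

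Part (ii) is identical except that $u^l$ is replaced throughout by $(-u)^l$; the only extra point is the sign bookkeeping, namely that $(-u)^{j-r} u^{2r} = (-u)^{j+r}$, which holds because $(-1)^{j-r} = (-1)^{j+r}$. Part (iii) then follows formally from $\widetilde{\delta}_j = \tfrac{1}{2}(\widetilde{\eta}_j + \widetilde{\gamma}_j)$: averaging the formulas of (i) and (ii) gives $\tfrac{1}{2}\sum_{i=0}^{2j}\bigl(u^i + (-u)^i\bigr) T_{s_{k+j-i}^{-1}}$, and since $u^i + (-u)^i$ equals $2u^i$ for even $i$ and $0$ for odd $i$, only the even indices $i = 2i'$ survive, yielding $\sum_{i'=0}^{j} u^{2i'} T_{s_{k+j-2i'}^{-1}}$.

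The argument is thus essentially mechanical once Lemma~\ref{lemma:varphi} is in hand; there is no substantive obstacle, only the risk of an off-by-one or sign slip in the reindexing. I would guard against this by matching the extreme terms ($i = 0$, $i = j$, $i = 2j$) of the target sum against the $r = j$, middle, and $r = j$ contributions, and by confirming the index ranges so that Lemma~\ref{lemma:varphi} is never invoked outside its hypotheses.
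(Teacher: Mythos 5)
Your proposal is correct, but it reaches (i) and (ii) by a different route than the paper. For (i) the paper argues by induction on $k$: when $j\le k-1$ it peels off one generator, writing $T_{s_k^{-1}}=T_sT_{t_{k-1}^{-1}}$ and invoking the inductive hypothesis, and only in the boundary case $j=k$ does it use the recursion $\widetilde{\eta}_k=\widetilde{\varphi}_k+u\,\widetilde{\eta}_{k-1}$ together with Lemma~\ref{lemma:varphi}. You instead expand $\widetilde{\eta}_j=\sum_{l}u^l\widetilde{\varphi}_{j-l}$ and apply Lemma~\ref{lemma:varphi} termwise, which trades the induction for a reindexing $i=j\pm r$; your verification that the hypotheses $0<j-l\le k$ and $(j-l)+k\le n$ hold for every term, and your separate treatment of the $l=j$ term (where $\widetilde{\varphi}_0=T_e$ and the lemma does not apply), are exactly the points that need checking, and they check out. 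For (ii) the paper applies the semilinear ring automorphism $\zeta:u\mapsto -u$ to the formulas of (i), using $\zeta(\widetilde{\eta}_j)=\widetilde{\gamma}_j$; you redo the computation with $(-u)^l$ in place of $u^l$, and your observation that $(-u)^{j-r}u^{2r}=(-u)^{j+r}$ is precisely the sign identity that makes this work. The automorphism trick is slicker and avoids the second computation, while your version is more self-contained. Part (iii) is obtained by averaging in both treatments. Either way the $t$-identities follow from the $T_s\leftrightarrow T_t$ automorphism, as you say.
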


\begin{proof}
Observe that (i) holds when $k=0$
because $\widetilde{\eta}_0 = T_e$.  
Assume $\ell > 0$ and (i) holds when $0 \le k < \ell$.
Suppose $0 \le j \le \ell$ and $j+\ell \le n$.  
If $j \le \ell-1$, then 
\begin{equation*}
T_{s_\ell^{-1}} {\widetilde{\eta}}_j
= 
T_s T_{t_{\ell-1}^{-1}} {\widetilde{\eta}}_j 
= 
T_s \sum_{i=0}^{2j} u^{i} T_{t_{\ell+j-i-1}^{-1}} 
= 
\sum_{i=0}^{2j} u^{i} T_{s_{\ell+j-i}^{-1}}
\end{equation*}
by the induction hypothesis. 
On the other hand, if $j = \ell$, then 
\begin{equation*}
\begin{split}
T_{s_\ell^{-1}} {\widetilde{\eta}}_\ell
& = 
T_{s_\ell^{-1}} ({\widetilde{\varphi}}_\ell + u {\widetilde{\eta}}_{\ell-1}) 
= 
T_{s_\ell^{-1}} {\widetilde{\varphi}}_\ell + u T_s T_{t_{\ell-1}^{-1}} 
     {\widetilde{\eta}}_{\ell-1} \\
& = 
u^{2\ell} T_e + T_{s_{2\ell}^{-1}}
 + u T_s \sum_{i=0}^{2\ell-2} u^{i} T_{t_{2\ell-i-2}^{-1}} \\
& = 
 u^{2\ell} T_e + T_{s_{2\ell}^{-1}} 
   + \sum_{i=0}^{2\ell-2} u^{i+1} T_{s_{2\ell-(i+1)}^{-1}} 
= 
\sum_{\ell=0}^{2\ell} u^{\ell} T_{s_{2\ell-\ell}^{-1}}
\end{split}
\end{equation*}
by Lemma~\ref{lemma:varphi}.
Thus the first equation of (i) holds in the case $k = \ell$.
The second equation of (i) 
 follows in the case $k = \ell$ 
 by applying the automorphism $T_s \leftrightarrow T_t$ 
to  the first equation.  
Thus (i) holds by induction.

Let $\zeta$ be the automorphism of $\rationals(u)$ 
determined by $\zeta(u)=-u$. Extend $\zeta$ to 
a semilinear automorphism of $H_J$  by defining
$\zeta(\sum  \alpha_w T_w)
=
\sum \zeta(\alpha_w) T_w$.  
Then $\zeta({\widetilde{\eta}}_m) = {\widetilde{\gamma}}_m$, and so 
the formulas of (ii) are obtained by applying $\zeta$ to the
formulas of (i).  Finally, (iii) follows by averaging the
formulas of (i) and (ii).
\end{proof}


\section{Proof of Theorem~\ref{theorem:main}}
\label{section:mainproof}

We begin this section by outlining constructions 
due to Lusztig (\cite{lusztigbarop}, 2.4--2.10) 
of $H_J$-modules with bases supporting  $W_J$-digraphs
when $W_J$ is a finite dihedral group.  
The arguments 
given here, which differ somewhat from those in 
\cite{lusztigbarop}, are included for the sake of
completeness.  

Assume $J = \set{s,t}$, $1 < n = n(s,t) < \infinity$.  
When arguing that the $\set{s,t}$-labeled digraphs in
Figures~\ref{fig:main1}--\ref{fig:main8} are
$W_J$-digraphs,  
we may as well assume $n = m$ for
Figures~\ref{fig:main1}--\ref{fig:main3}, 
$n=2m-1$ for Figures~\ref{fig:main4}--\ref{fig:main5}, and
$n=2m-2$ in Figure~\ref{fig:main6}.   
Indeed, if $n$, $n^\prime$ are positive integers and $n$ divides
$n^\prime$, then
\[
\overbrace{ \tau_s \tau_t \cdots}^{n} = 
\overbrace{ \tau_t \tau_s \cdots}^{n}
\quad\text{implies}\quad
\overbrace{ \tau_s \tau_t \cdots}^{n^\prime} = 
\overbrace{ \tau_t \tau_s \cdots}^{n^\prime}.
\]
Put $s^\prime = s$ if $m$ is even, $s^\prime = t$ if $m$ is odd, and 
define $t^\prime$ by 
$\set{s^\prime,t^\prime} = \set{s,t}$.  
We consider cases. 


\begin{case} Figure~\ref{fig:main1}, $n = m \ge 2$. 

Define $\mu_0 = T_e$ and
\begin{equation*}
\begin{cases}
\mu_1  = T_s \mu_0, \mu_2 = T_t \mu_1, \cdots, 
  \mu_{m-1} = T_{s^\prime} \mu_{m-2}, \mu_m = T_{t^\prime} \mu_{m-1}, & \\
\mu_1^\prime = T_t \mu_0, \mu_2^\prime = T_s \mu_1^\prime, \cdots, 
  \mu_{m-1}^\prime = T_{t^\prime} \mu_{m-2}^\prime, 
  \mu_{m} = T_{s^\prime} \mu_{m-1}^\prime. &
\end{cases}
\end{equation*}
Then
\[
\mu_m = T_{s_m} = T_{t_m} = \mu_m^\prime,
\]
and so by Lemma~\ref{lemma:dihedral} 
$X = \set{\mu_0,\mu_1,\dots,\mu_{m-1},\mu_1^\prime,
  \mu_2^\prime,\dots,\mu_m^\prime} = \setof{T_w}{w\in W_J}$  
  supports a $W_J$-digraph.  
This $W_J$-digraph is isomorphic to the $J$-labeled digraph of 
Figure~\ref{fig:main1} via $\mu_j \leftrightarrow \alpha_j$ for
$0 \le j \le m-1$, $\mu_j^\prime \leftrightarrow \beta_j$ for $1 \le j \le m$.
\end{case}


\begin{case} Figure~\ref{fig:main2}, $n = m \ge 2$. 

Let $\nu_0 = T_e$, and define
\begin{equation*}
\begin{cases}
\nu_1 = T^{\circ}_s \nu_0, \nu_2 = T_t \nu_1, \dots, 
   \nu_{m-1} = T_{s^\prime} \nu_{m-2}, 
   \nu_{m} = T_{t^\prime} \nu_{m-1}, & \\
\nu_1^\prime = T_t \nu_0, \nu_2^\prime = T_s \nu_1^\prime, \dots, 
   \nu_{m-1}^\prime = T_{t^\prime} \nu_{m-2}^\prime,
   \nu_{m}^\prime = T_{s^\prime}^\circ \nu_{m-1}^\prime. & 
\end{cases}
\end{equation*}
Then
\begin{equation*}
\begin{split}
\nu_m 
& = 
 T_{t_{m-1}} T^{\circ}_s 
 = (u+1)^{-1} T_{t_{m-1}} (T_s - u)
 = (u+1)^{-1} \left( T_{s_{m}} - u T_{t_{m-1}} \right) \\
&  = 
 (u+1)^{-1} \left( T_{t_{m}} - u T_{t_{m-1}} \right) 
 =
 (u+1)^{-1} \left( T_{s^\prime} T_{t_{m-1}}  -  u T_{t_{m-1}} \right) \\
&  = 
 (u+1)^{-1} (T_{s^\prime} - u) T_{t_{m-1}} 
 = T_{s^\prime}^\circ T_{t_{m-1}} = \nu_m^\prime,
\end{split}
\end{equation*}
so $X = \set{\nu_0,\nu_1,\dots,\nu_{m-1},\nu_1^\prime,
 \nu_2^\prime,\dots,\nu_m^\prime}$ is linearly independent, 
 so is a basis for $H_J$, and
 supports a $W_J$-digraph by Lemma~\ref{lemma:dihedral}.  This
 $W_J$-digraph is isomophic to the $J$-labeled digraph of 
 Figure~\ref{fig:main2} via $\nu_j \leftrightarrow \alpha_j$ for
 $0 \le j \le m-1$, $\nu_j^\prime \leftrightarrow \beta_j$ for
 $1 \le j \le m$.
\end{case}


\begin{case} Figure~\ref{fig:main3}, $n = m \ge 2$. 

Interchanging $s$ and $t$ in the argument
given for the previous case shows
that the $J$-labeled multigraph in Figure~\ref{fig:main3} is
a $W_J$-digraph.  
\end{case}


\begin{case} Figure~\ref{fig:main4}, $n = 2m-1$, $m \ge 2$.

Define an element $\eta_0$ of $H_J$ by
\[
\eta_0
=
\widetilde{\eta}_{m-1}
=
\widetilde{\varphi}_{m-1} + u \widetilde{\varphi}_{m-2}
 + u^2 \widetilde{\varphi}_{m-3} + \cdots + u^{m-1}\widetilde{\varphi}_0 .
\]
Suppose $m$ is even.  Then by part (i)
of Lemma~\ref{lemma:etagammadelta}, 
\begin{equation*}
\begin{split}
T_{t_{m-1}} (T_s-u) \eta_0
& = 
T_{s_{m}} {\widetilde{\eta}}_{m} - u T_{t_{m-1}} {\widetilde{\eta}}_{m-1} 
= 
T_{t_{m}^{-1}} {\widetilde{\eta}}_{m-1} - u T_{t_{m-1}^{-1}} {\widetilde{\eta}}_{m-1} \\
& = 
\sum_{i=0}^{2m-2} u^{i} T_{t_{2m-i-1}^{-1}}
   - u \sum_{i=0}^{2m-2} u^{i} T_{t_{2m-i-2}^{-1}} \\
& = 
T_{t_{2m-1}^{-1}} - u^{2m-1} 
=
T_{w_0} - u^{n}.  
\end{split}
\end{equation*}
On the other hand, if $m$ is odd, 
then
\begin{equation*}
\begin{split}
T_{t_{m-1}} (T_s-u) \eta_0
& = 
T_{s_{m}} {\widetilde{\eta}}_{m-1} - u T_{t_{m-1}} {\widetilde{\eta}}_{m-1} 
= 
T_{s_{m}^{-1}} {\widetilde{\eta}}_{m-1} - u T_{s_{m-1}^{-1}} {\widetilde{\eta}}_{m-1} \\
& = 
\sum_{i=0}^{2m-2} u^{i} T_{s_{2m-i-1}^{-1}}
   - u \sum_{i=0}^{2m-2} u^{i} T_{s_{2m-i-2}^{-1}} \\
& = 
T_{s_{2m-1}^{-1}} - u^{2m-1} 
=
T_{w_0} - u^{n}.
\end{split}
\end{equation*}
Hence 
\[
T_{t_{m-1}} (T_s - u)  \eta_0
=
T_{w_0} - u^{n}
=
T_{s_{m-1}} (T_t - u)  \eta_0,
\]
where the second equation follows by applying the automorphism
$T_s \leftrightarrow T_t$ to the first.
Hence if we define
\begin{equation*}
\begin{cases}
 \eta_1 
=  T^{\circ}_s \eta_0, \eta_2 = T_t \eta_1,
\dots, \eta_{m-1} = T_{s^\prime} \eta_{m-2},
\eta_{m} = T_{t^\prime} \eta_{m-1}, & \\
\eta_1^\prime 
= T_t^\circ \eta_0, \eta_2^\prime = T_s \eta_1^\prime,
\dots, \eta_{m-1}^\prime = T_{t^\prime}\eta_{m-2}^\prime,
\eta_{m}^\prime = T_{s^\prime} \eta_{m-1}^\prime, &
\end{cases}
\end{equation*}
then
\[
\eta_{m} = (u+1)^{-1} (T_{w_0} - u^{n}) = \eta_{m}^\prime .
\]
Therefore 
$X=\set{\eta_0,\eta_1,\eta_2,\dots,\eta_{m-1},
\eta_1^\prime,\eta_2^\prime,\dots,\eta_{m-1}^\prime,\eta_{m}^\prime}$ is
a basis for 
a left ideal in $H_J$ and $X$ supports a $W_J$-digraph
by Lemma~\ref{lemma:dihedral}.
This $W_J$-digraph is isomorphic to the $J$-labeled digraph in
Figure~\ref{fig:main4} via 
$\eta_{j} \leftrightarrow \alpha_{j}$ for $0 \le j \le m-1$,
$\eta_{j}^\prime \leftrightarrow  \beta_j$ for $1 \le j \le m$.
\end{case}


\begin{case} Figure~\ref{fig:main5}, $n = 2m-1$, $m \ge 2$.

Put
\[
\gamma_0
= \widetilde{\gamma}_{m-1}
= \widetilde{\varphi}_{m-1} - u \widetilde{\varphi}_{m-2}
 + u^2 \widetilde{\varphi}_{m-3} \pm \cdots + (-u)^{m-1} \widetilde{\varphi}_0 .
\]
If $m$ is even, then part (ii) of Lemma~\ref{lemma:etagammadelta}
gives
\begin{equation*}
\begin{split}
& (T_{t^\prime} - u) T_{s_{m-1}} \gamma_0 \\
& \qquad
=
(T_{s} - u) T_{s_{m-1}} {\widetilde{\gamma}}_{m-1} 
= 
T_{s_{m}} {\widetilde{\gamma}}_{m-1} - T_{s_{m-1}}{\widetilde{\gamma}}_{m-1} \\
& \qquad
=
T_{t_{m}^{-1}} {\widetilde{\gamma}}_{m-1} - T_{s_{m-1}^{-1}} {\widetilde{\gamma}}_{m-1} 
= 
\sum_{i=0}^{2m-2} (-u)^{i} T_{t_{2m-i-1}^{-1}}
   - u \sum_{i=0}^{2m} (-u)^{i} T_{s_{2m-i-2}^{-1}} \\
& \qquad
=
\sum_{w\in W} (-u)^{n-\ell(w)} T_w.
\end{split}
\end{equation*}
On the other hand, if $m$ is odd, then
\begin{equation*}
\begin{split}
& (T_{t^\prime} - u) T_{s_{m-1}} \gamma_0 \\
& \qquad 
= 
(T_{t} - u) T_{s_{m-1}} {\widetilde{\gamma}}_{m-1} 
= 
T_{s_{m}} {\widetilde{\gamma}}_{m-1} - u T_{s_{m-1}} {\widetilde{\gamma}}_{m-1} \\
& \qquad
=
T_{s_{m}^{-1}} {\widetilde{\gamma}}_{m-1} - u T_{t_{m-1}^{-1}} {\widetilde{\gamma}}_{m-1} 
= 
\sum_{i=0}^{2m-2} (-u)^{i} T_{s_{2m-i-1}^{-1}}
   - u \sum_{i=0}^{2m-2} (-u)^{i} T_{t_{2m-i-2}^{-1}} \\
& \qquad
=
\sum_{w\in W} (-u)^{n-\ell(w)} T_w.
\end{split}
\end{equation*}
Therefore
\[
(T_{t^\prime} - u) T_{s_{m-1}} \gamma_0
= \sum_{w\in W} (-u)^{n-\ell(w)} T_w
= (T_{s^\prime} - u) T_{t_{m-1}} \gamma_0,
\]
with the second equation following from the first by applying the
automorphism $T_s \leftrightarrow T_t$.
Hence if we put 
\begin{equation*}
\begin{cases}
\gamma_1 
= T_s \gamma_0, \gamma_2 = T_t \gamma_1, 
\dots, \gamma_{m-1} = T_{s^\prime} \gamma_{m-2}, 
\gamma_{m} = T_{t^\prime}^\circ \gamma_{m-1}, & \\
\gamma_1^\prime 
=  T_t \gamma_0, \gamma_2^\prime = T_s \gamma_1^\prime, 
\dots, \gamma_{m-1}^\prime = T_{t^\prime}\gamma_{m-2}^\prime, 
\gamma_{m}^\prime = T_{s^\prime}^\circ \gamma_{m-1}^\prime, &
\end{cases}
\end{equation*}
then
\[
\gamma_m
= (u+1)^{-1} \sum_{w\in W} (-u)^{n-\ell(w)} T_w
= \gamma_m^\prime.
\]
Thus  
$X = \set{\gamma_0,\gamma_1,\dots,\gamma_{m-1},\gamma_1^\prime,
\gamma_2^\prime,\dots,\gamma_{m-1}^\prime,\gamma_{m}^\prime}$
is a basis for a left ideal of $H_J$ supporting a $W_J$-digraph.
Moreover, this $W_J$-digraph is isomorphic to the digraph
of Figure~\ref{fig:main5} via
$\gamma_{j}\leftrightarrow \alpha_j$ for $0 \le j \le m-1$,
$\gamma_{j}^\prime \leftrightarrow \beta_j$ for $1 \le j \le m$.
\end{case}


\begin{case} Figure~\ref{fig:main6}, $n = 2m-2$, $m \ge 2$.

Define
\[
\delta_0
=
\widetilde{\delta}_{m-2}
=
\frac{1}{2} (\widetilde{\eta}_{m-2} + \widetilde{\gamma}_{m-2}).
\]
If $m$ is even, then by part (iii) of
Lemma~\ref{lemma:etagammadelta} we have
\begin{equation*}
\begin{split}
(T_{t^\prime}-u) T_{t_{m-2}} (T_s - u) \delta_0 
& = 
(T_{s_m} - u T_{s_{m-1}} - u T_{t_{m-1}} + u^2 T_{t_{m-2}}) {\widetilde{\delta}}_{m-2} \\
& = 
(T_{t_m^{-1}} - u T_{s_{m-1}^{-1}} - u T_{t_{m-1}^{-1}} + u^2 T_{s_{m-2}^{-1}}) {\widetilde{\delta}}_{m-2} \\
& = 
\sum_{i=0}^{m-2} u^{2i} T_{t_{2m-2-2i}^{-1}}
- u \sum_{i=0}^{m-2} u^{2i} T_{s_{2m-3-2i}^{-1}} \\
& \qquad\quad
{} - u \sum_{i=0}^{m-2} u^{2i} T_{t_{2m-3-2i}^{-1}}
+ u^2 \sum_{i=0}^{m-2} u^{2i} T_{s_{2m-4-2i}^{-1}} \\
& = 
\sum_{w \in W} (-u)^{n-\ell(w)} T_w .
\end{split}
\end{equation*}
On the other hand, if $m$ is odd then
\begin{equation*}
\begin{split}
(T_{t^\prime}-u) T_{t_{m-2}} (T_s - u) \delta_0
& = 
(T_{s_m} - u T_{s_{m-1}} - u T_{t_{m-1}} + u^2 T_{t_{m-2}}) {\widetilde{\delta}}_{m-2} \\
& = 
(T_{s_m^{-1}} - u T_{t_{m-1}^{-1}} - u T_{s_{m-1}^{-1}} + u^2 T_{t_{m-2}^{-1}}) {\widetilde{\delta}}_{m-2} \\
& = 
\sum_{i=0}^{m-2} u^{2i} T_{s_{2m-2-2i}^{-1}}
- u \sum_{i=0}^{m-2} u^{2i} T_{t_{2m-3-2i}^{-1}} \\
& \qquad\quad
{} - u \sum_{i=0}^{m-2} u^{2i} T_{s_{2m-3-2i}^{-1}}
+ u^2 \sum_{i=0}^{m-2} u^{2i} T_{t_{2m-4-2i}^{-1}} \\
& = 
\sum_{w \in W} (-u)^{n-\ell(w)} T_w.
\end{split}
\end{equation*}
Therefore
\[
(T_{t^\prime}-u) T_{t_{m-2}} (T_s - u) \delta_0
=
\sum_{w \in W} (-u)^{n-\ell(w)} T_w
=
(T_{s^\prime}-u) T_{s_{m-2}} (T_t - u) \delta_0,
\]
with the second equation following from the first by applying the
automorphism $T_s \leftrightarrow T_t$.
Thus if we define
\begin{equation*}
\begin{cases}
\delta_1 
=  T^{\circ}_s \delta_0,
\delta_2 = T_t \delta_1, \dots, \delta_{m-2} = T_{s^\prime} \delta_{m-1}, 
\delta_{m} = T _{t^\prime}^\circ \delta_{m-1}, & \\
 \delta_1^\prime 
 = T_t^\circ \delta_0,
\delta_2^\prime = T_s \delta_1^\prime, \dots, 
\delta_{m-2}^\prime = T_{t^\prime} \delta_{m-1}^\prime, 
\delta_{m}^\prime = T_{s^\prime}^\circ \delta_{m-1}^\prime, & 
\end{cases}
\end{equation*}
then
\[
\delta_{m}
=
(u+1)^{-2} \sum_{k=0}^{n} (-u)^{n-k} \sigma_k
=
\delta_{m}^\prime .
\]
Hence 
$X = \set{\delta_0,\delta_1,\dots,\delta_{m-1},
\delta_1^\prime,\delta_2^\prime,\dots,\delta_{m}^\prime}$ is a basis
for a left ideal of $H_J$ that supports a $W_J$-digraph.
This $W_J$-digraph is isomorphic to the $J$-labeled
multigraph of Figure~\ref{fig:main6} via
$\delta_j \leftrightarrow \alpha_j$ for $0 \le j \le m-1$, 
$\delta_j^\prime \leftrightarrow \beta_j$ for $1 \le j \le m$.
\end{case}


\begin{case} Figure~\ref{fig:main7} or Figure~\ref{fig:main8}, $m=1$, $n \ge 2$ arbitrary.

Suppose $\Gamma$ is one of the $J$-labeled digraphs of
Figures~\ref{fig:main7}--\ref{fig:main8}.  Then with $M = \linspan \set{\alpha_0,\beta_1}$, 
$T_s$ and $T_t$ induce the same operator $\tau_s = \tau_t$ on $M$.
Thus the relation
\eqref{eq:taudihedralrelation} 
holds automatically, and so
$\Gamma$ is a $W_J$-digraph.
\end{case}


From the constructions above, it follows that
(b) implies (a) in Theorem~\ref{theorem:main}.
To establish the converse, 
we can reduce to the case $S = J = \set{s,t}$, 
$1 < n = n(s,t) < \infinity$, 
$W = W_J$, $H = H_J$, and need only show
that any connected $W$-digraph
 $\Gamma$ is isomorphic to one of the
$J$-labeled digraphs of Figures~\ref{fig:main1}--\ref{fig:main8}, 
with $m$ and $n$ satisfying the appropriate divisibility
conditions. 

Let $X$ be the set of vertices of $\Gamma$, and let
$M = \linspan  X$ be the associated $H$-module.  
If $\alpha \in X$, then $X \subseteq H\alpha$ because
$\Gamma$ is connected, so 
$\abs{X} = \dim M = \dim H\alpha \le \dim H = 2n$.  
Moreover, $\abs{X}$ is even by
Lemma~\ref{lemma:partitions}.  Since every 
vertex of $\Gamma$ is contained in exactly
$\abs{S} = 2$ edges, it follows that
$\Gammaundir$ is a simple cycle of size 
$2m$, where $1 \le m \le n$.

Let $\gamma_0$ be any vertex of $\Gamma$.    
Number the remaining vertices 
$\gamma_1, \gamma_2, \dots, \gamma_{2m-1}$ 
in such a way that
$\Gamma$ has an edge 
from $\gamma_{i-1}$ to $\gamma_i$ or
from $\gamma_{i}$ to $\gamma_{i-1}$ for $1\le i \le 2m-1$.
Put $\gamma_{2m} = \gamma_0$, so 
$\Gamma$ also has an edge from $\gamma_{2m-1}$ to
$\gamma_{2m}$ or from $\gamma_{2m}$ to $\gamma_{2m-1}$. 
We consider the subscript $j$ in $\gamma_j$ as an
integer modulo $2m$.  

Recall the linear characters $\lambda_1 = \ind, \lambda_2 = \sgn: H \rightarrow \rationals(u)$
of $H$ are determined by 
\[
\lambda_1 (T_s) = \lambda_1 (T_t) = u^{2}
\quad\text{and}\quad
\lambda_2 (T_s) = \lambda_2(T_t) = -1.
\]
If $n$ is even, there are two additional linear characters
$\lambda_3, \lambda_4 : H \rightarrow \rationals(u)$ given 
by
\[
\lambda_3(T_s) = u^2, \lambda_3(T_t) = -1
\quad\text{and}\quad
\lambda_4(T_s) = -1, \lambda_4(T_t) = u^2.
\]
It is known that
$H_\complexes = \complexes(u)\! \otimes_{\rationals(u)}\! H$ 
is split semisimple over $\complexes(u)$, any
 irreducible representation of $H_\complexes$ 
 of dimension greater than $1$ is
two-dimensional, and the eigenvalues of
$T_s$ and $T_t$ in any two-dimensional irreducible 
 representation are $-1$ and $u^2$ (see \cite{kilmoyersolomon}, 
 or \cite{geckpfeiffer}, 8.3).  
 Let $m_1$, $m_2$, $m_3$, $m_4$ be the 
 number of summands in a
 direct sum decomposition of 
 $M_\complexes = \complexes(u)\otimes_{\rationals(u)}M$
 into irreducible modules
 that afford
 $\lambda_{1}$, $\lambda_{2}$, $\lambda_3$, $\lambda_4$, respectively
 (with $m_3=m_4=0$ if $n$ is odd), and 
 let $N$ be the number of two-dimensional 
 irreducible summands.
With $P_s$ as in Lemma~\ref{lemma:partitions}, $T_s$ has 
eigenvalues $-1$ and $u^2$ on each subspace 
$\linspan \set{\alpha,\beta}$, $\set{\alpha,\beta}\in P_s$, 
and thus $T_s$ has a total of $m$ eigenvalues
$-1$ and $m$ eigenvalues $u^2$ on $M$.  
Since the same is true
of $T_t$, we must have
\begin{equation*}
m_1 + m_3 + N  = 
m_1 + m_4 + N = 
m_2 + m_3 + N = 
m_2 + m_4 + N = m,
\end{equation*}
and so $m_1 = m_2$ and $m_3 = m_4$.
By Lemma~\ref{lemma:eigenvalues}, the unique one-dimensional
subspace $M_{1}$ of $M$ that affords the character $\lambda_1$ is 
spanned by $v_{1}=\sum_{i=1}^{2m}\gamma_i$, and
thus $m_1 = 1$.  Hence $m_2 = 1$, so there is a unique
one-dimensional subspace $M_{2}$ of $M$ affording $\lambda_{2}$.
Let $v_{2} = \sum_{i=1}^{2m} \zeta_i \gamma_i$ be a nonzero
element of $M_{2}$. 
By Lemma~\ref{lemma:eigenvalues}, we have
\[
\zeta_i
=
\begin{cases}
-\dfrac{1}{u^2}\zeta_{i-1} & 
\text{if \solidedge{$\gamma_{i-1}$}{$\gamma_{i}$}{$s$} 
or \solidedge{$\gamma_{i-1}$}{$\gamma_{i}$}{$t$} is an edge of $\Gamma$,} \\
-u^{2}\zeta_{i-1} & 
\text{if \reversedsolidedge{$\gamma_{i-1}$}{$\gamma_{i}$}{$s$}
or \reversedsolidedge{$\gamma_{i-1}$}{$\gamma_{i}$}{$t$} is an edge of $\Gamma$,} \\
- \dfrac{u+1}{u^2-u}  \zeta_{i-1} &
\text{if \dashededge{$\gamma_{i-1}$}{$\gamma_{i}$}{$s$}
or \dashededge{$\gamma_{i-1}$}{$\gamma_{i}$}{$t$} is an edge of $\Gamma$,} \\
- \dfrac{u^2-u}{u+1}  \zeta_{i-1} &
\text{if \reverseddashededge{$\gamma_{i-1}$}{$\gamma_{i}$}{$s$}
or \reverseddashededge{$\gamma_{i-1}$}{$\gamma_{i}$}{$t$} is an edge of $\Gamma$.} 
\end{cases}
\]
for $1 \le i \le 2m$.
If $m = 1$, then it follows that the edge joining $\gamma_0$
and $\gamma_1$ labeled $s$ must have the same type and
direction as  the edge joining $\gamma_0$
and $\gamma_1$ labeled $t$, and so $\Gamma$ is 
isomorphic to one of the
$J$-labeled digraphs of Figure~\ref{fig:main7}--\ref{fig:main8}.  We assume
$m \ge 2$ for the remainder of the proof, so there is a unique
edge joining $\gamma_i$ to $\gamma_{i-1}$ for $1\le i \le 2m$.
Further, 
\[
\zeta_{0} = \zeta_{2m}
= \zeta_0 \prod_{i=1}^{2m} \frac{\zeta_i}{\zeta_{i-1}},
\]
and so 
$\prod_{i=1}^{2m} (\zeta_i / \zeta_{i-1}) = 1$.
It follows that the number of edges of type
\solidedge{$\gamma_{i-1}$}{$\gamma_{i}$}{} 
(labeled either $s$ or $t$)
 is equal to the number of edges of type
\reversedsolidedge{$\gamma_{i-1}$}{$\gamma_{i}$}{}, $1 \le i \le 2m$, 
and the number of edges of type
\dashededge{$\gamma_{i-1}$}{$\gamma_{i}$}{}
is equal to the number of edges of type
\reverseddashededge{$\gamma_{i-1}$}{$\gamma_{i}$}{}, $1 \le i \le 2m$.  

Next, we compute the coefficient $\kappa_j$ 
of $\gamma_j$ in the expression for $T_s T_t \gamma_j$ 
as a linear combination of $\set{\gamma_1,\dots,\gamma_{2m}}$.
These coefficients are given in Table~\ref{tab:traces},
which is organized according to the types of edges
joining $\gamma_j$ to the adjacent vertices
$\delta$, $\varepsilon$ in $\Gamma$.  (Either
$\delta = \gamma_{j-1}$ and $\varepsilon=\gamma_{j+1}$ or
$\delta = \gamma_{j+1}$ and $\varepsilon=\gamma_{j-1}$:
the coefficient $\kappa_j$ is the same in either case.) 
\begin{table}[ht]
\setlength{\tabcolsep}{1pt}
\caption{}%
\label{tab:traces}%
\begin{minipage}{0.43\textwidth}%
\centering%
\begin{tabular}{@{}cc@{}}%
\text{edges in $\Gamma$} & \text{coefficient $\kappa_j$} \\
\hline%
\solidrightsolidright{$\delta$}{$\gamma_j$}{$s$}{$\varepsilon$}{$t$}& $0$ \\
\solidrightdashedright{$\delta$}{$\gamma_j$}{$s$}{$\varepsilon$}{$t$}& $u(u^2-1)$ \\
\dashedrightsolidright{$\delta$}{$\gamma_j$}{$s$}{$\varepsilon$}{$t$}&  $0$ \\
\dashedrightdashedright{$\delta$}{$\gamma_j$}{$s$}{$\varepsilon$}{$t$}&  $u(u^2-u-1)$ \\
\solidleftsolidleft{$\delta$}{$\gamma_j$}{$s$}{$\varepsilon$}{$t$}&  $0$ \\
\solidleftdashedleft{$\delta$}{$\gamma_j$}{$s$}{$\varepsilon$}{$t$}& $0$ \\
\dashedleftsolidleft{$\delta$}{$\gamma_j$}{$s$}{$\varepsilon$}{$t$}& $u(u^2-1)$ \\
\dashedleftdashedleft{$\delta$}{$\gamma_j$}{$s$}{$\varepsilon$}{$t$}&  $u(u^2-u-1)$
\end{tabular}%
\end{minipage}%
\begin{minipage}{0.49\textwidth}%
\centering%
\begin{tabular}{@{}cc@{}}%
\text{edges in $\Gamma$} & \text{coefficient $\kappa_j$} \\
\hline%
\solidleftsolidright{$\delta$}{$\gamma_j$}{$s$}{$\varepsilon$}{$t$} & $0$ \\
\solidleftdashedright{$\delta$}{$\gamma_j$}{$s$}{$\varepsilon$}{$t$} & $0$ \\
\dashedleftsolidright{$\delta$}{$\gamma_j$}{$s$}{$\varepsilon$}{$t$} &  $0$ \\
\dashedleftdashedright{$\delta$}{$\gamma_j$}{$s$}{$\varepsilon$}{$t$} & $u^2$ \\
\solidrightsolidleft{$\delta$}{$\gamma_j$}{$s$}{$\varepsilon$}{$t$} & $(u^2-1)^2$ \\
\solidrightdashedleft{$\delta$}{$\gamma_j$}{$s$}{$\varepsilon$}{$t$} & $(u^2-1)(u^2-u-1)$ \\
\dashedrightsolidleft{$\delta$}{$\gamma_j$}{$s$}{$\varepsilon$}{$t$} & $(u^2-1)(u^2-u-1)$ \\
\dashedrightdashedleft{$\delta$}{$\gamma_j$}{$s$}{$\varepsilon$}{$t$} &  $(u^2-u-1)^2$ 
\end{tabular}
\end{minipage}%
\end{table}\noindent
The entries of this table are easily verified. 
For example, 
if \dashedleftdashedright{$\gamma_{j-1}$}{$\gamma_j$}{$s$}{$\gamma_{j+1}$}{$t$}
are edges in $\Gamma$, then
\[
T_s T_t \gamma_j
=T_s (u \gamma_j + (u+1)\gamma_{j+1})
= u (u \gamma_j + (u+1)\gamma_{j-1}) + (u+1) T_s \gamma_{j+1},
\]
and so $\kappa_j = u^2$ since
$T_s \gamma_{j+1} \in \linspan \set{\gamma_{j+1},\gamma_{j+2}}$. 
On the other hand, if $\Gamma$ has edges
\solidrightdashedright{$\gamma_{j+1}$}{$\gamma_j$}{$s$}{$\gamma_{j-1}$}{$t$}, 
then
\[
T_s T_t \gamma_j
=
T_s (u \gamma_{j+1} + (u+1)\gamma_{j-1})
=
u ( u^2 \gamma_{j+1} + (u^2-1)\gamma_j) + (u+1)T_s \gamma_{j-1},
\]
and so $\kappa_j = u(u^2-1)$ because
$T_s \gamma_{j-1} \in \linspan \set{\gamma_{j-1},\gamma_{j-2}}$.

From Table~\ref{tab:traces} we see that the
constant term in the trace 
$\tr (T_s T_t) = \sum_{j=1}^{2m} \kappa_j$ is equal to 
the number of sinks in $\Gamma$.
However, $T_s T_t$ has values $u^4$ and $1$
under $\lambda_{1}$ and $\lambda_{2}$, respectively, and value
$-u^2$ under both $\lambda_3$ and $\lambda_4$ if $n$ is even.
Also, $T_s T_t$ has eigenvalues of the form 
$e^{\imath \theta} u^2$, $e^{-\imath \theta} u^2$ in any two-dimensional
irreducible representation of $H_\complexes$, where 
$e^{\imath \theta}$ is a complex $n$th root of unity (\cite{kilmoyersolomon}, Theorem~2, 
or  \cite{geckpfeiffer}, Theorem~8.3.1).  
Therefore the constant term of $\tr (T_s T_t)$ 
is $m_2 = 1$.  Hence $\Gamma$ has a unique sink $\beta$, and so also
a unique source $\alpha$.  

Renumbering the vertices if necessary, we can assume that 
$\gamma_0 = \alpha$. 
Since $\Gamma$ has a unique sink $\beta$ 
and the number of edges of type 
\solidedge{$\gamma_{i-1}$}{$\gamma_{i}$}{} 
 is equal to the number of edges of type
\reversedsolidedge{$\gamma_{i-1}$}{$\gamma_{i}$}{}, $1 \le i \le 2m$, 
and the number of edges of type
\dashededge{$\gamma_{i-1}$}{$\gamma_{i}$}{}
is equal to the number of edges of type
\reverseddashededge{$\gamma_{i-1}$}{$\gamma_{i}$}{}, $1 \le i \le 2m$, 
it follows that
$\beta = \gamma_m$ is opposite to $\alpha$. 

Renumbering the vertices if needed, we can assume
that $\gamma_0$ and $\gamma_1$ 
are connected by an edge labeled $s$.  Define
$\gamma^\prime_j = \gamma_{2m-j}$ for $0 \le j \le m$, so
$\beta=\gamma^\prime_m$.
Then $\Gammaarrow$ has the
form shown in Figure~\ref{fig:reghor}.  
(Since each edge of $\Gammaarrow$ arises from either a
solid or a dashed edge in $\Gamma$, there are $2^{2m}$ 
possible $J$-labeled digraphs $\Gamma$ with this configuration.)
\begin{figure}[ht]
\centering
{%
\begin{tikzpicture}
[node distance=0.6cm,%
pre/.style={<-,shorten <=1pt,>=angle 45},%
post/.style={->,shorten >=1pt,>=angle 45}];%
rectangle/.style={inner sep=0pt,minimum size=5mm}];%
\node[rectangle]	(left)		{$\gamma_0$};
\node[rectangle]	(top1)	[above=of left,xshift=10mm,yshift=-2mm]	{$\gamma_1$}
	edge		[pre]		node[yshift=2.5mm,xshift=-2.5mm]	{$s$}		(left);
\node[rectangle]	(top2)	[right=of top1]			{$\gamma_2$}
	edge		[pre]			node[yshift=2mm]	{$t$}		(top1);
\node		(topdots)		[right=of top2]		{$\cdots$}
	edge		[pre]		node[yshift=2mm]	{$s$}			(top2);
\node[rectangle]	(top3)	[right=of topdots]			{$\gamma_{m-2}$}
	edge		[pre]		node[yshift=2mm]	{$t^\prime$}	(topdots);
\node[rectangle]	(top4)	[right=of top3]		{$\gamma_{m-1}$}
	edge		[pre]		node[yshift=2mm]	{$s^\prime$}	(top3);		
\node[rectangle]	(bottom1)	[below=of left,xshift=10mm,yshift=2mm]	{$\gamma^\prime_{1}$}
	edge		[pre]		node[yshift=-2.5mm,xshift=-2.5mm]	{$t$}		(left);
\node[rectangle]	(bottom2)	[right=of bottom1]			{$\gamma^\prime_{2}$}
	edge		[pre]			node[yshift=-2mm]	{$s$}		(bottom1);
\node		(bottomdots)		[right=of bottom2]		{$\cdots$}
	edge		[pre]		node[yshift=-2mm]	{$t$}			(bottom2);
\node[rectangle]	(bottom3)	[right=of bottomdots]			{$\gamma^\prime_{m-2}$}
	edge		[pre]		node[yshift=-2mm]	{$s^\prime$}	(bottomdots);
\node[rectangle]	(bottom4)	[right=of bottom3]		{$\gamma^\prime_{m-1}$}
	edge		[pre]		node[yshift=-2mm]	{$t^\prime$}	(bottom3);
\node[rectangle]	(right)	[below=of top4,xshift=10mm,yshift=3mm]  {$\gamma^\prime_{m}$}
	edge		[pre]		node[xshift=2.5mm,yshift=2.5mm]	{$t^\prime$}	(top4)
	edge		[pre]		node[xshift=2.5mm,yshift=-2.5mm]	{$s^\prime$}	(bottom4);
\end{tikzpicture}
}%
\caption{$\Gammaarrow$}
\label{fig:reghor}
\end{figure}

From the discussion above, 
we know that the number of edges in $\Gamma$ of type
\dashededge{$\gamma_{i-1}$}{$\gamma_{i}$}{} (labeled either $s$ or $t$), $1 \le i \le m$, 
 is equal to the 
number of edges of type 
\dashededge{$\gamma^\prime_{i-1}$}{$\gamma^\prime_{i}$}{}, $1 \le i \le m$.
Also, from the description of the eigenvalues of
$T_s T_t$ above, $\tr (T_s T_t)$ must be an
even function of $u$.  
Let $N_1$ be the number of edges of the form 
\dashededge{$\xi$}{$\omega$}{} 
with $\xi$ not a source, that is, $\xi \ne \alpha = \gamma_0$, and 
let $N_2$ be the number of edges 
\dashededge{$\xi$}{$\omega$}{} with
$\omega$ a sink, that is, $\omega = \beta = \gamma_m^\prime$.
Then from Table~\ref{tab:traces}, the coefficient of $u^3$ 
in $\tr(T_s T_t)$ is $N_1 - N_2$, 
and hence $N_1 = N_2$. 
Therefore any edge of type
\dashededge{$\xi$}{$\omega$}{} 
that does not begin at $\gamma_0$ must end at $\gamma_m^\prime$.
Hence $\Gamma$ is isomorphic to one of the $J$-labeled
digraphs in Figures~\ref{fig:main1}--\ref{fig:main6}
via $\gamma_j \leftrightarrow \alpha_j$, $0 \le j \le m-1$, 
$\gamma^\prime_j \leftrightarrow \beta_j$, $1 \le j \le m$. 

Finally, let $\tau_s$ and $\tau_t$ be as in \eqref{eq:taudefinition},  
and let 
$\widetilde{A}_{s}(u), \widetilde{A}_{t}(u)$ be the
$(2m)\times(2m)$ matrices over $\rationals[u]$ 
representing $\tau_s$ and $\tau_t$ with
respect to the basis $X$ for $M$.  
Put
$A_s = \widetilde{A}_{s}(1)$, $A_t=\widetilde{A}_{t}(1)$.  Then 
\[
A_s^2 = I = A_t^2,
\qquad
\overbrace{\cdots A_t A_s}^{n}
=
\overbrace{\cdots A_s A_t}^{n},
\]
by \eqref{eq:tauquadraticrelation}, \eqref{eq:taudihedralrelation}, 
and so
$s \mapsto A_{s}$, $t\mapsto A_{t}$ extends to a 
 representation of groups $W_J \rightarrow \GL(2m,\rationals)$. 
One checks that the characteristic polynomial
of the matrix $A_{st} = A_{s} A_{t}$ representing $st$ is as
given in Table~\ref{tab:polys}.
\begin{table}[ht]
\centering
\caption{}%
\label{tab:polys}
\begin{tabular}{ccc}
$W$-digraph & \quad & characteristic polynomial of $A_{st}$ \\
\hline 
Figure~\ref{fig:main1}, Figure~\ref{fig:main2}, Figure~\ref{fig:main3} 
 && $(x^m-1)^2$  \\
Figure~\ref{fig:main4}, Figure~\ref{fig:main5} 
 && $(x-1)(x^{2m-1}-1)$ \\
Figure~\ref{fig:main6}
 && $(x-1)^2 (x^{m-1}+1)^2$  
\end{tabular}
\end{table}
Hence the order of $A_{st}$ as an element of $\GL(2m,\rationals)$ 
is $m$ in the case of
Figures~\ref{fig:main1}--\ref{fig:main3}, $2m-1$ in the case
of Figures~\ref{fig:main4}--\ref{fig:main5}, and
$2m-2$ in the case of Figure~\ref{fig:main6}.  Since this
order must divide $n$, 
the proof is complete. 


\begin{proof}[Proof of Corollary~\ref{corollary:reversed}]
Let $(W,S)$ be a Coxeter system, and let
 $\Gamma$ be an $S$-labeled digraph.  
 For $J = \set{s,t} $
and $1 < n < \infinity$, denote by 
 ${\mathcal F}_{J,n}$ the
collection of all $J$-labeled digraphs $C$ of 
Figures~\ref{fig:main1}--\ref{fig:main8} \ 
for which $m = \abs{\vertices(C)} / 2$ satisfies the 
divisibility conditions of Theorem~\ref{theorem:main}.  
Then 
 $\Gamma$ is a $W$-digraph if and only if
 whenever  $J = \set{s,t} \subseteq S$
 with $1 < n = n(s,t) < \infinity$, each 
 connected component of $\Gamma_J$ is isomorphic to an 
 element of ${\mathcal F}_{J,n}$.  
It is easily seen that ${\mathcal F}_{J,n}$ is 
invariant under $C \mapsto C_{\text{rev}}$.  
Also, 
$C$ is a connected component of $\Gamma_J$
if and only if 
 $C_{\text{rev}}$ is a connected component of $(\Gamma_J)_{\text{rev}}$. 
The assertion of the corollary follows.
\end{proof}


Let $w \mapsto w^{*}$ be an involutory automorphism of $(W,S)$, and
let $I_{*}= \setof{x \in W}{x^* = x^{-1}}$ be the set of twisted involutions
of $W$.  By Lusztig \cite{lusztigbarop}, Theorem~0.1,
there is an $H$-module
$M_{*}$ with basis $X = \setof{m_w}{w\in I_{*}}$ and
$H$-action determined by
\begin{equation*}
T_s m_w
=
\begin{cases}
m_{sws^*} & \text{if $sw \ne ws^* > w$}, \cr
(u^2-1) m_{w} + u^2 m_{sws^*} & \text{if $sw \ne ws^* < w$}, \cr
u m_w + (u+1) m_{sw} & \text{if $sw = ws^* > w$}, \cr
(u^2-u-1) m_w + (u^2-u) m_{sw} & \text{if $sw=w^* < w$}. \cr
\end{cases}
\end{equation*}
(Here $<$ and $>$ refer to the usual Bruhat order on $(W,S)$.) 
The basis $X$ for $M_{*}$ then affords the $W$-digraph
$\Gamma_{*}$ defined by 
\begin{equation*}
\solidedge{$m_{w}$}{$m_{sws^*}$}{$s$} \in \edges(\Gamma_*)
 \iff sw \ne ws^* > w
\end{equation*}
and
\begin{equation*}
\dashededge{$m_w$}{$m_{sw}$}{$s$} \in \edges(\Gamma_*)
 \iff sw = ws^* > w.
 \end{equation*}


\begin{theorem}
\label{theorem:involutions}
Let $(W,S)$ be finite with longest element $w_0$, and let
 $\Gamma_*$ be the $W$-digraph corresponding
 to the involutory automorphism $w \mapsto w^*$ of $(W,S)$.  Then
$(\Gamma_*)_{\text{rev}}$ is isomorphic
to the $W$-digraph $\Gamma_{\#}$ corresponding to the
automorphism 
$w \mapsto w^{\#} = w_0 w^* w_0$ of $(W,S)$
via the bijection sending $m_x \in \vertices(\Gamma_*)$
to $m_{xw_0} \in \vertices(\Gamma_{\#})$. 
\end{theorem}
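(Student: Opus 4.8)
The plan is to show that the stated vertex bijection $\varphi\colon m_x \mapsto m_{xw_0}$ carries $(\Gamma_*)_{\text{rev}}$ onto $\Gamma_{\#}$ edge by edge. First I would record the three facts about $w_0$ that drive everything: since $*$ is an automorphism of $(W,S)$ it preserves $\ell$, hence fixes the unique longest element, so $w_0^* = w_0$; also $w_0^2 = e$; and $\ell(xw_0) = \ell(w_0) - \ell(x)$ for all $x \in W$. Using $w_0^* = w_0$ one checks that $x \in I_*$ gives $(xw_0)^{\#} = w_0 x^* w_0 \cdot w_0 = w_0 x^* = w_0 x^{-1} = (xw_0)^{-1}$, so $x \mapsto xw_0$ is a bijection $I_* \to I_{\#}$ and $\varphi$ is a well-defined bijection on vertices. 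The setup is symmetric in $*$ and $\#$, since $w^* = w_0 w^{\#} w_0$ as well.

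Next I would use the fact that in both $\Gamma_*$ and $\Gamma_{\#}$ every edge points from the shorter to the longer of its two endpoints, so it suffices to match ascending edges: fix $w \in I_*$ and $s \in S$ for which the $s$-action at $m_w$ raises length, and track the resulting edge. Suppose first that $sw \ne ws^*$ with $\ell(sws^*) > \ell(w)$, so $\Gamma_*$ has the solid edge $m_w \to m_{w'}$ with $w' = sws^*$. I would then analyze the $s$-action of $\Gamma_{\#}$ at $\varphi(m_{w'}) = m_{w'w_0}$. Writing $v = w'w_0$ and $s^{\#} = w_0 s^* w_0$, the identities $sw' = ws^*$ and $w's^* = sw$ give $sv = vs^{\#} \iff sw' = w's^* \iff sw = ws^*$, so the commuting/non-commuting dichotomy transfers and we are again in the solid case; moreover $svs^{\#} = sw's^* w_0 = w w_0$, and $\ell(v) = \ell(w_0) - \ell(w') < \ell(w_0) - \ell(w) = \ell(ww_0)$ shows the edge is ascending. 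Thus $\Gamma_{\#}$ has the solid edge $m_{w'w_0} \to m_{ww_0}$, which is exactly the $\varphi$-image of the reversed edge $m_{w'} \to m_w$ of $(\Gamma_*)_{\text{rev}}$.

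The dashed case is parallel: if $sw = ws^*$ with $\ell(sw) > \ell(w)$ and $w' = sw$, then $sw' = w's^* = w$ forces the commuting case at $v = w'w_0$, the new element is $sv = ww_0$, and the same length complementation makes the edge ascending, yielding the dashed edge $m_{w'w_0} \to m_{ww_0}$ of $\Gamma_{\#}$ matching the reversed dashed edge of $\Gamma_*$. Having matched types, labels and directions, I would finish by noting that $\varphi$ is a vertex bijection and that, in any $S$-labeled digraph, each vertex lies in exactly one $s$-edge; hence the induced map on $s$-edges is a bijection for every $s$, no edge of $\Gamma_{\#}$ is missed, and $\varphi$ is an isomorphism $(\Gamma_*)_{\text{rev}} \to \Gamma_{\#}$.

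I expect the only real obstacle to be the bookkeeping in the middle step: one must verify simultaneously that the twist by $w_0$ preserves the $sw = ws^*$ dichotomy (so solid edges stay solid and dashed stay dashed), correctly identifies the new vertex $svs^{\#}$ (resp.\ $sv$) as $ww_0$, and reverses the length inequality via $\ell(\,\cdot\, w_0) = \ell(w_0) - \ell(\,\cdot\,)$. Each of these is a short group-theoretic computation, but they must be kept consistent with the edge-direction convention of Definition~\ref{definition:wdigraph}, and this is where care is needed.
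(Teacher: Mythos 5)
Your proposal is correct and follows essentially the same route as the paper: both rest on $w_0^* = w_0$, the identification $svs^{\#} = ww_0$ (resp.\ $sv = ww_0$), the transfer of the $sw = ws^*$ dichotomy under the twist by $w_0$, and the reversal of the length/Bruhat comparison under right multiplication by $w_0$. The only cosmetic difference is that the paper phrases the edge correspondence as a single chain of biconditionals, whereas you argue one direction and then invoke the perfect-matching property of $s$-edges to conclude surjectivity; both are valid.
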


\begin{proof}  
Observe $w_0^* = w_0$ since
$w \mapsto w^*$ preserves lengths.
Suppose $x \in I_*$, so $x^{*} = x^{-1}$.
Then $x w_0 \in I_{\#}$ because
\[
(x w_0)^{\#} =
w_0 (x w_0)^{*} w_0
=
w_0 x^{*} w_0 w_0 
=
w_0 x^{-1} = (x w_0)^{-1} .
\]  
Likewise, if $x w_0 \in I_{\#}$, then $x \in I_*$. 
If $x, y \in I_*$ and $s \in S$, then
\begin{equation*}
\begin{split}
\solidedge{$m_{x}$}{$m_{y}$}{$s$} \in \edges(\Gamma_*)
& \iff
x < y = s x s^* \\
& \iff
y w_0 < x w_0 = (s y s^*) w_0 
= s (y w_0) (w_0 s^* w_0) = s (y w_0) s^{\#} \\
& \iff
\solidedge{$m_{yw_0}$}{$m_{xw_0}$}{$s$} \in \edges(\Gamma_{\#}),
\end{split}
\end{equation*}
and 
\begin{equation*}
\begin{split}
\dashededge{$m_{x}$}{$m_{y}$}{$s$} \in \edges(\Gamma_*)
& \iff
x < y = s x = x s^* \\
& \iff
y w_0 < x w_0 = (s y) w_0 = s (y w_0) = (y s^*) w_0 = (y w_0) s^{\#}
 \\
& \iff
\dashededge{$m_{yw_0}$}{$m_{xw_0}$}{$s$} \in \edges(\Gamma_{\#}).
\end{split}
\end{equation*}
Therefore $(\Gamma_*)_{\text{rev}}$ is isomorphic to $\Gamma_{\#}$ 
via the bijection $m_{x} \mapsto m_{x w_0}$ on vertices.  
\end{proof}


\begin{corollary}
If $w_0$ is central in $W$, then $(\Gamma_*)_{\text{rev}}$ is isomorphic
to $\Gamma_*$.
\end{corollary}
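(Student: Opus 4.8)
The plan is to reduce the corollary to Theorem~\ref{theorem:involutions} by checking that the hypothesis that $w_0$ is central collapses the twisted automorphism $\#$ onto $*$, so that $\Gamma_{\#}$ and $\Gamma_*$ literally coincide.

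First I would invoke the standard fact that the longest element $w_0$ of a finite Coxeter group is an involution, so that $w_0^2 = e$. Using this together with the centrality of $w_0$, I would compute, for an arbitrary $w \in W$,
\[
w^{\#} = w_0 w^* w_0 = w^* w_0^2 = w^*,
\]
where the middle equality uses that $w_0$ commutes with $w^*$. Thus the automorphism $w \mapsto w^{\#}$ agrees with $w \mapsto w^*$ on all of $W$. In particular the twisted involution sets coincide, $I_{\#} = I_*$, and the edge rules defining $\Gamma_{\#}$ (stated just before Theorem~\ref{theorem:involutions}) reduce verbatim to those defining $\Gamma_*$; hence $\Gamma_{\#} = \Gamma_*$ as $S$-labeled digraphs.

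Finally I would apply Theorem~\ref{theorem:involutions}, which asserts that $(\Gamma_*)_{\text{rev}}$ is isomorphic to $\Gamma_{\#}$. Substituting $\Gamma_{\#} = \Gamma_*$ gives $(\Gamma_*)_{\text{rev}} \cong \Gamma_*$, completing the argument. I do not expect any genuine obstacle here: the single substantive ingredient is the elementary observation that centrality of $w_0$, combined with $w_0^2 = e$, forces $\# = *$, after which the conclusion is an immediate specialization of the theorem already established.
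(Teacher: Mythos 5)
Your proposal is correct and is precisely the argument the paper leaves implicit: centrality of $w_0$ together with $w_0^2 = e$ forces $w^{\#} = w_0 w^* w_0 = w^*$, so $\Gamma_{\#} = \Gamma_*$ and Theorem~\ref{theorem:involutions} gives $(\Gamma_*)_{\text{rev}} \cong \Gamma_*$. No issues.
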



\begin{example}
Suppose
$W = \spanof{r,s,t}$ with
$n(r,s)=n(s,t)=3$, $n(r,t)=2$
and $w^* = w$ for $w \in W$, so
 $I_*$ is the
set of involutions in $W$ (including $e$).  The corresponding
$W$-digraph $\Gamma_*$ is shown in Figure~\ref{fig:a3trivial}.
(The vertices are labeled $x$ rather than
$m_x$ for $x \in I_*$.)
If
$w \mapsto w^{\#}$ is 
the nonidentity graph automorphism of $W$,
the corresponding
$W$-digraph $\Gamma_{\#}$ is as shown 
in Figure~\ref{fig:a3twisted}.
Note $(\Gamma_*)_{\text{rev}} \cong \Gamma_{\#}$.  
\begin{figure}[ht]
\begin{minipage}{1.0\textwidth}
\begin{minipage}{0.5\textwidth}
\centering
\begin{tikzpicture}
[node distance=0.5cm,%
pre/.style={<-,shorten <=1pt,>=angle 45},%
post/.style={->,shorten >=1pt,>=angle 45}];%
rectangle/.style={inner sep=0pt,minimum size=5mm}];%
\node[rectangle]	(e)								{$e$};
\node[rectangle]	(s)	[above=of e]					{$s$}
	edge		[pre,dashed]	node[xshift=2mm]			{$s$}		(e);
\node[rectangle]	(r)	[below=of e,xshift=-10mm]		{$r$}
	edge		[pre,dashed]	node[yshift=2mm,xshift=-2mm]	{$r$}		(e);
\node[rectangle]	(t)	[below=of e,xshift=10mm]			{$t$}
	edge		[pre,dashed]	node[yshift=2mm,xshift=2mm]	{$t$}		(e);
\node[rectangle]	(rt)	[below=of r,xshift=10mm]			{$rt$}
	edge		[pre,dashed]	node[yshift=-2mm,xshift=-2mm]	{$t$}		(r)
	edge		[pre,dashed]	node[yshift=-2mm,xshift=2mm]	{$r$}		(t);
\node[rectangle]	(srs)	[left=of r,xshift=-2mm]			{$srs$}
	edge		[pre,bend left=10] node[yshift=2mm,xshift=-2mm] {$r$}	(s)
	edge		[pre]			node[yshift=2.5mm]			{$s$}		(r);
\node[rectangle]	(sts)	[right=of t,xshift=2mm]			{$sts$}
	edge		[pre,bend right=10] node[yshift=2mm,xshift=2mm] {$t$}	(s)
	edge		[pre]			node[yshift=2.5mm]			{$s$}		(t);
\node[rectangle]	(srts)	[below=of rt]					{$srts$}
	edge		[pre]			node[xshift=2.5mm]			{$s$}		(rt);
\node[rectangle]	(w0)		[below=of srts,yshift=-3mm]	{$w_0$}
	edge		[pre,bend left]	node[xshift=-2.5mm]			{$t$}		(srts)
	edge		[pre,bend right]	node[xshift=2.5mm]			{$r$}		(srts);
\node[rectangle]	(rtstr)	[below=of w0,yshift=-3mm]	{$rtstr$}
	edge		[pre, bend left=15]	node[xshift=-2mm,yshift=-2mm]	{$t$}	(srs)
	edge		[pre, bend right=15]	node[xshift=2mm,yshift=-2mm]	{$r$}	(sts)
	edge		[post,dashed]	node[xshift=2mm]			{$s$}		(w0);
\end{tikzpicture}
\caption{$\Gamma_*$ for $W(A_3)$}
\label{fig:a3trivial}
\end{minipage}%
\begin{minipage}{0.5\textwidth}
\centering
\begin{tikzpicture}
[node distance=0.5cm,%
pre/.style={<-,shorten <=1pt,>=angle 45},%
post/.style={->,shorten >=1pt,>=angle 45}];%
rectangle/.style={inner sep=0pt,minimum size=5mm}];%
\node[rectangle]	(e)								{$e$};
\node[rectangle]	(s)	[above=of e,yshift=2mm]			{$s$}
	edge		[pre,dashed]	node[xshift=2mm]			{$s$}		(e);
\node[rectangle]	(rt)	[below=of e,yshift=-2mm]			{$rt$}
	edge		[pre,bend left]	node[xshift=-2.5mm]			{$r$}		(e)
	edge		[pre,bend right]	node[xshift=2.5mm]			{$t$}		(e);
\node[rectangle]	(srts)	[below=of rt]					{$srts$}
	edge		[pre]			node[xshift=2.5mm]			{$s$}		(rt);
\node[rectangle]	(rsrts) [below=of srts,xshift=-10mm]		{$rsrts$}
	edge		[pre,dashed]	node[xshift=-2mm,yshift=2mm]	{$r$}		(srts);
\node[rectangle]	(rst)	[left=of rsrts]			{$rst$}
	edge		[post]		node[yshift=2.5mm]			{$s$}		(rsrts)
	edge		[pre,bend left=15] node[xshift=-2mm,yshift=2mm] {$r$}	(s);
\node[rectangle]	(tsrts) [below=of srts,xshift=10mm]		{$tsrts$}
	edge		[pre,dashed]	node[xshift=2mm,yshift=2mm]	{$t$}		(srts);
\node[rectangle]	(tsr)	[right=of tsrts]			{$tsr$}
	edge		[post]		node[yshift=2.5mm]			{$s$}		(tsrts)
	edge		[pre,bend right=15] node[xshift=2mm,yshift=2mm] {$t$}	(s);
\node[rectangle]	(w0)	[below=of rsrts,xshift=10mm]		{$w_0$}
	edge		[pre,dashed]	node[xshift=-2mm,yshift=-2mm]	{$t$}		(rsrts)
	edge		[pre,dashed]	node[xshift=2mm,yshift=-2mm]	{$r$}		(tsrts);
\node[rectangle]	(rtstr) [below=of w0,yshift=-3mm]		{$rtstr$}
	edge		[pre,bend left=10] node[xshift=-2mm,yshift=-2mm] {$t$}	(rst)
	edge		[pre,bend right=10] node[xshift=2mm,yshift=-2mm] {$r$}	(tsr)
	edge		[post,dashed]	node[xshift=2mm]			{$s$}		(w0);
\end{tikzpicture}
\caption{$\Gamma_{\#}$ for $W(A_3)$}
\label{fig:a3twisted}
\end{minipage}%
\end{minipage}%
\end{figure}
\noindent
\end{example}


\begin{example}
Suppose 
$W = \spanof{r,s,t}$  with 
$n(r,s)=3$, $n(s,t)=4$, $n(r,t)=2$.  With $w \mapsto w^* = w$, 
$I_*$ is the set of involutions of $W$.  The
corresponding $W$-digraph 
$\Gamma_*$ takes the form shown in 
Figure~\ref{fig:b3}. 
Note $(\Gamma_*)_{\text{rev}} \cong \Gamma_*$.  
\begin{figure}[ht]
\centering
\begin{tikzpicture}
[node distance=0.55cm,%
pre/.style={<-,shorten <=1pt,>=angle 45},%
post/.style={->,shorten >=1pt,>=angle 45}];%
rectangle/.style={inner sep=0pt,minimum size=5mm}];%
\newcommand{\vshift}{25mm}%
\node[rectangle]	(ststw0) 								{${\strut}ststw_0$};
\node[rectangle]	(stsw0) [right=of ststw0]					{${\strut}stsw_0$}
	edge			[pre,dashed]		node[yshift=2.5mm]		{$t$}		(ststw0);
\node[rectangle]	(sts) [above=of ststw0]					{${\strut}sts$}
	edge			[post]			node[xshift=2.5mm]		{$r$}		(ststw0);
\node[rectangle]	(stst) [above=of stsw0]					{${\strut}stst$}
	edge			[pre,dashed]		node[yshift=2.5mm]		{$t$}		(sts)
	edge			[post]			node[xshift=2.5mm]		{$r$}		(stsw0);
\node[rectangle]	(sw0) [below=of ststw0]					{${\strut}sw_0$};
\node[rectangle]	(tstw0) [left=of sw0,xshift=-3mm]			{${\strut}tstw_0$}
	edge			[pre,dashed]	node[xshift=-2mm,yshift=2mm]	{$s$}		(ststw0)
	edge			[post]			node[yshift=2.5mm]		{$t$}		(sw0);
\node[rectangle]	(w0) [below=of stsw0]					{${\strut}w_0$}
	edge			[pre,dashed]		node[yshift=2.5mm]		{$s$}		(sw0);
\node[rectangle]	(tw0) [right=of w0,xshift=3mm]				{${\strut}tw_0$}
	edge			[post,dashed]		node[yshift=2.5mm]		{$t$}		(w0)
	edge			[pre]			node[xshift=2mm,yshift=2mm]	{$s$}		(stsw0);
\node[rectangle]	(t) [above=of tstw0,yshift=\vshift]			{${\strut}t$}
	edge			[post]		node[xshift=-2mm,yshift=-2mm] {$s$}		(sts);
\node[rectangle]	(e) [above=of sw0,yshift=\vshift]			{${\strut}e$}
	edge			[post,dashed]		node[yshift=2.5mm]		{$t$}		(t);
\node[rectangle]	(s) [above=of w0,yshift=\vshift]				{${\strut}s$}
	edge			[pre,dashed]		node[yshift=2.5mm]		{$s$}		(e);
\node[rectangle]	(tst) [above=of tw0,yshift=\vshift]			{${\strut}tst$}
	edge			[post,dashed]	node[xshift=2mm,yshift=-2mm]	{$s$}		(stst)
	edge			[pre]				node[yshift=2.5mm]		{$t$}		(s);
\node[rectangle]	(rt) [above=of t]							{${\strut}rt$}
	edge			[pre,dashed]		node[xshift=2.5mm]		{$r$}		(t);
\node[rectangle]	(rtstrw0) [below=of tstw0]					{${\strut}rtstrw_0$}
	edge			[post]			node[xshift=2.5mm]		{$r$}		(tstw0)
	edge			[pre,bend left=35]	node[xshift=-2.5mm]		{$s$}		(rt);
\node[rectangle]	(r) [above=of e]							{${\strut}r$}
	edge			[post,dashed]		node[yshift=2.5mm]		{$t$}		(rt)
	edge			[pre,dashed]		node[xshift=2.5mm]		{$r$}		(e);
\node[rectangle]	(srs) [above=of s]						{${\strut}srs$}
	edge			[pre]				node[yshift=2.5mm]		{$s$}		(r)
	edge			[pre]				node[xshift=2.5mm]		{$r$}		(s);
\node[rectangle]	(rtstr) [above=of tst]						{${\strut}rtstr$}
	edge			[pre]				node[yshift=2.5mm]		{$t$}		(srs)
	edge			[pre]				node[xshift=-2.5mm]		{$r$}		(tst);
\node[rectangle]	(srsw0) [below=of sw0]					{${\strut}srsw_0$}
	edge			[pre]				node[yshift=2.5mm]		{$t$}		(rtstrw0)
	edge			[post]			node[xshift=2.5mm]		{$r$}		(sw0);
\node[rectangle]	(rw0) [below=of w0]						{${\strut}rw_0$}
	edge			[pre]				node[yshift=2.5mm]		{$s$}		(srsw0)
	edge			[post,dashed]		node[xshift=2.5mm]		{$r$}		(w0);
\node[rectangle]	(rtw0) [below=of tw0]						{${\strut}rtw_0$}
	edge			[post,dashed]		node[yshift=2.5mm]		{$t$}		(rw0)
	edge			[post,dashed]		node[xshift=-2.5mm]		{$r$}		(tw0)
	edge			[pre,bend right=35]	node[xshift=2.5mm]		{$s$}		(rtstr);
\end{tikzpicture}
\caption{$\Gamma_*$ for $W(B_3)$}
\label{fig:b3}
\end{figure}
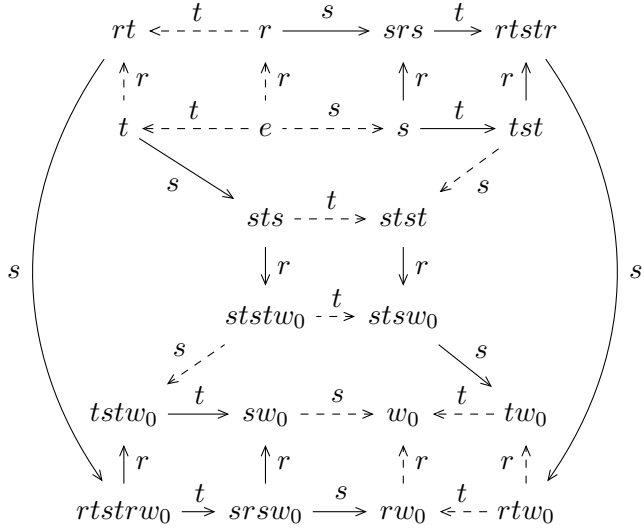 
\end{example}


\section{The proofs of Theorems~\ref{theorem:acyclic}, 
\ref{theorem:linearcharmults}, \ref{theorem:index}, and 
\ref{theorem:equallengths}}
\label{section:acyclic}

Throughout this section $(W,S)$ is a Coxeter system
and $\Gamma$ is a $W$-digraph.  
If 
$\varepsilon = \solidedge{$\alpha$}{$\beta$}{$s$} \in \edges(\Gamma)$ or 
$\varepsilon = \dashededge{$\alpha$}{$\beta$}{$s$} \in \edges(\Gamma)$, 
then we call $ \solidedge{$\alpha$}{$\beta$}{$s$} \in \edges(\Gammaarrow)$ the
{\it image} of $\varepsilon$ in $\Gammaarrow$.  
Clearly there is a directed path from $\alpha$ to $\beta$ in $\Gamma$ if
and only if there is a directed path from $\alpha$ to $\beta$
in $\Gammaarrow$.

Let $H_0$ be the $0$-Hecke algebra 
of $(W,S)$ (see \cite{norton}, or
\cite{bourbaki}, Chapter IV, \S2, Exercise 23, 
with $\lambda_s = -1$, $\mu_s = 0$ for $s \in S$).  
Thus $H_0$ is an associative algebra over
$\rationals$ with 
generating set
$\setof{a_s}{s\in S}$ satisfying
the presentation  
\begin{equation*}
a_s^2 = - a_s
\end{equation*}
for $s \in S$ and
\begin{equation*} 
\overbrace{a_s a_t a_s \cdots}^{n(s,t)}
=
\overbrace{a_t a_s a_t \cdots}^{n(s,t)}
\end{equation*} 
if $s, t \in S$, $n(s,t)<\infinity$.
Also, $H_0$ has basis 
$\setof{a_w}{w\in W}$ with $a_e$ the
identity element of $H_0$ and
\begin{equation}
\label{eq:eqnsw}
a_s a_w
=
\begin{cases}
a_{sw} & \text{if $sw > w$,} \\
- a_{w} & \text{if $sw < w$.}
\end{cases}
\end{equation}
It follows  that
for $x, y \in W$, there is $z \in W$
such that
\begin{equation*}
a_x a_y = \pm a_z
\quad
\text{and}
\quad
\max\set{\ell(x),\ell(y)} \le \ell(z),
\end{equation*}
and 
\begin{equation*}
a_x a_y = a_{xy}
\iff
\ell(x) + \ell(y) = \ell(xy).
\end{equation*}
If $(W,S)$ is finite
and 
$w_0$ is the longest element of $W$,
then 
\begin{equation*}
a_w a_{w_0} = (-1)^{\ell(w)} a_{w_0}   
= a_{w_0} a_w
\end{equation*}
for $w \in W$.

Let $M=M(\Gamma)$ 
be the module afforded by $\Gamma$, so
$M$ has basis $X = \vertices(\Gamma)$
over $\rationals(u)$.  
Let $M_0$ be the $\rationals$-subspace of $M$
 with basis $X$.  
For $s \in S$, define a $\rationals$-linear operator
$(\tau_s)_0$ on $M_0$ by
\begin{equation*}
(\tau_s)_0(\alpha)
=
\begin{cases}
\beta & \text{if  }\solidedge{$\alpha$}{$\beta$}{$s$} \in \edges(\Gammaarrow), \cr
-\alpha & \text{if $\alpha$ is a sink in $\Gamma_s$}. \cr
\end{cases}
\end{equation*}
Notice that by \eqref{eq:taudefinition}, 
$(\tau_s)_0(\alpha)$ can be obtained by
replacing the coefficients of the 
image $\tau_s(\alpha)$ expressed as
a linear combination of the elements of $X$ with their
values at $u = 0$.
Since in $\gl(M)$ we have
\begin{equation*}
(\tau_s - u^2)(\tau_s + 1) = 0
\qquad\text{and}\qquad
\overbrace{\tau_s \tau_t \tau_s \cdots}^{n(s,t)}
=
\overbrace{\tau_t \tau_s \tau_t \cdots}^{n(s,t)},
\end{equation*}
it follows that in $\gl(M_0)$ we have
\begin{equation*}
\left((\tau_s)_0)\right)^2 = -(\tau_s)_0
\qquad\text{and}\qquad
\overbrace{(\tau_s)_0 (\tau_t)_0 (\tau_s)_0 \cdots}^{n(s,t)}
=
\overbrace{(\tau_t)_0 (\tau_s)_0 (\tau_t)_0 \cdots}^{n(s,t)}
\end{equation*}
if $n(s,t) < \infinity$.  
Hence $a_s \mapsto (\tau_s)_0$ defines a representation
$\rho_0 : H_0 \rightarrow \gl(M_0)$, giving $M_0$ the
structure of an $H_0$-module.   
In particular, for $\alpha\in \vertices(\Gamma)$, 
\begin{equation}
\label{eq:h0action}
a_s \alpha
=
\begin{cases}
\beta & \text{if } \solidedge{$\alpha$}{$\beta$}{$s$} \in \edges(\Gammaarrow), \cr
- \alpha & \text{if $\alpha$ is a sink in $\Gamma_{\set{s}}$}. \cr
\end{cases} 
\end{equation}


\begin{lemma}
\label{lemma:h0paths}
Assume $(W,S)$ is a  
Coxeter system, $\Gamma$ is a $W$-digraph, 
$X = \vertices(\Gamma)$, and
$H_0$ acts on $M_0$ as described above.
Then the following hold.
\begin{enumerate}[{\upshape(i)}]
\item
If $\alpha \in X$ and $w \in W$, then
$a_w \alpha \in X$ or $-a_w \alpha \in X$. 
\item
If $\beta \in X$, then there exists some $w \in W$
such that $\beta = \pm a_w \alpha$ if and only
if there is a
directed path from $\alpha$ to $\beta$ in $\Gamma$.
\end{enumerate}
\end{lemma}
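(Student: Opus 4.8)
The plan is to exploit a single structural feature of the action \eqref{eq:h0action}: each generator $a_s$ sends every element of the basis $X$ to $\pm$ an element of $X$, so the ``signed basis'' $\setof{\pm\alpha}{\alpha\in X}$ is carried into itself by every $a_s$. Indeed, by \eqref{eq:h0action} we have $a_s\alpha=\beta\in X$ when there is a directed edge from $\alpha$ to $\beta$ labeled $s$ in $\Gammaarrow$, and $a_s\alpha=-\alpha$ when $\alpha$ is a sink in $\Gamma_{\set{s}}$; in either case $a_s\alpha\in\setof{\pm\gamma}{\gamma\in X}$. I would also record at the outset the identity $a_w=a_{s_1}a_{s_2}\cdots a_{s_k}$ valid for any reduced expression $w=s_1 s_2\cdots s_k$, which follows by induction from the stated rule $a_x a_y=a_{xy}$ whenever $\ell(x)+\ell(y)=\ell(xy)$, using that every tail $s_j\cdots s_k$ of a reduced word is again reduced.

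Part (i) is then immediate: writing $a_w=a_{s_1}\cdots a_{s_k}$ for a reduced $w$ and applying $a_{s_k},\dots,a_{s_1}$ to $\alpha$ one factor at a time, the first paragraph shows that the running result always lies in $\setof{\pm\gamma}{\gamma\in X}$. Hence $a_w\alpha=\pm\delta$ for some $\delta\in X$, which is precisely the statement that $a_w\alpha\in X$ or $-a_w\alpha\in X$.

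For the ``if'' direction of part (ii), I would take a directed path $\alpha=\gamma_0\to\gamma_1\to\cdots\to\gamma_r=\beta$ in $\Gammaarrow$ (one exists in $\Gammaarrow$ exactly when one exists in $\Gamma$, as already noted, since these graphs share vertices and edge directions). If the edge $\gamma_{i-1}\to\gamma_i$ is labeled $s_i$, then $\gamma_{i-1}$ is the source of the unique $s_i$-edge containing it, so $a_{s_i}\gamma_{i-1}=\gamma_i$ by \eqref{eq:h0action}; composing gives $a_{s_r}\cdots a_{s_1}\alpha=\beta$. Since any product of generators equals $\pm a_z$ for some $z\in W$, this yields $\beta=\pm a_w\alpha$ with $w=z$. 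For the ``only if'' direction, I would fix a reduced expression $w=s_1\cdots s_k$, so $a_w=a_{s_1}\cdots a_{s_k}$, set $\delta_0=\alpha$, and apply $a_{s_k},\dots,a_{s_1}$ in turn, at each step writing $a_{s_{k-j+1}}\delta_{j-1}=\varepsilon_j\delta_j$ with $\varepsilon_j\in\set{1,-1}$ and $\delta_j\in X$, as permitted by \eqref{eq:h0action}. In the source case $\delta_{j-1}\to\delta_j$ is a directed edge of $\Gammaarrow$, and in the sink case $\delta_j=\delta_{j-1}$; so the sequence $\delta_0,\dots,\delta_k$ traces a directed path from $\alpha$ to $\delta_k$ in $\Gammaarrow$, hence in $\Gamma$. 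Finally $a_w\alpha=\bigl(\varepsilon_1\cdots\varepsilon_k\bigr)\delta_k$, whence $\beta=\pm\delta_k$.

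The only genuinely delicate point is the sign bookkeeping at the end: one must observe that $\beta=\pm\delta_k$ with both $\beta$ and $\delta_k$ in the linearly independent, nonzero set $X$ forces $\beta=\delta_k$ (a relation $\beta+\delta_k=0$ being impossible), so that the accumulated signs $\varepsilon_j$ may be discarded once the underlying vertex sequence is in hand. I do not expect a real obstacle here, since everything reduces to the observation that each $a_s$ realizes the $s$-edges of $\Gammaarrow$ as a signed partial matching on $X$; the remaining work is the routine reduced-word identity and the translation between this matching and directed paths.
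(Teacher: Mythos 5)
Your proposal is correct and follows essentially the same route as the paper: part (i) by peeling off generators along a reduced word using \eqref{eq:h0action}, the forward direction of (ii) by reading $a_{s_k}\cdots a_{s_1}=\pm a_y$ off a directed path, and the converse by tracking the signed vertex sequence $\delta_0,\dots,\delta_k$ and discarding the stationary steps. Your explicit remark that $\beta=\pm\delta_k$ with $\beta,\delta_k$ in the linearly independent set $X$ forces $\beta=\delta_k$ is a small point the paper leaves implicit, but nothing more.
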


\begin{proof}
Since $a_w = a_{s_1} a_{s_2} \cdots a_{s_\ell}$ if
$s_1 s_2 \dots s_\ell$ is a reduced expression for
$w \in W$ by \eqref{eq:eqnsw}, an easy
induction argument based on \eqref{eq:h0action} 
establishes (i).  

For (ii), we can argue with $\Gammaarrow$ in place of $\Gamma$.  
Suppose $\beta \in X$ and there is some directed
path 
\begin{equation}
\label{eq:path}
\begin{split}
\begin{tikzpicture}
[node distance=0.8cm,%
pre/.style={<-,shorten <=1pt,>=angle 45},%
post/.style={->,shorten >=1pt,>=angle 45}];%
rectangle/.style={inner sep=0pt,minimum size=5mm}];%
\node[rectangle]	(node0)						{${\strut}\gamma_{0}$};
\node[rectangle]	(node1) [right=of node0]			{${\strut}\gamma_{1}$}
	edge [pre]	node[yshift=3mm]	{$s_1$}		(node0);
\node[rectangle]	(node2) [right=of node1]		{${\strut}\gamma_{2}$}
	edge [pre]		node[yshift=3mm]	{$s_2$}	(node1);
\node[rectangle]	(node3) [right=of node2]	{${\strut}\cdots$}
	edge [pre]		node[yshift=3mm]	{$s_3$}		(node2);
\node[rectangle]	(node4) [right=of node3]	{${\strut}\gamma_{k-1}$}
	edge [pre]		node[yshift=3mm]	{$s_{k-1}$}	(node3);
\node[rectangle]	(node5) [right=of node4]				{${\strut}\gamma_{k}$}
	edge [pre]			node[yshift=3mm]	{$s_{k}$}	(node4);
\end{tikzpicture}
\end{split}
\end{equation}
in $\Gammaarrow$ 
with $\gamma_0=\alpha$, $\gamma_k = \beta$.
Define $y \in W$ by
$\pm a_y = a_{s_k} a_{s_{k-1}} \cdots a_{s_2} a_{s_1}$.
Then  
\[
\pm a_y \alpha = 
a_{s_k} a_{s_{k-1}} \cdots a_{s_2} a_{s_1} \gamma_0
= \gamma_k = \beta.
\]
Conversely, assume 
$\beta = \pm a_w \alpha \in X$, where $w \in W$.  Let 
$w = t_{k} t_{k-1} \cdots t_2 t_1$
be a reduced expression for $w$ as a product of generators
$t_k, \dots, t_1 \in S$, so
$a_w = a_{t_k} \cdots a_{t_2} a_{t_1}$ by \eqref{eq:eqnsw}.
Put $\delta_0 = \alpha$ and
$\delta_{j} = a_{t_j}\delta_{j-1}$ for $1 \le j \le k$, so
$\beta = \pm \delta_k$.  
By (i), there are  $\varepsilon_j \in \set{-1,1}$ such that
$\alpha_j = \varepsilon_j \delta_j \in X$ for $0 \le j \le k$.
Then for $1 \le j \le k$, 
$\alpha_{j-1} \ne \alpha_j$ if and only if
$\solidedge{$\alpha_{j-1}$}{$\alpha_{j}$}{$t_j$} \in \edges(\Gammaarrow)$ .
If $0 < j_1 < j_2 < \cdots < j_\ell$ are the
values of $j$, $1 \le j \le k$, for which $\alpha_{j-1} \ne \alpha_j$, then
\[
\begin{tikzpicture}
[node distance=0.8cm,%
pre/.style={<-,shorten <=1pt,>=angle 45},%
post/.style={->,shorten >=1pt,>=angle 45}];%
rectangle/.style={inner sep=0pt,minimum size=5mm}];%
\node[rectangle]	(node0)						{${\strut}\alpha_{0}$};
\node[rectangle]	(node1) [right=of node0]			{${\strut}\alpha_{j_1}$}
	edge [pre]	node[yshift=3.5mm]	{${\strut}t_{j_1}$}		(node0);
\node[rectangle]	(node2) [right=of node1]		{${\strut}\alpha_{j_2}$}
	edge [pre]		node[yshift=3.5mm]	{${\strut}t_{j_2}$}	(node1);
\node[rectangle]	(node3) [right=of node2]	{${\strut}\cdots$}
	edge [pre]		node[yshift=3.5mm]	{${\strut}t_{j_3}$}		(node2);
\node[rectangle]	(node4) [right=of node3]	{${\strut}\alpha_{j_{\ell-1}}$}
	edge [pre]		node[yshift=3.5mm]	{${\strut}t_{t_{\ell-1}}$}	(node3);
\node[rectangle]	(node5) [right=of node4]		{${\strut}\alpha_{j_{\ell}}$}
	edge [pre]			node[yshift=3.5mm]	{${\strut}t_{j_\ell}$}	(node4);
\end{tikzpicture}
\]
is a directed path in $\Gammaarrow$ from $\alpha$ to $\beta$.
Thus (ii) holds.
\end{proof}


\begin{lemma}
\label{lemma:h0sinks}
Assume $(W,S)$ is a  
Coxeter system, $\Gamma$ is a $W$-digraph, 
$X = \vertices(\Gamma)$, and
$H_0$ acts on $M_0$ as in \eqref{eq:h0action}.
For $\omega \in X$, the following are equivalent. 
\begin{enumerate}[{\upshape(i)}]
\item
$\omega$ is a sink in $\Gamma$.
\item
$a_s \omega = - \omega$ for all $s \in S$.
\item
$a_w \omega = (-1)^{\ell(w)} \omega$ for all $w \in W$.
\end{enumerate}
Moreover, if $(W,S)$ is finite, then (i)--(iii) are equivalent to
\begin{enumerate}[{\upshape(i)}]
\setcounter{enumi}{3}
\item
$\omega = \pm a_{w_0} \alpha$ for some $\alpha \in X$.
\end{enumerate}
\end{lemma}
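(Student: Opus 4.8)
The plan is to prove the chain (i) $\iff$ (ii) $\iff$ (iii) in general, and then, in the finite case, to close the loop with (iii) $\implies$ (iv) $\implies$ (ii). Almost everything will follow directly from the $H_0$-action \eqref{eq:h0action}, the multiplication rule \eqref{eq:eqnsw}, and (for the finite case) the relation $a_w a_{w_0} = (-1)^{\ell(w)} a_{w_0}$ recorded just before the lemma.

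First I would dispose of (i) $\iff$ (ii). The key observation is that $\omega$ is a sink of $\Gamma$ (equivalently of $\Gammadir$, equivalently of $\Gammaarrow$) exactly when $\omega$ has no outgoing edge of any label; since every vertex lies in exactly one edge labeled $s$ for each $s \in S$, this holds if and only if, for every $s$, the unique $s$-edge at $\omega$ is directed \emph{into} $\omega$, i.e. $\omega$ is a sink of $\Gamma_{\set{s}}$. By the second clause of \eqref{eq:h0action}, being a sink of $\Gamma_{\set{s}}$ is precisely the condition $a_s \omega = -\omega$, so (i) $\iff$ (ii).

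Next, (ii) $\iff$ (iii). For the forward direction, choose a reduced expression $w = s_1 \cdots s_\ell$, so that $a_w = a_{s_1} \cdots a_{s_\ell}$ by \eqref{eq:eqnsw}; applying these operators to $\omega$ one factor at a time, and using $a_{s_i} \omega = -\omega$ at each step, gives $a_w \omega = (-1)^\ell \omega = (-1)^{\ell(w)} \omega$. The reverse direction is immediate: specializing (iii) to the generators $w = s$, where $\ell(s) = 1$, returns exactly (ii). Now assume $(W,S)$ is finite. For (iii) $\implies$ (iv) I would simply take $\alpha = \omega$: by (iii), $a_{w_0}\omega = (-1)^{\ell(w_0)}\omega$, whence $\omega = \pm a_{w_0}\omega$. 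For (iv) $\implies$ (ii), suppose $\omega = \pm a_{w_0}\alpha$ for some $\alpha \in X$; then for each $s \in S$,
\[
a_s \omega = \pm (a_s a_{w_0})\alpha = \pm(-a_{w_0})\alpha = -(\pm a_{w_0}\alpha) = -\omega,
\]
using $a_s a_{w_0} = (-1)^{\ell(s)} a_{w_0} = -a_{w_0}$. This is (ii).

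The computations are all routine, so there is no serious obstacle; the points that demand care are conventional ones. The main thing to verify correctly is the reduction of the \emph{global} sink condition in (i) to the \emph{per-generator} condition tested in (ii), which relies on the fact that each vertex of an $S$-labeled digraph meets exactly one $s$-edge, so that ``$\omega$ is a sink of $\Gamma$'' decomposes cleanly over the label set $S$. In the finite case the only delicate point is propagating the ambiguous sign through the computation for (iv) $\implies$ (ii); this is handled uniformly because $a_s a_{w_0} = -a_{w_0}$ absorbs the sign regardless of which one occurs.
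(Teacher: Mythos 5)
Your proposal is correct and follows essentially the same route as the paper: the equivalence of (i)--(iii) is read off from \eqref{eq:h0action} together with the factorization $a_w = a_{s_1}\cdots a_{s_\ell}$ from \eqref{eq:eqnsw}, and the finite case uses $a_s a_{w_0} = -a_{w_0}$ exactly as the paper does (taking $\alpha = \omega$ for the forward direction). The only difference is the cosmetic one of which cycle of implications you choose to close; the individual steps coincide with the paper's.
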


\begin{proof}
If $\omega$ is a sink in $\Gamma$, then $a_s \omega = - \omega$
for all $s\in S$ by \eqref{eq:h0action}.  Thus (i) implies (ii).

Assume $a_s \omega = - \omega$ for all $s \in S$ 
and $w \in W$ has reduced expression
$w = s_1 s_2 \cdots s_k$.  Then by \eqref{eq:eqnsw}, 
\begin{equation*}
a_w \omega = a_{s_1} a_{s_2} \cdots a_{s_k} \omega
= (-1)^{k} \omega = (-1)^{\ell(w)} \omega.
\end{equation*}
Hence (ii) imples (iii).

Suppose $a_w \omega = (-1)^{\ell(w)} \omega$ for 
all $w \in W$. Then 
$a_s \omega = - \omega$ for all $s\in S$, and thus
$\omega$ must be a sink in $\Gamma$ by \eqref{eq:h0action}.
Hence (iii) implies (i).

Suppose 
$(W,S)$ is finite and
$\omega = \varepsilon a_{w_0} \alpha$, where
$\alpha \in X$ and $\varepsilon \in \set{-1,1}$.  
Then  
\begin{equation*}
a_s \omega = a_s (\varepsilon a_{w_0} \alpha) =
\varepsilon (a_s a_{w_0}) \alpha = - \varepsilon a_{w_0} \alpha = - \omega
\end{equation*}
for any $s \in S$, 
and so $\omega$ is a sink in
$\Gamma$.  Conversely, if 
$\omega$ is a sink in $\Gamma$, then
$a_{w_0} \omega = (-1)^N \omega = \pm \omega$ by \eqref{eq:h0action},
where $N = \ell(w_0$), 
and thus $\omega = \pm a_{w_0} \omega$.
Hence (i) and (iv) are equivalent.  
\end{proof}


Define relations $\equiv_{s,t}$ and $\equiv$ on the set of
directed paths in $\Gamma$ as follows.
If $\pi_1$ and $\pi_2$ are directed paths in $\Gamma$ 
and $s, t \in S$ satisfy $1 < n(s,t) < \infinity$, 
then $\pi_1 \equiv_{s,t} \pi_2$ if there is some 
connected component 
$C$ of $\Gamma_{\set{s,t}}$ such that $\pi_1$ and $\pi_2$
both pass through the source $\sigma$ and the sink
$\omega$ of $C$, and $\pi_2$ can be obtained from
$\pi_1$ by replacing one of the directed paths from
$\sigma$ to $\omega$ in $\Gamma_{\set{s,t}}$ by the other.
Let $\equiv$ be the equivalence relation on directed paths
in $\Gamma$ generated by the relations $\equiv_{s,t}$ 
for $s, t \in S$, $1 < n(s,t) < \infinity$.  
Similar  relations, also denoted
$\equiv_{s,t}$ and $\equiv$, can be defined for
directed paths in $\Gammaarrow$.  It is clear that two directed paths
in $\Gamma$ are in the same $\equiv$
equivalence class if and only if their images in
$\Gammaarrow$ are in the same 
$\equiv$ equivalence class.


For $\alpha \in \vertices(\Gamma)$, denote by
$\closedray{\alpha}$ the set of all $\beta \in \vertices(\Gamma)$
such that there exists a directed path in $\Gamma$ from
$\alpha$ to $\beta$.  Clearly if $\beta \in \closedray{\alpha}$ and
$\gamma \in \closedray{\beta}$, then
$\gamma \in \closedray{\alpha}$.  
For $\beta \in \closedray{\alpha}$, let $\mu(\alpha,\beta)$ be 
the minimum number of edges in a directed path from $\alpha$ to 
$\beta$ (with $\mu(\alpha,\alpha) = 0$). 


\begin{lemma}
\label{lemma:locallyfinite}
Suppose $(W,S)$ is a Coxeter system such that
$n(s,t) < \infinity$ for all $s,t \in S$, and
$\Gamma$ is a  $W$-digraph with source $\sigma$.
\begin{enumerate}[{\upshape(i)}]
\item
If $\alpha \in \closedray{\sigma}$, then
any two directed paths
from $\sigma$ to $\alpha$ are in the same 
$\equiv$-equivalence class.
\item
If $\alpha \in \closedray{\sigma}$, 
$\zeta \in \vertices(\Gamma)$, and 
$\alpha \in \closedray{\zeta}$, then
$\zeta \in \closedray{\sigma}$.
\item
If $\Gamma$ is connected, then 
$\vertices(\Gamma) = \closedray{\sigma}$.
\end{enumerate}
\end{lemma}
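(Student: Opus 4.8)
The plan is to push the whole question into the $0$-Hecke algebra $H_0$ acting on $M_0$ by \eqref{eq:h0action}, and to match directed paths issuing from the source $\sigma$ with reduced words in $W$. Everything rests on one claim, call it $(\star)$: for each $w\in W$ the element $a_w\sigma$ is, with a plus sign, an honest vertex $\beta\in\vertices(\Gamma)$; there is a directed path from $\sigma$ to $\beta$ whose sequence of edge labels, read backwards, is a reduced expression of $w$; and for every $s\in S$ the $s$-edge at $\beta$ is directed \emph{into} $\beta$ precisely when $\ell(sw)<\ell(w)$, in which case its other endpoint is $a_{sw}\sigma$. I would prove $(\star)$ by induction on $\ell(w)$: writing $w=sw'$ with $\ell(w')=\ell(w)-1$, the inductive hypothesis makes $\beta'=a_{w'}\sigma$ a vertex whose $s$-edge points \emph{out} (since $\ell(sw')>\ell(w')$), so by \eqref{eq:h0action} $a_s\beta'=a_w\sigma$ is again a vertex and the outgoing edge $\beta'\to\beta$ extends a path for $w'$ to one for $w$. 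The descent half of the edge statement is immediate, since $\ell(sw)<\ell(w)$ forces $a_sa_w=-a_w$, hence $a_s(a_w\sigma)=-a_w\sigma$, an incoming $s$-edge.

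Granting $(\star)$, parts (i) and (ii) fall out. First, $(\star)$ shows that the set of $s\in S$ whose $s$-edge enters a given vertex $\beta$ equals $\setof{s}{\ell(sw)<\ell(w)}$ for \emph{any} $w$ with $a_w\sigma=\beta$; peeling a common left descent $s$ sends every such $w$ to the single $s$-neighbour $a_{sw}\sigma$ of $\beta$, so induction on length forces the $w$ with $a_w\sigma=\beta$ to be unique, say $w_\beta$. Thus the backwards label word of every directed path from $\sigma$ to $\alpha$ is a reduced expression of the one element $w_\alpha$. By Matsumoto's theorem any two reduced expressions of $w_\alpha$ are linked by braid substitutions $\overbrace{sts\cdots}^{n}=\overbrace{tst\cdots}^{n}$; using $n(s,t)<\infinity$ and Theorem~\ref{theorem:main}, each such substitution corresponds on paths to replacing one side of an $\set{s,t}$-component of $\Gamma$ (one of Figures~\ref{fig:main1}--\ref{fig:main8}) by the other, traversed from its source to its sink, which is exactly one elementary move $\equiv_{s,t}$. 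Hence the two paths are $\equiv$-equivalent, proving (i).

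For (ii) I would induct on the number $l$ of edges in a directed path $\zeta\rightsquigarrow\alpha$. If $l=0$ then $\zeta=\alpha\in\closedray{\sigma}$. Otherwise let $\eta\to\alpha$ be the last edge, labelled $t$; then $\alpha$ is a sink in $\Gamma_{\set t}$, so $a_t\alpha=-\alpha$, and $(\star)$ applied to $w_\alpha$ identifies $\eta$, the unique $t$-neighbour of $\alpha$, with $a_{tw_\alpha}\sigma$, whence $\eta\in\closedray{\sigma}$. The initial segment $\zeta\rightsquigarrow\eta$ has $l-1$ edges, so $\zeta\in\closedray{\sigma}$ by induction. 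Part (iii) is then immediate: $\closedray{\sigma}$ is closed under outgoing edges by definition and under incoming edges by (ii) (an edge $\eta\to\alpha$ with $\alpha\in\closedray{\sigma}$ gives $\alpha\in\closedray{\eta}$, so $\eta\in\closedray{\sigma}$); thus $\closedray{\sigma}$ is a union of connected components of $\Gammaundir$, and equals $\vertices(\Gamma)$ when $\Gamma$ is connected.

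The main obstacle is the \emph{ascent} half of the edge statement in $(\star)$: that when $\ell(sw)>\ell(w)$ the $s$-edge at $\beta=a_w\sigma$ really points outward rather than inward. I would settle this inside the induction by examining the $\set{s,s'}$-component $C$ of $\beta$, where $s'$ is the leftmost letter just peeled (so the $s'$-edge at $\beta$ is already known to be incoming). Since $n(s,s')<\infinity$, Theorem~\ref{theorem:main} forces $C$ to be one of Figures~\ref{fig:main1}--\ref{fig:main8}, in which the only vertex carrying two incoming edges is the sink; identifying $C$ with the image of a coset of $W_{\set{s,s'}}$ shows $\beta$ is that sink exactly when both $s$ and $s'$ are left descents of $w$, which $\ell(sw)>\ell(w)$ excludes. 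Hence $\beta$ is not the sink of $C$, its second (the $s$-) edge must point out, and the induction closes. Verifying this coset identification and the braid-to-$\equiv_{s,t}$ correspondence are the only two places that genuinely invoke the dihedral classification; the remainder is bookkeeping in $H_0$.
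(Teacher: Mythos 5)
Your reduction to the $0$-Hecke algebra founders on claim $(\star)$, which is false, and the failure is not a repairable technicality: it occurs exactly where the dihedral classification refuses to cooperate. Take $W=\langle s,t\rangle$ with $n(s,t)=3$ and let $\Gamma$ be the digraph of Figure~\ref{fig:main4} with $m=2$ (this is the Lusztig--Vogan digraph on the involutions of $W$): the source $\sigma=\alpha_0$ has dashed edges to $\alpha_1$ (label $s$) and to $\beta_1$ (label $t$), and $\alpha_1,\beta_1$ have solid edges labelled $t,s$ respectively into the sink $\beta_2$. Then $a_{st}\sigma=a_s(a_t\sigma)=a_s\beta_1=\beta_2$, yet the $t$-edge at $\beta_2$ is \emph{incoming} even though $\ell(t\cdot st)=3>2=\ell(st)$; so the ascent half of $(\star)$ fails. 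Likewise $a_{ts}\sigma=\beta_2=a_{st}\sigma$ with $ts\ne st$, so your element $w_\beta$ is not well defined, and the two directed paths from $\sigma$ to $\beta_2$ spell reduced words for \emph{different} elements of $W$. Consequently Matsumoto's theorem cannot connect them and your proof of (i) collapses; the proofs of (ii) and hence (iii) collapse with it, since they lean on the same descent-set identification.

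The root cause is the ``coset identification'' you invoke to settle the ascent half: Theorem~\ref{theorem:main} only requires $m\mid n$, $(2m-1)\mid n$ or $(2m-2)\mid n$, so a connected component of $\Gamma_{\set{s,t}}$ may be a proper folded quotient with $2m<2n=\abs{W_{\set{s,t}}}$ vertices, and a vertex can be the sink of its component without both $s$ and $t$ being left descents of any $w$ with $\pm a_w\sigma=\beta$. For the same reason an elementary move $\equiv_{s,t}$ need not correspond to a braid relation in $W$ at all, so the label word of a path is simply not an invariant of its $\equiv$-class. The paper's proof avoids this trap entirely: it establishes (i) and (ii) by a simultaneous induction on $\mu(\sigma,\alpha)$, using only that each component of $\Gamma_{\set{s,t}}$ has a unique source and a unique sink and that its two source-to-sink paths have equal length; no identification of vertices with group elements is attempted, and (iii) is then a separate induction on undirected distance from $\sigma$. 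Your $H_0$ bookkeeping (Lemmas~\ref{lemma:h0paths} and~\ref{lemma:h0sinks}) is sound --- the paper uses it in the proofs of Theorem~\ref{theorem:acyclic}(iii) and Theorem~\ref{theorem:index} --- but it cannot carry the weight you place on it here.
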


\begin{proof}
We can argue with $\Gammaarrow$ in place of $\Gamma$. 
We prove (i) and (ii) simultaneously by induction on $\mu(\sigma,\alpha)$.  
If $\mu(\sigma,\alpha) = 0$, then $\alpha = \sigma$, so (i) holds
because the only directed path from $\sigma$ to $\sigma$ is the
empty path because $\sigma$ is a source.  
Also, if $\alpha = \sigma \in \closedray{\gamma}$, 
then there is a directed path from $\gamma$ to
$\sigma$, so the path must be empty and 
$\gamma = \sigma \in \closedray{\sigma}$, and thus (ii) holds.

Suppose $\mu(\sigma,\alpha) = k > 0$ and (i) and (ii) hold with $\beta$
in place of $\alpha$ whenever $\beta \in \closedray{\sigma}$ and
$\mu(\sigma,\beta) < k$.  Let $\pi_1$ be some directed path from 
$\sigma$ to $\alpha$ with $k$ edges, and let $\pi_2$ be an arbitrary
directed path from $\sigma$ to $\alpha$.  
For $j = 1, 2$, let $\varepsilon_j \in \edges(\Gammaarrow)$
be the last edge of $\pi_j$ and let
$\rho_j$ be the remainder of the path $\pi_j$, so
$\pi_j = \rho_j \varepsilon_j$, where juxtaposition indicates 
concatination of paths. Thus $\varepsilon_1$ takes the
form \solidedge{$\beta$}{$\alpha$}{$s$} for some $s\in S$ and
$\beta \in \vertices(\Gamma)$ with 
$\beta \in \closedray{\sigma}$ and $\mu(\sigma,\beta) = k-1$.  
Also, $\varepsilon_2$ has the form 
\solidedge{$\gamma$}{$\alpha$}{$t$}
for some $t\in S$, $\gamma \in \vertices(\Gamma)$.  
If $t = s$, then $\gamma = \beta$, so $\rho_1\equiv \rho_2$ by
(i) applied to $\beta$, and thus $\pi_1 \equiv \pi_2$ as desired.
Suppose $t \ne s$.  Let $\tau$ be the source of the connected component
$C$ of $(\Gammaarrow)_{\set{s,t}}$ whose sink is $\alpha$. 
(Note that $C$ has a unique source by the classification of possible
connected
components of $\Gamma_{\set{s,t}}$ given in Theorem~\ref{theorem:main}.)
Let $\nu_1$ ($\nu_2$) be the directed path in $C$
from $\tau$ to $\beta$ ($\gamma$, respectively), so
$\nu_1 \varepsilon_1 \equiv_{s,t} \nu_2 \varepsilon_2$.  Since
$\beta \in \closedray{\tau}$, we have $\tau \in \closedray{\sigma}$ by
(ii) applied to $\beta$, and so there is some directed path $\rho$ from
$\sigma$ to $\tau$ in $\Gammaarrow$.  
(See Figure~\ref{fig:newclaimfig1}, in which edges represent 
directed paths in $\Gammaarrow$.)
\begin{figure}[ht]
\centering
\begin{tikzpicture}
[node distance=0.6cm,%
pre/.style={<-,shorten <=1pt,>=angle 45},%
post/.style={->,shorten >=1pt,>=angle 45}];%
rectangle/.style={inner sep=0pt,minimum size=5mm}];%
\node[rectangle] (sigma)								{${\strut}\sigma$};
\node[rectangle] (tau) [right=of sigma,xshift=15mm]			{${\strut}\tau$}
	edge[pre]		node[yshift=2mm,xshift=4mm] {$\rho$}	(sigma);
\node[rectangle] (ghost) [right=of tau,xshift=3mm]			{$\strut$};
\node[rectangle] (beta) [above=of ghost]					{$\beta$}
	edge[pre]		node[xshift=2mm,yshift=-2mm] {$\nu_1$}		(tau)
	edge[pre]		node[xshift=-2mm,yshift=2mm] {$\rho_1$}	(sigma);
\node[rectangle] (gamma) [below=of ghost]				{${\strut}\gamma$}
	edge[pre]		node[xshift=2mm,yshift=2mm] {$\nu_2$}		(tau)
	edge[pre]		node[xshift=-2mm,yshift=-2mm] {$\rho_2$}	(sigma);
\node[rectangle] (alpha) [right=of ghost,xshift=3mm]		{${\strut}\alpha$}
	edge[pre]		node[xshift=2mm,yshift=2mm] {$\varepsilon_1$} (beta)
	edge[pre]		node[xshift=2mm,yshift=-2mm] {$\varepsilon_2$} (gamma);
\end{tikzpicture}
\caption{}
\label{fig:newclaimfig1}
\end{figure}
We have
$\rho_1 \equiv \rho \nu_1$ by (i) applied to $\beta$, and thus
$\rho \nu_1$ has $k-1$ edges.
Since $\nu_1$ and $\nu_2$ have the same number of edges, it
follows that $\rho \nu_2$ also has $k-1$ edges, and thus 
$\mu(\sigma,\gamma) \le k-1$.  Hence by (i) applied to 
$\gamma$, we also have 
$\rho \nu_2 \equiv \rho_2$.  
Thus
\begin{equation*}
\begin{split}
\pi_1 
& =  \rho_1 \varepsilon_1
\equiv (\rho \nu_1) \varepsilon_1
= \rho ( \nu_1 \varepsilon_1 ) \cr
 & \equiv \rho ( \nu_2 \varepsilon_2)
 = (\rho \nu_2 ) \varepsilon_2
\equiv \rho_2 \varepsilon_2 = \pi_2 . 
 \end{split}
 \end{equation*}
Therefore (i) holds for $\alpha$.

Now suppose
 $\alpha \in \closedray{\delta}$.  
Let $\psi$ be a directed path from $\delta$
to $\alpha$.  Write $\psi = \psi_0 \varepsilon_0$, where
$\varepsilon_0 \in \edges(\Gammaarrow)$ is the last edge of $\psi$, 
so $\varepsilon_0$ has the form
\solidedge{$\phi$}{$\alpha$}{$r$} for some $r \in S$,
$\phi \in \vertices(\Gamma)$.
If $r = s$, then $\beta = \phi \in \closedray{\delta}$, and hence
$\delta \in \closedray{\sigma}$ by (ii) applied to $\beta$.  
Assume $r \ne s$.  Let $\kappa$ be the source
of the connected component of $(\Gammaarrow)_{\set{r,s}}$ 
whose sink is $\alpha$.  
(See Figure~\ref{fig:newequivdiag2}, in which the edges 
represent directed paths in $\Gammaarrow$.) 
There exists some directed path from $\sigma$
to $\kappa$ by (ii) 
\begin{figure}[ht]
\centering
\begin{tikzpicture}
[node distance=0.6cm,%
pre/.style={<-,shorten <=1pt,>=angle 45},%
post/.style={->,shorten >=1pt,>=angle 45}];%
rectangle/.style={inner sep=0pt,minimum size=5mm}];%
\node[rectangle] (sigma)								{${\strut}\sigma$};
\node[rectangle] (delta) [below=of sigma,xshift=12mm]		{${\strut}\delta$}
	edge[pre,dashed]	node[xshift=-2mm,yshift=-2mm] {?}	(sigma);
\node[rectangle] (kappa) [right=of sigma,xshift=15mm]		{${\strut}\kappa$}
	edge[pre]										(sigma);
\node[rectangle] (ghost) [right=of tau,xshift=3mm]			{$\strut$};
\node[rectangle] (beta) [above=of ghost]					{$\beta$}
	edge[pre]										(kappa)
	edge[pre]		node[xshift=-2mm,yshift=2mm] {$\rho_1$}	(sigma);
\node[rectangle] (phi) [below=of ghost]					{${\strut}\phi$}
	edge[pre]		node[yshift=-2mm] {$\psi_0$}			(delta)
	edge[pre]										(kappa);
\node[rectangle] (alpha) [right=of ghost,xshift=3mm]		{${\strut}\alpha$}
	edge[pre]		node[xshift=2mm,yshift=2mm] {$\varepsilon_1$} (beta)
	edge[pre]		node[xshift=2mm,yshift=-2mm] {$\varepsilon_0$} (phi);
\end{tikzpicture}
\caption{}
\label{fig:newequivdiag2}
\end{figure}
applied to $\beta$.
The argument given above for $\gamma$ applies to show that
$\mu(\sigma,\phi) \le k-1$.  Since 
$\phi \in \closedray{\delta}$, it follows that
$\delta \in \closedray{\sigma}$ by (ii) applied to $\phi$.
Hence (ii) holds for $\alpha$, so the proof of (i) and (ii) is complete. 

Finally, suppose $\Gamma$ is connected.
For $\alpha\in \vertices(\Gamma)$, let 
 $\delta(\alpha)$  be the minimal number of edges in
a path in $\Gammaundir$ from $\sigma$ to $\alpha$.  
We prove $\alpha \in \closedray{\sigma}$ for all
 $\alpha\in\vertices(\Gamma)$
by induction on $\delta(\alpha)$.  
If $\delta(\alpha) = 0$, then $\alpha = \sigma  \in \closedray{\sigma}$.
Suppose 
$\delta(\alpha) = \ell > 0$ and $\gamma  \in \closedray{\sigma}$ whenever
$\gamma \in \vertices(\Gamma)$ and $\delta(\gamma) < \ell$.  
Let $\beta$ ----- $\alpha$
 be the last edge of
a path in $\Gammaundir$ from $\sigma$ to $\alpha$ of length $\ell$, so
$\delta(\beta) = \ell-1$ and $\beta \in \closedray{\sigma}$.  
If 
$\reversedsolidedge{$\alpha$}{$\beta$}{$s$} \in \edges(\Gammaarrow)$ for some
$s \in S$, then 
$\alpha \in \closedray{\sigma}$ because
$\beta \in \closedray{\sigma}$ and $\alpha \in \closedray{\beta}$.  
On the other hand, if
$\solidedge{$\alpha$}{$\beta$}{$s$} \in \edges(\Gammaarrow)$, 
then $\beta \in \closedray{\alpha}$, and so $\alpha \in \closedray{\sigma}$
 by (ii) applied to $\beta$.
Hence  $\alpha \in \closedray{\sigma}$
for all $\alpha\in \vertices(\Gamma)$, and 
therefore $\vertices(\Gamma) = \closedray{\sigma}$.
This completes the proof.
\end{proof}


\begin{example}
\label{example:inequiv}
Let $W = W(A_3) = \spanof{r,s,t} $, with
$n(r,s) = 3 = n(s,t)$, $n(r,t)=2$, and let $\Gamma$ be as
in Figure~\ref{fig:nonequiv}. 
The directed paths
\solidrightsolidright{$\alpha_2$}{$\alpha_3$}{$s$}{$\beta_3$}{$r$}
and
\solidrightsolidright{$\alpha_2$}{$\beta_2$}{$t$}{$\beta_3$}{$s$}
from $\alpha_2$ to $\beta_3$ are not in the same
$\equiv$-equivalence class (even though adjoining the edge
\solidedge{$\alpha_1$}{$\alpha_2$}{$r$} to both does produce two 
equivalent paths).
Therefore the conclusion of
Lemma~\ref{lemma:locallyfinite}(i) does not apply
to arbitrary directed paths in a $W$-digraph.
\begin{figure}[ht]
\centering
\begin{tikzpicture}
[node distance=0.8cm,%
pre/.style={<-,shorten <=1pt,>=angle 45},%
post/.style={->,shorten >=1pt,>=angle 45}];%
rectangle/.style={inner sep=0pt,minimum size=5mm}];%
\tikzstyle{mydot}=[circle,on grid,draw=black,fill=black,inner sep=0pt,minimum size=1mm];%
\node[rectangle] (a1)									{${\strut}\alpha_1$};
\node[rectangle] (a2) [right=of a1]							{${\strut}\alpha_2$}
	edge [pre]				node[yshift=2.5mm]	{$r$}			(a1);
\node[rectangle] (a3) [right=of a2]							{${\strut}\alpha_3$}
	edge [pre]				node[yshift=2.5mm]	{$s$}			(a2);
\node[rectangle] (a4) [right=of a3,xshift=3mm]					{${\strut}\alpha_4$}
	edge [pre]				node[yshift=2.5mm]	{$t$}			(a3);
\node[rectangle] (b1) [below=of a1,yshift=-3mm]				{${\strut}\beta_1$}
	edge [pre,bend left=20]	node[xshift=-2.5mm]	{$s$}			(a1)
	edge [pre,bend right=20]	node[xshift=2.5mm]	{$t$}			(a1);
\node[rectangle] (b2) [right=of b1]							{${\strut}\beta_2$}
	edge [pre]				node[yshift=2.5mm]	{$r$}			(b1)
	edge [pre]				node[xshift=2.5mm]	{$t$}			(a2);
\node[rectangle] (b3) [right=of b2]							{${\strut}\beta_3$}
	edge [pre]				node[yshift=2.5mm]	{$s$}			(b2)
	edge [pre]				node[xshift=2.5mm]	{$r$}			(a3);
\node[rectangle] (b4) [below=of a4,yshift=-3mm]				{${\strut}\beta_4$}
	edge [pre]				node[yshift=2.5mm]	{$t$}			(b3)
	edge [pre,bend left=20]	node[xshift=-2.5mm]	{$r$}			(a4)
	edge [pre,bend right=20]	node[xshift=2.5mm]	{$s$}			(a4);
\end{tikzpicture}
\caption{Digraph for Example~\ref{example:inequiv}}
\label{fig:nonequiv}
\end{figure}
\end{example}
  

We now prove Theorems~\ref{theorem:acyclic}, 
\ref{theorem:linearcharmults}, 
\ref{theorem:index}, 
and \ref{theorem:equallengths}.  

\begin{proof}[Proof of Theorem~\ref{theorem:acyclic}]
Assume $n(s,t) < \infinity$ for all $s, t \in S$ and
$\Gamma$ is a connected $W$-digraph.
Since $\Gammarev$ is also a connected $W$-digraph
by Corollary~\ref{corollary:reversed}, it is enough to
prove the assertions involving sources.
Suppose $\sigma$ is a source of $\Gamma$.
Then $\vertices(\Gamma) = \closedray{\sigma}$ by
Lemma~\ref{lemma:locallyfinite} (iii).  Hence if
$\gamma \ne \sigma$ is a vertex of $\Gamma$, there
must be some nonempty directed path in $\Gamma$
from $\sigma$ to $\gamma$, and so $\gamma$ 
cannot be a source.  Thus $\sigma$ is the unique source
of $\Gamma$, so part (i) of the theorem holds.

Suppose $\Gamma$ has source $\sigma$ but  is not
acyclic.  Let $\alpha \in \vertices(\Gamma)$ be
 contained in a nonempty directed circuit $\rho$ in $\Gamma$.
Since $\alpha \in \closedray{\sigma}$, there is some directed
path $\pi$ from $\sigma$ to $\alpha$.
Then the directed paths $\pi$ and $\pi \rho$ from $\sigma$ to
$\alpha$ are in different $\equiv$ equivalence classes 
because their lengths are different, contradicting 
Lemma~\ref{lemma:locallyfinite} (i).  Thus part (ii) of the theorem holds.

Finally, assume $(W,S)$ is finite.
By Lemma~\ref{lemma:h0sinks}, 
$\Gammarev$ 
has a sink.  Thus 
$\Gamma$ has a source, so part (iii) of the
theorem holds.
\end{proof}


\begin{proof}[Proof of Theorem~\ref{theorem:linearcharmults}]
Assume $\vertices(\Gamma)$ is finite.
For a linear character $\lambda$ of $H$, it is easily seen that
$M(\Gamma)_\lambda$ is the direct sum of $M(C)_\lambda$
as $C$ ranges over the connected components of $\Gamma$.
By Lemma~\ref{lemma:eigenvalues}(i), 
if $C$ is a connected component of $\Gamma$, 
then $v \in M(C)_{\ind}$ if and only if 
$v$ is a scalar multiple of $\sum_{\alpha \in \vertices(C)} \alpha$.  
Thus (i) holds.

Suppose now that $n(s,t) < \infinity$ for $s, t \in S$.
Let $C$ be a connected component of $\Gamma$.  
Assume $C$ is acyclic.
Then since $\vertices(C)$ is finite, 
there must be a source $\sigma$ in $C$, and this source 
is unique by Theorem~\ref{theorem:acyclic}(i).  
Assign to each solid edge in $C$ the weight
$-1/u^2$, and to each dashed edge in $C$ assign
the weight $-(u+1)/(u^2-u)$. 
For $\alpha \in \vertices(C)$, 
let $\mu_\alpha$ be the product of the weights of the edges
of any directed path from $\sigma$ to $\alpha$ in $C$: 
$\mu_\alpha$ is well-defined by
Lemma~\ref{lemma:locallyfinite}(i) since such products are constant
on $\equiv$-equivalence classes. 
If \solidedge{$\alpha$}{$\beta$}{$s$} is an edge of $C$, then
$\mu_\beta = - \mu_\alpha / u^2$, while if
\dashededge{$\alpha$}{$\beta$}{$s$} is an edge of $C$, then
$\mu_\beta = -(u+1) \mu_\alpha / (u^2-u)$.
By Lemma~\ref{lemma:eigenvalues}(ii),
$v \in M(C)_{\sgn}$ 
if and only if $v$ is
a scalar multiple of $\sum_{\alpha \in \vertices(C)} \mu_\alpha \alpha$.
Therefore $\dim M(C)_{\sgn} = 1$.  

Conversely, suppose  
$v = \sum_{\alpha \in \vertices(C)} \nu_\alpha \alpha \in M(C)_{\sgn}$
is nonzero.
Since at least one of
the coefficients $\nu_\alpha$ is nonzero and $C$ is connected, 
all of the coefficients $\nu_\alpha$ 
are nonzero by Lemma~\ref{lemma:eigenvalues}(ii).  
Assume 
\[
\begin{tikzpicture}
[node distance=0.8cm,%
pre/.style={<-,shorten <=1pt,>=angle 45},%
post/.style={->,shorten >=1pt,>=angle 45}];%
rectangle/.style={inner sep=0pt,minimum size=5mm}];%
\node[rectangle]	(node0)						{${\strut}\gamma_{0}$};
\node[rectangle]	(node1) [right=of node0]			{${\strut}\gamma_{1}$}
	edge [pre]	node[yshift=3mm]	{$s_1$}		(node0);
\node[rectangle]	(node2) [right=of node1]		{${\strut}\gamma_{2}$}
	edge [pre]		node[yshift=3mm]	{$s_2$}	(node1);
\node[rectangle]	(node3) [right=of node2]	{${\strut}\cdots$}
	edge [pre]		node[yshift=3mm]	{$s_3$}		(node2);
\node[rectangle]	(node4) [right=of node3]	{${\strut}\gamma_{k-1}$}
	edge [pre]		node[yshift=3mm]	{$s_{k-1}$}	(node3);
\node[rectangle]	(node5) [right=of node4]				{${\strut}\gamma_{k}$}
	edge [pre]			node[yshift=3mm]	{$s_{k}$}	(node4);
\end{tikzpicture}
\]
is a directed path in $C_{\to}$, where
$k > 0$, $s_1, s_2, \dots, s_k \in S$.
For $1 \le j \le k$ 
we have
\begin{equation*}
\nu_{\gamma_j}
=
\begin{cases}
- \displaystyle{\frac{1}{u^2}} \nu_{\gamma_{j-1}} 
& \text{if } \solidedge{$\gamma_{j-1}$}{$\gamma_j$}{$s_j$}
  \in \edges(C), \cr
- \displaystyle{\frac{u+1}{u^2-u}} \nu_{\gamma_{j-1}}
& \text{if } \dashededge{$\gamma_{j-1}$}{$\gamma_j$}{$s_j$}
  \in \edges(C), \cr
\end{cases}
\end{equation*}
If  $\gamma_0 = \gamma_k$, then
\[
\nu_{\gamma_0}
=
\nu_{\gamma_k}
=
\nu_{\gamma_0} 
  \prod_{j = 1}^{k} \frac{\nu_{\gamma_j}}{\nu_{\gamma_{j-1}}},
\]
and therefore
$\prod_{j = 1}^{k} (\nu_{\gamma_j} / \nu_{\gamma_{j-1}})= 1$, 
which is impossible since the product is equal to 
$\pm u^{-2(k-j)} (u+1)^{j} (u^2-u)^{-j}$ for
some $j$ with 
$0 \le j \le k$.  Therefore $C$ is acyclic, so the
proof of (ii) is complete.
\end{proof}


\begin{proof}[Proof of Theorem~\ref{theorem:index}]
Assume  $(W,S)$ is a finite Coxeter system, $\Gamma$ is a
connected $W$-digraph,
and $J \subseteq S$. 
Let
\[
\Gamma_J = \bigcup_{i \in I} C_i
\]
be the decomposition of $\Gamma_J$ into its connected components, 
indexed by some set $I$.
Let $\sigma_i \in \vertices(\Gamma)$ be the source of $C_i$.  
If $\sigma$ is the source of $\Gamma$, then
in $M(\Gamma)_0$ we have 
$\sigma_i = \pm a_{x(i)} \sigma$ for some $x(i) \in W$ by
Lemma~\ref{lemma:locallyfinite}(iii) and 
Lemma~\ref{lemma:h0paths}(ii).  
Since $\sigma_i$ is the source of $C_i$, 
we have $a_s \sigma_i \ne - \sigma_i$ for $s \in J$, so 
$a_s a_{x(i)} \ne -a_{x_(i)}$, and thus  $sx(i) > x(i)$, for all $s \in J$.  Hence
$x(i)$ is in the set of distinguished right coset representatives 
$X_J = \setof{w \in W}{s w > w \text{ for }s \in J}$ of $W_J$ in $W$.  
Since $\sigma_i \ne \sigma_j$ when
$i \ne j$ are in $I$, 
$i \mapsto x(i)$ is an injection from $I$ into $X_J$.
\end{proof}


\begin{example}
\label{example:regular}
Let $(W,S)$ be a Coxeter system.  
Let $\Gamma$ be the $W$-digraph defined by
$\vertices(\Gamma) = W$ and
$\solidedge{$x$}{$y$}{$s$}\in \edges(\Gamma)$ if and
only if $x < sx = y$ for $x, y \in W$, $s \in S$. 
Then the $H$-module $M(\Gamma)$ 
afforded by $\Gamma$ is isomorphic to 
 the left regular module $H$.  
Note that $\Gamma$ is connected since if 
$w = s_k s_{k-1} \cdots s_1$ is a reduced expression for
$w \in W$ and $x_j = s_j s_{j-1} \cdots s_1$ 
for $0 \le j \le k$
(with $x_0 = e$), then
\begin{equation*}
\begin{tikzpicture}
[node distance=0.8cm,%
pre/.style={<-,shorten <=1pt,>=angle 45},%
post/.style={->,shorten >=1pt,>=angle 45}];%
rectangle/.style={inner sep=0pt,minimum size=5mm}];%
\node[rectangle]	(node0)						{${\strut}x_{0}$};
\node[rectangle]	(node1) [right=of node0]			{${\strut}x_{1}$}
	edge [pre]	node[yshift=3mm]	{$s_1$}		(node0);
\node[rectangle]	(node3) [right=of node1]	{${\strut}\cdots$}
	edge [pre]		node[yshift=3mm]	{$s_2$}	(node1);
\node[rectangle]	(node4) [right=of node3]	{${\strut}x_{k-1}$}
	edge [pre]		node[yshift=3mm]	{$s_{k-1}$}	(node3);
\node[rectangle]	(node5) [right=of node4]				{${\strut}x_{k}$}
	edge [pre]			node[yshift=3mm]	{$s_{k}$}	(node4);
\end{tikzpicture}
\end{equation*}
is a directed path from $e$ to $w$ in $\Gamma$.
When $(W,S)$ is finite, this
example shows that the bound in Corollary~\ref{corollary:bound} is
always attained. 
\end{example}


\begin{proof}[Proof of Theorem~\ref{theorem:equallengths}]
Suppose $n(s,t) < \infinity$ for all $s, t \in S$ and $\Gamma$ is a
connected $W$-digraph with source $\sigma$.  
(The case in which $\Gamma$ has a sink
follows by applying the same reasoning to $\Gammarev$.)
Let $\pi_1$, $\pi_2$ be two directed paths in $\Gamma$ from
$\alpha$ to $\beta$.  Let $\rho$ be some directed path from 
$\sigma$ to $\alpha$: such a path exists
by Lemma~\ref{lemma:locallyfinite} (iii).  By 
Lemma~\ref{lemma:locallyfinite} (i), the directed paths $\rho \pi_1$ and
$\rho \pi_2$ from $\sigma$ to $\beta$ are in the same
$\equiv$ equivalence class, and thus have the same number of
edges.   Hence $\pi_1$ and $\pi_2$ have the same number of edges.
\end{proof}


\section{The proof of Theorem~\ref{theorem:charactervalues}}
\label{section:charactervalues}

Let $\sigma$ be an automorphism of $\rationals(u)$, and 
let $M$ be a vector space over $\rationals(u)$.  Let ${}^\sigma M$ be
the vector space over $\rationals(u)$ that has the same additive
group as $M$ and scalar multiplication 
$(\alpha,v) \mapsto \alpha *_\sigma v$ given by 
\[
\alpha *_\sigma v = ({}^\sigma \alpha)  v,
\]
where the scalar multiplication on the right hand side
is that of $M$.  
It is clear that if $Y \subseteq M$, then $Y$ is a 
basis (subspace) of $M$ if and only if $Y$ is
a basis (subspace, respectively) of ${}^\sigma M$.  
Moreover, 
$\gl(M) = \gl({}^\sigma M)$ since if
$\varphi : M \rightarrow M$ is an additive mapping, then   
\[
\varphi(\alpha *_\sigma v) = \alpha *_\sigma \varphi(v)
 \iff
\varphi ( ({}^\sigma \alpha) v ) = ({}^\sigma \alpha) \varphi(v)
\]
for 
$\alpha\in \rationals(u)$, $v\in M$.


\begin{lemma}
\label{lemma:revequiv1}
Let $(W,S)$ be a Coxeter system, and let
$M=M(\Gamma)$ be the $H$-module afforded by the $W$-digraph $\Gamma$.
Let $\sigma$ be the automorphism of $\rationals(u)$ determined
by ${}^\sigma u = -1/u$.  
For $s\in S$, let $\tau_s \in \gl(M)$ be the operator 
$v \mapsto T_s v$.  
Then $T_s \mapsto \tau_s^{-1}\in \gl(^\sigma M)$ extends to a 
representation $H \rightarrow \gl({}^\sigma M)$.  Moreover,
as a basis for the $H$-module ${}^\sigma M$, 
$X = \vertices(\Gamma)$ supports the 
$W$-digraph $\Gammarev$.
\end{lemma}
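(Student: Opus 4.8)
The plan is to verify the two assertions separately, using the presentation of $H$ by the quadratic relation \eqref{eq:quadraticrelation} and the braid relations \eqref{eq:dihedralrelation}, together with the identity $\gl(M) = \gl({}^\sigma M)$ recorded above. The key bookkeeping device is that a scalar $\alpha \in \rationals(u)$, viewed in the $\rationals(u)$-algebra $\gl({}^\sigma M)$, acts on the underlying space $M$ as multiplication by ${}^\sigma\alpha$; in particular the scalar $u^2$ of $\gl({}^\sigma M)$ is multiplication by ${}^\sigma(u^2) = u^{-2}$ on $M$, while composition of operators is the same in $\gl(M)$ and $\gl({}^\sigma M)$.

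For the representation, I would check that $T_s \mapsto \tau_s^{-1}$ respects both families of defining relations. By \eqref{eq:tauquadraticrelation} the operator $\tau_s$ is invertible with eigenvalues $u^2$ and $-1$ on $M$, so $\tau_s^{-1}$ has eigenvalues $u^{-2}$ and $-1$ and satisfies $(\tau_s^{-1} - u^{-2})(\tau_s^{-1}+1) = 0$ in $\gl(M)$. Reading the left-hand side in $\gl({}^\sigma M)$, where the scalar $u^2$ denotes multiplication by $u^{-2}$ on $M$, this is precisely the quadratic relation \eqref{eq:quadraticrelation} for $\tau_s^{-1}$. The braid relations \eqref{eq:dihedralrelation} are scalar-free, so taking inverses of both sides of the identity $\overbrace{\tau_s\tau_t\cdots}^{n} = \overbrace{\tau_t\tau_s\cdots}^{n}$ from \eqref{eq:taudihedralrelation} yields the braid relation for $\tau_s^{-1}$ and $\tau_t^{-1}$, an identity valid in $\gl(M) = \gl({}^\sigma M)$. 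Hence $T_s \mapsto \tau_s^{-1}$ extends to a homomorphism $\rho' : H \to \gl({}^\sigma M)$.

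To identify the supported digraph, I would note first that $X = \vertices(\Gamma)$ is a basis of ${}^\sigma M$, and that for each $s$ the operator $\tau_s^{-1}$ preserves each two-element subspace $\linspan\set{\alpha,\beta}$ indexed by a block of the partition $P_s$ of Lemma~\ref{lemma:partitions} for $\Gamma$, since $\tau_s$ does. It then suffices to run through the two edge types. If \solidedge{$\alpha$}{$\beta$}{$s$} is an edge of $\Gamma$, then $\tau_s\alpha = \beta$, so under $\rho'$ we have $T_s\beta = \tau_s^{-1}\beta = \alpha$, which is the solid edge \solidedge{$\beta$}{$\alpha$}{$s$}, the reversal. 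If \dashededge{$\alpha$}{$\beta$}{$s$} is an edge of $\Gamma$, then $T^{\circ}_s\alpha = \beta$ in $M$, and I would compute $\rho'(T^{\circ}_s)\beta$ directly: expanding $T^{\circ}_s = (u+1)^{-1}(T_s - u)$ under $\rho'$, with each scalar replaced by its image under $\sigma$, gives the $M$-operator $v \mapsto \tfrac{u}{u-1}\tau_s^{-1}(v) + \tfrac{1}{u-1}v$; using $\tau_s^{-1} = u^{-2}(\tau_s - (u^2-1))$ and the dashed-edge values of $\tau_s\alpha$, $\tau_s\beta$, this sends $\beta$ to $\alpha$, producing the dashed edge \dashededge{$\beta$}{$\alpha$}{$s$}, again the reversal. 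By Lemma~\ref{lemma:partitions} and the construction of the $W$-digraph associated to a supporting basis, $X$ therefore supports a $W$-digraph whose every edge is obtained from the corresponding edge of $\Gamma$ by reversing its direction while preserving its type; that is, it supports $\Gammarev$.

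The main obstacle is the dashed case: one must carry the $\sigma$-twist through the coefficients $(u+1)^{-1}$ and $u$ of $T^{\circ}_s$ before applying the operator, and only after these substitutions does the computation collapse to $\beta \mapsto \alpha$. The solid case and the verification of the relations are routine by comparison.
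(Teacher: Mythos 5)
Your proposal is correct and follows essentially the same route as the paper: twist scalars through $\sigma$, observe that the quadratic relation for $\tau_s^{-1}$ read in $\gl({}^\sigma M)$ becomes \eqref{eq:quadraticrelation} and that the braid relations invert, then verify edge-by-edge that the inverse operators realize the reversed edges. The only (immaterial) difference is in the dashed case, where the paper records the four coefficient formulas $\tau_s^{-1}(\alpha)$, $\tau_s^{-1}(\beta)$ in ${}^\sigma M$ and matches them against \eqref{eq:taudefinition} for $\Gammarev$, whereas you compute $\rho'(T^{\circ}_s)\beta=\alpha$ and invoke the supporting-basis construction following Lemma~\ref{lemma:submodule}; both computations are equivalent and your $\sigma$-twisted expansion of $T^{\circ}_s$ checks out.
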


\begin{proof}
Let $s \in S$.  Since $(\tau_s - u^2)(\tau_s + 1) = 0$ in $\gl(M)$,
we have $(\tau_s - u^{-2})(\tau_s +1) = 0$ in $\gl({}^\sigma M)$, and
thus
$(\tau_s^{-1} - u^2)(\tau_s^{-1} + 1) = 0$ in $\gl({}^\sigma M)$.
Also, 
if $s,t\in S$ and $1< n(s,t) < \infinity$,
then
\begin{equation*}
\overbrace{\tau_s^{-1} \tau_t^{-1} \cdots }^{n(s,t)}
=
(\overbrace{\cdots \tau_t \tau_s}^{n(s,t)})^{-1}
=
(\overbrace{\cdots \tau_s \tau_t}^{n(s,t)})^{-1}
=\overbrace{\tau_t^{-1} \tau_s^{-1} \cdots }^{n(s,t)}.
\end{equation*}
Therefore $T_s \mapsto \tau_s^{-1}$ extends to a 
representation $H \rightarrow \gl({}^\sigma M)$.  

Now suppose 
$\alpha, \beta\in X$, $s\in S$.
If \solidedge{$\alpha$}{$\beta$}{$s$} is an edge of $\Gamma$, then
one checks that 
\begin{equation*}
\tau_s^{-1} (\alpha)
=
(u^2-1) *_\sigma \alpha + u^2 *_\sigma \beta
\quad\text{and}\quad
\tau_s^{-1} (\beta)
=  \alpha
\end{equation*}
in ${}^\sigma M$. On the other hand,
if \dashededge{$\alpha$}{$\beta$}{$s$} is an edge of $\Gamma$, 
then
\begin{equation*}
\tau_s^{-1} (\alpha) 
=
(u^2-u-1) *_\sigma \alpha + (u^2-u) *_\sigma \beta
\quad\text{and}\quad
\tau_s^{-1} (\beta) 
=
(u + 1 ) *_\sigma \alpha + u *_\sigma \beta
\end{equation*}
in ${}^\sigma M$.  
These relations 
show that the basis $X$ for 
${}^\sigma M$ supports the $W$-digraph
$\Gammarev$, so the proof is complete.
\end{proof}



For a matrix $A$ over $\rationals(u)$, denote
by ${}^\sigma \!A$ the matrix obtained by applying
the automorphism $\sigma$ of $\rationals(u)$ to
each entry of $A$. 

\begin{corollary}
\label{corollary:revequiv1}
Suppose $\Gamma$ is a $W$-digraph, 
$X = \vertices(\Gamma)$ is finite, and $\sigma$ is the
automorphism of $\rationals(u)$ determined by
${}^\sigma u = -1/u$.  Let 
$\rho$ and
 $\rho_{\text{rev}}$ 
 be the matrix representations relative to the
 basis $X$ for
 the actions of $H$ on $M=M(\Gamma)$ and ${}^\sigma\! M$
 according to the $W$-digraphs $\Gamma$ and
 $\Gammarev$, respectively.  
Then
\begin{equation}
\label{eq:reveqn1}
 \rho_{\text{rev}}(T_w)
= {}^\sigma \rho(T_{w^{-1}}^{-1})
\end{equation}
for $w \in W$.
\end{corollary}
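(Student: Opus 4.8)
The plan is to deduce the corollary directly from Lemma~\ref{lemma:revequiv1} by translating its operator statement into matrices relative to the basis $X$. By that lemma the $H$-module structure on ${}^\sigma M$ afforded by $\Gammarev$ is the one sending each $T_s$ to the operator $\tau_s^{-1}$, where $\tau_s\in\gl(M)$ is $v\mapsto T_s v$; hence $\rho_{\text{rev}}$ is precisely the matrix representation of this action relative to $X$, and in particular $\rho_{\text{rev}}$ is an algebra homomorphism. So the whole task reduces to (a) computing $\rho_{\text{rev}}(T_s)$ for generators and (b) propagating to arbitrary $w$.

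First I would record how the matrix of a fixed additive endomorphism changes on passing from $M$ to ${}^\sigma M$. If $\varphi\in\gl(M)=\gl({}^\sigma M)$ has matrix $A$ relative to $X$ in $M$, so $\varphi(\gamma)=\sum_\delta A_{\delta\gamma}\delta$, then rewriting the same equation with the twisted scalar multiplication $*_\sigma$ shows that the matrix of $\varphi$ relative to $X$ in ${}^\sigma M$ is ${}^\sigma\! A$, using that $\sigma$ is an involution (${}^\sigma(-1/u)=u$). Applying this to $\varphi=\tau_s$, whose matrix in $M$ is $\rho(T_s)$, the matrix of $\tau_s$ in ${}^\sigma M$ is ${}^\sigma\rho(T_s)$; since $X$ is finite, operators compose and invert as matrices, so the matrix of $\tau_s^{-1}$ in ${}^\sigma M$ is $({}^\sigma\rho(T_s))^{-1}$. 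As entrywise application of $\sigma$ commutes with matrix inversion and $\rho$ is a homomorphism,
\[
\rho_{\text{rev}}(T_s)
= ({}^\sigma\rho(T_s))^{-1}
= {}^\sigma\!\left(\rho(T_s)^{-1}\right)
= {}^\sigma\rho(T_s^{-1})
\]
for every $s\in S$, which is \eqref{eq:reveqn1} in the case $w=s$ (note $s^{-1}=s$).

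To reach an arbitrary $w\in W$ I would fix a reduced expression $w=s_1 s_2\cdots s_k$, so that $T_w=T_{s_1}\cdots T_{s_k}$ and $w^{-1}=s_k\cdots s_1$ is again reduced, giving $T_{w^{-1}}=T_{s_k}\cdots T_{s_1}$ and therefore $T_{w^{-1}}^{-1}=T_{s_1}^{-1}\cdots T_{s_k}^{-1}$. Using that $\rho_{\text{rev}}$ is a homomorphism, that entrywise application of $\sigma$ respects products of matrices, and the generator formula above,
\[
\rho_{\text{rev}}(T_w)
= \prod_{i=1}^{k}\rho_{\text{rev}}(T_{s_i})
= \prod_{i=1}^{k}{}^\sigma\rho(T_{s_i}^{-1})
= {}^\sigma\rho\!\left(T_{s_1}^{-1}\cdots T_{s_k}^{-1}\right)
= {}^\sigma\rho(T_{w^{-1}}^{-1}),
\]
which is the desired identity.

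Once Lemma~\ref{lemma:revequiv1} is in hand the computation is essentially forced, so there is no serious obstacle. The only point demanding care is the bookkeeping in the middle paragraph: that passing to ${}^\sigma M$ replaces the matrix $A$ of an operator by ${}^\sigma\! A$, that $\sigma$ is an involution, and that entrywise application of $\sigma$ commutes with both matrix multiplication and inversion (the last needing $X$ finite). With those conventions pinned down, the identity for generators propagates to all of $W$ through reduced expressions.
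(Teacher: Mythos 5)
Your proof is correct and follows essentially the same route as the paper: both obtain the generator identity $\rho_{\text{rev}}(T_s)={}^\sigma\rho(T_s^{-1})$ from Lemma~\ref{lemma:revequiv1} and then propagate it to all of $W$ via a reduced expression, using $T_{w^{-1}}^{-1}=T_{s_1}^{-1}\cdots T_{s_k}^{-1}$. The only (harmless) difference is that the paper reads the generator case directly off the explicit formulas in the lemma's proof, whereas you rederive it by the general observation that passing to ${}^\sigma M$ replaces the matrix of an operator by its entrywise $\sigma$-twist.
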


\begin{proof}
From the proof of Lemma~\ref{lemma:revequiv1}, we
have
$ \rho_{\text{rev}}(T_s) = {}^\sigma\rho(T_{s}^{-1})$
for $s \in S$.  The assertion follows since
if $w \in W$ has reduced expression $w = s_1 \cdots s_k$,
then $T_w = T_{s_1} T_{s_2} \cdots T_{s_k}$ and 
$T_{w^{-1}}^{-1} =
T_{s_1}^{-1} T_{s_2}^{-1} \cdots T_{s_k}^{-1}$. 
\end{proof}


Next assume  
$n(s,t) < \infinity$ for $s, t \in S$, $\Gamma$ is an 
 acyclic $W$-digraph, and $\vertices(\Gamma)$ is finite.  
For $\alpha \in X = \vertices(\Gamma)$, let
$\sigma_\alpha$ be the source in the connected component
of $\Gamma$ containing $\alpha$, and let $\mu(\alpha)$
be the number of edges in a directed path from
$\sigma_\alpha$ to $\alpha$.
(Thus $\mu(\alpha)$ is well-defined
by Lemma~\ref{lemma:locallyfinite}(i).)
Put $\varepsilon_\alpha = (-1)^{\mu(\alpha)}$ for $\alpha\in X$,
and define 
$X^\prime = \setof{\varepsilon_\alpha \alpha}{\alpha \in X}$.
Let $\rho^\prime$ be the matrix representation 
afforded by $M(\Gamma)$ with basis $X^\prime$, and
let $\rho_{\text{rev}}$ be the matrix representation
corresponding to $M(\Gammarev)$ with basis $X$.


\begin{lemma}
\label{lemma:revequiv2}
If $n(s,t) < \infinity$ for $s, t \in S$, $\Gamma$ is an 
acyclic $W$-digraph, $\vertices(\Gamma)$ is finite, 
and $\rho_{\text{rev}}$ and 
$\rho^\prime$ are defined as above, then
\begin{equation}
\label{eq:reveqn2}
\rho_{\text{rev}}(T_w)
=
\varepsilon_w u_w \rho^\prime( T_w^{-1})^{T}
\qquad
\text{for $w \in W$.}
\end{equation}
\end{lemma}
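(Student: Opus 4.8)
The plan is to verify the identity on the algebra generators $T_s$, $s\in S$, and then promote it to all of $W$ by multiplicativity. Write $G(w) = \varepsilon_w u_w \rho^\prime(T_w^{-1})^T$ for the right-hand side. Since $\rho_{\text{rev}}$ is a genuine representation, any reduced expression $w = s_1\cdots s_k$ gives $\rho_{\text{rev}}(T_w) = \rho_{\text{rev}}(T_{s_1})\cdots\rho_{\text{rev}}(T_{s_k})$. The first step is to record that $G$ enjoys the same factorization: whenever $\ell(xy) = \ell(x)+\ell(y)$ one has $T_{xy} = T_xT_y$, hence $T_{xy}^{-1} = T_y^{-1}T_x^{-1}$, and applying $\rho^\prime$ and then transposing reverses the order back to $\rho^\prime(T_x^{-1})^T\rho^\prime(T_y^{-1})^T$, while $\varepsilon_{xy}u_{xy} = (\varepsilon_x u_x)(\varepsilon_y u_y)$. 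This yields $G(xy) = G(x)G(y)$ on length-additive products, and running it along a reduced word gives $G(w) = G(s_1)\cdots G(s_k)$. Consequently it suffices to prove $\rho_{\text{rev}}(T_s) = G(s) = -u^2\,\rho^\prime(T_s^{-1})^T$ for each $s\in S$.

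For this generator identity I would combine the two descriptions already available. By Corollary~\ref{corollary:revequiv1} (with $s^{-1}=s$) we have $\rho_{\text{rev}}(T_s) = {}^\sigma\rho(T_s^{-1})$, where $\rho$ is the matrix representation of $\Gamma$ in the basis $X$. On the other side, $X^\prime$ is obtained from $X$ by the diagonal rescaling $\alpha\mapsto\varepsilon_\alpha\alpha$, so $\rho^\prime(h) = D\rho(h)D$ with $D = \operatorname{diag}(\varepsilon_\alpha)$, where $D = D^{-1} = D^T$; hence $\rho^\prime(T_s^{-1})^T = D\,\rho(T_s^{-1})^T D$. The claim for generators therefore reduces to the purely matrix-theoretic identity ${}^\sigma\rho(T_s^{-1}) = -u^2\, D\,\rho(T_s^{-1})^T D$.

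Both sides are block diagonal, with one $2\times2$ block for each edge of $\Gamma$ labeled $s$, so it is enough to check the identity block by block. Here acyclicity does the essential work: for an edge from $\alpha$ to $\beta$ labeled $s$, appending it to a directed path from the source $\sigma_\alpha$ to $\alpha$ produces a directed path to $\beta$ of length $\mu(\alpha)+1$, and since $\mu$ is well-defined by Lemma~\ref{lemma:locallyfinite}(i), this forces $\mu(\beta) = \mu(\alpha)+1$, whence $\varepsilon_\beta = -\varepsilon_\alpha$. Thus $D$ restricts to $\operatorname{diag}(\varepsilon_\alpha,-\varepsilon_\alpha)$ on the block, so conjugation by $D$ simply negates the off-diagonal entries. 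One then takes the $2\times2$ matrix of $\tau_s$ on the block (the solid and dashed cases of \eqref{eq:taudefinition}), inverts it, and checks directly that applying $\sigma$ (that is, $u\mapsto -1/u$) to the entries of the inverse produces the same matrix as negating the off-diagonal entries of its transpose and scaling by $-u^2$. Each of the solid and dashed cases reduces to a short determinant-and-substitution computation.

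The step I expect to carry the real content is the sign bookkeeping of the third paragraph: everything hinges on the relation $\varepsilon_\beta=-\varepsilon_\alpha$ across a directed edge, which is precisely where acyclicity and the well-definedness of $\mu$ (hence the hypothesis $n(s,t)<\infty$ entering through Lemma~\ref{lemma:locallyfinite}) are indispensable. Once that sign flip is secured, the block verification is a routine $2\times2$ calculation, and the passage from generators to arbitrary $w$ is immediate from the factorization of $G$ established at the outset.
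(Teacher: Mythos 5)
Your proof is correct and follows essentially the same route as the paper's: reduce to the generators via reduced expressions (the paper states this step tersely; your explicit factorization $G(xy)=G(x)G(y)$ on length-additive products is exactly what is meant), then verify the generator identity block by block on each edge labeled $s$, with the whole argument hinging on the sign relation $\varepsilon_\beta=-\varepsilon_\alpha$ across a directed edge, which you justify correctly from the well-definedness of $\mu$. The only cosmetic difference is that you obtain the left-hand $2\times 2$ block by routing through Corollary~\ref{corollary:revequiv1} and the substitution $u\mapsto -1/u$, whereas the paper computes the matrix of $T_s$ on $M(\Gammarev)$ directly from \eqref{eq:taudefinition}; both come down to the same pair of $2\times 2$ verifications for the solid and dashed cases.
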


\begin{proof}
Let $s \in S$.  
Suppose 
$\solidedge{$\alpha$}{$\beta$}{$s$} \in \edges(\Gamma)$, 
so also
$\solidedge{$\beta$}{$\alpha$}{$s$} \in \edges(\Gammarev)$. 
Thus in $M(\Gammarev)$ we have
\begin{equation*}
T_s \alpha
=  (u^2-1) \alpha + u^2 \beta 
\qquad\text{and}\qquad
T_s \beta
= \alpha ,
\end{equation*}
so the matrix of $T_s$ acting on the subspace
with  basis $\set{\alpha,\beta}$ is
\[
\begin{pmatrix}  
u^2-1 & 1 \\
u^2 & 0
\end{pmatrix} .
\]
On the other hand,  $\varepsilon_\beta = - \varepsilon_\alpha$ 
and $u_s T_s^{-1} = T_s - (u^2-1)$, so 
in $M(\Gamma)$ we have
\[
\varepsilon_s u_s T_s^{-1} \varepsilon_\alpha \alpha
=
- \varepsilon_{\alpha} \left(T_s - (u^2-1)\right) \alpha
=
- \varepsilon_{\alpha} \left(\beta - (u^2-1) \alpha \right)
=
(u^2-1)\varepsilon_\alpha \alpha + \varepsilon_\beta \beta
\]
and
\begin{equation*}
\begin{split}
\varepsilon_s u_s T_s^{-1}  \varepsilon_\beta \beta
& =
- \varepsilon_{\beta} \left(T_s - (u^2-1)\right) \beta \\
& = 
- \varepsilon_{\beta} \left( (u^2-1)\beta + u^2\alpha - (u^2-1)\beta\right) \\
& =
u^2 \varepsilon_\alpha \alpha, 
\end{split}
\end{equation*}
so the matrix of $\varepsilon_s u_s T_s^{-1}$ acting on the
subspace with  
basis $\set{\varepsilon_\alpha \alpha, \varepsilon_\beta \beta}$ is
\[
\begin{pmatrix}
u^2-1 & u^2 \\
1 & 0 
\end{pmatrix} .
\]

Now suppose that 
$\dashededge{$\alpha$}{$\beta$}{$s$} \in \edges(\Gamma)$.
Then in $M(\Gammarev)$ we have
\[
T_s \alpha
=  (u^2-u-1) \alpha + (u^2-u) \beta 
\]
and
\[
T_s  \beta
= (u+1) \alpha + u \beta,
\]
so the matrix of $T_s$ acting on the subspace
with  basis $\set{\alpha,\beta}$ is
\[
\begin{pmatrix}  
u^2-u-1 & u+1 \\
u^2 -u & u
\end{pmatrix} .
\]
In $M(\Gamma)$ we have
\begin{equation*}
\begin{split}
\varepsilon_s u_s T_s^{-1} \varepsilon_\alpha \alpha
& =
- \varepsilon_{\alpha} \left(T_s - (u^2-1)\right) \alpha \\
& =
- \varepsilon_{\alpha} \left(u \alpha + (u+1) \beta - (u^2-1) \alpha \right) \\
& =
(u^2-u-1)\varepsilon_\alpha \alpha + (u+1)\varepsilon_\beta \beta
\end{split}
\end{equation*}
and
\begin{equation*}
\begin{split}
\varepsilon_s u_s T_s^{-1}  \varepsilon_\beta \beta
& =
- \varepsilon_{\beta} \left(T_s - (u^2-1)\right) \beta \\
& = 
- \varepsilon_{\beta} \left( (u^2-u-1)\beta + (u^2-u)\alpha - (u^2-1)\beta\right) \\
& =
(u^2-u) \varepsilon_\alpha \alpha + \varepsilon_\beta \beta, 
\end{split}
\end{equation*}
so the matrix of $\varepsilon_s u_s T_s^{-1}$ acting on the
subspace with  
basis $\set{\varepsilon_\alpha \alpha, \varepsilon_\beta \beta}$ is
\[
\begin{pmatrix}
u^2-u-1 & u^2-u \\
u+1 & u 
\end{pmatrix} .
\]

To summarize,  
\eqref{eq:reveqn2} holds when $w = s \in S$.
The general case follows since if 
$w \in W$ has reduced expression
$w = s_1 s_2 \cdots s_k$, then
$T_w = T_{s_1}T_{s_2} \cdots T_{s_k}$.
\end{proof}


\begin{proof}[Proof of Theorem~\ref{theorem:charactervalues}]
Part (i) of the theorem follows by taking 
traces in \eqref{eq:reveqn1}.
Since  
$\chi_{\Gamma}$ coincides with the character afforded by the matrix
representation $\rho^\prime$,
part (ii) of the theorem follows by taking traces
in \eqref{eq:reveqn2}.
\end{proof}


\section{The proof of Theorem~\ref{theorem:wgraphacyclic}}
\label{section:wgraphacyclic}

\begin{proof}
Suppose $(W_J,J)$ is finite for proper subsets $J$ of $S$ and
$\Gamma$ is a finite, connected $W$-digraph.  
Suppose further that $M(\Gamma)$ is isomorphic to the
module $M(\Psi)$ afforded by a $W$-graph
 $\Psi$ for $(W,S)$ (in the sense of \cite{kazhdanlusztig}),
and that $\Gamma$ is not acyclic.  
For $x$ in the set of vertices $\vertices(\Psi)$ of $\Psi$, let
$I_x \subseteq S$ be the associated set of generators.
For $\beta \in \vertices(\Gamma)$, 
let $\In(\beta)$ be the set of $s\in S$ such that $\Gamma$ 
as an edge of the form 
\solidedge{$\alpha$}{$\beta$}{$s$} or
\dashededge{$\alpha$}{$\beta$}{$s$}
for some $\alpha \in \vertices(\Gamma)$.
Let $\chi_{\Gamma} = \chi_{\Psi}$ 
be the character of $H$ afforded by
$M(\Gamma)$ or $M(\Psi)$.  
Put
\[
N_\Gamma(J) = \abs{ \setof{\beta \in \vertices(\Gamma)} {\In(\beta) = J} },
\qquad
N_\Psi(J) = \abs{ \setof{x \in \vertices(\Psi) } {I_x = J}}
\]
for $J \subseteq S$.
Since
$M(\Gamma)_{\ind} \cong M(\Psi)_{\ind}$ is one-dimensional
by Theorem~\ref{theorem:linearcharmults}(i), 
we must have $N_\Psi(\emptyset) > 0$. 
Also, since $\Gamma$ is not acyclic,
$M(\Gamma)_{\sgn} = \set{0} = M(\Psi)_{\sgn}$ by
Theorem~\ref{theorem:linearcharmults}(ii), and therefore
$N_{\Psi}(S) = 0$.  Further, $\Gamma$ has no sink by
Theorem~\ref{theorem:acyclic}, and 
so also $N_{\Gamma}(S) = 0$.  
For $w \in W$, let $J(w)$ be the minimal $J \subseteq S$
such that $w \in W_J$.  Then
\[
\eval{\chi_{\Psi}(T_w)}{u}{0}
=
\varepsilon_w \abs{ \setof{x \in \vertices(\Psi) }{ J(w) \subseteq I_x } }.
\]
Since $I_x \ne S$ for $x \in \vertices(\Psi)$, we have
\begin{equation*}
\begin{split}
0 
< N_{\Psi} (\emptyset)
& =
\sum_{x \in \vertices(\Psi)}
    \sum_{w \in W_{I_x}} \varepsilon_w 
 =
\sum_{w \in W, J(w) \ne S}
  \varepsilon_w \abs{ \setof{x \in \vertices(\Psi)} {J(w) \subseteq I_x} } \cr
& =  
\sum_{w \in W, J(w) \ne S} 
    \eval{\chi_\Psi(T_w)}{u}{0}, \cr
\end{split}
\end{equation*}
with the sums finite by assumption. 
On the other hand, if $J(w) \ne S$, then 
$\Gamma_{J(w)}$ is acyclic by Theorem~\ref{theorem:acyclic}(iii),
so if
$\vertices(\Gamma)$ is ordered in a way consistent with
directed paths in $\Gamma_{J(w)}$, then the matrix representing
$T_w$ acting on $M(\Gamma)$, when evaluated at
$u=0$, is triangular.  Moreover, the nonzero diagonal entries 
of this matrix are all equal to
$\varepsilon_w$, occurring in positions
corresponding to those $\beta \in \vertices(\Gamma)$
such that $J(w) \subseteq \In(\beta)$.
Since $\In(\beta) \ne S$ for $\beta \in \vertices(\Gamma)$
and $\chi_\Gamma = \chi_\Psi$, it follows that
\begin{equation*}
\begin{split}
0 
<
\sum_{w \in W, J(w) \ne S}
   \eval{\chi_{\Gamma}(T_w)}{u}{0} 
& =
\sum_{w \in W, J(w) \ne S}
 \varepsilon_w \abs{ \setof{\beta \in \vertices(\Gamma)} {J(w) \subseteq \In(\beta)}} \cr
& =
\sum_{\beta \in \vertices(\Gamma)}
   \sum_{w \in W_{\In(\beta)}} \varepsilon_w  
 =
N_{\Gamma}(\emptyset), \cr
\end{split}
\end{equation*}
and so $\Gamma$ has a source.
Therefore 
$\Gamma$ is acyclic by Theorem~\ref{theorem:acyclic}.
\end{proof}


\section{Additional Examples}
\label{section:additionalexamples}

Let $(W,S)$ be a Coxeter system,  let
$\gamma\mapsto \overline{\gamma}$ be the
automorphism
of $\rationals(u)$ determined by $\overline{u} = u^{-1}$,
and let $h \mapsto \overline{h}$ be the
ring automorphism 
$
\sum_{w \in W} \gamma_w T_w 
\mapsto
\sum_{w\in W} \overline{\gamma_w} \, T_{w^{-1}}^{-1}
$
of $H$.
Following Lusztig \cite{lusztigbarop}, 
define a {\it bar operator} on an $H$-module $M$
to be an additive bijection 
$\varphi : M \rightarrow M$ such that
\begin{equation}
\label{eq:bar}
\varphi(h v) = \overline{h} \, \varphi(v)
\qquad
\text{for $h \in H$, $v \in M$.}
\end{equation}
Let $\Gamma_{*}$ be the $W$-digraph associated
with an involutory automorphism 
$w \mapsto w^*$ of $(W,S)$, as described before
Theorem~\ref{theorem:involutions}.
Lusztig has shown that
$M(\Gamma_{*})$ admits a unique bar operator 
that fixes the source of $\Gamma_{*}$
 (\cite{lusztigbarop}, Theorem~0.2).
It can be shown that if
$(W,S)$ is finite, then any $H$-module admits
a bar operator.  However, 
there need not be a bar operator if
$(W,S)$ is infinite, as the next example shows.


\begin{example}
\label{example:affinea2}
With 
$W = W(\widetilde{A}_2) = \spanof{r,s,t}$,  let
$\Gamma$ be as in Figure~\ref{fig:cyclic3}, 
and  put $w = t s r$.
Suppose a bar operator 
$\varphi$ exists on $M(\Gamma)$.  Let $\alpha$ be the vertex
in the lower left corner of Figure~\ref{fig:cyclic3}, so
$T_{tsr} \alpha = T_t T_s T_r \alpha = \alpha$.  Then 
$\overline{T_{tsr}}\varphi(\alpha) = \varphi(\alpha)$, so
$\varphi(\alpha) = T_{rst} \varphi(\alpha)$ is a fixed
point of $T_{rst}$.  
However, one checks that
the characteristic polynomial of $T_{rst}$ acting on $M(\Gamma)$
is 
$(\lambda^2+1)(\lambda^2-u^6)(\lambda-u^6)^2$, 
so a contradiction is obtained.
\begin{figure}[ht]
\centering
\begin{tikzpicture}
[node distance=0.8cm,%
pre/.style={<-,shorten <=2pt,>=angle 45},%
post/.style={->,shorten >=2pt,>=angle 45}];%
rectangle/.style={inner sep=0pt,minimum size=5mm}];%
\tikzstyle{mydot}=[circle,on grid,draw=black,fill=black,inner sep=1pt,minimum size=1mm];
\newcommand{\rone}{0.8}
\newcommand{\rtwo}{2.2}
\node[mydot]	(a1) at (\rone*cos{90},\rone*sin{90})					{};
\node[mydot]	(a2) at (\rone*cos{210},\rone*sin{210})				{};
\node[mydot]	(a3) at (\rone*cos{330},\rone*sin{330})				{};
\node[mydot]	(b1) at (\rtwo*cos{90},\rtwo*sin{90})					{};
\node[mydot]	(b2) at (\rtwo*cos{210},\rtwo*sin{210})				{};
\node[mydot]	(b3) at (\rtwo*cos{330},\rtwo*sin{330})				{};
\path (a1) edge[post]		node[xshift=2mm,yshift=1mm] 		{$s$}		(a3);
\path (a3) edge[post]		node[yshift=-2mm]				{$t$}		(a2);
\path (a2) edge[post]		node[xshift=-2mm,yshift=1mm]		{$r$}		(a1);
\path (a1) edge[post]		node[xshift=1.5mm,yshift=-0.5mm]	{$t$}		(b1);
\path (a2) edge[post]		node[xshift=-0.3mm,yshift=1.8mm]	{$s$}		(b2);
\path (a3) edge[post]		node[xshift=0.3mm,yshift=1.7mm]	{$r$}		(b3);
\path (b1) edge[post]		node[xshift=2mm,yshift=1mm] 		{$s$}		(b3);
\path (b3) edge[post]		node[yshift=-2mm]				{$t$}		(b2);
\path (b2) edge[post]		node[xshift=-2mm,yshift=1mm]		{$r$}		(b1);
\end{tikzpicture}
\caption{$W$-digraph for Example~\ref{example:affinea2}}
\label{fig:cyclic3}
\end{figure}
Also, $\Gamma$  
provides an example in which 
the equation of Theorem~\ref{theorem:charactervalues}(ii) fails: 
with 
  $y = w^{-1} = r s t$,  one checks that
$\chi_{\Gammarev}(T_y) = 
{}^{\sigma} \chi_{\Gamma} (T_{y^{-1}}^{-1} ) = 2$
and
$\varepsilon_{y} u_{y} \chi_{\Gamma} \left(  T_{y}^{-1} \right) = - 2$.  
Moreover, $M(\Gamma)$ does not afford
a  $W$-graph by Theorem~\ref{theorem:wgraphacyclic}.
\end{example}


Even
if $(W,S)$ is finite and $\Gamma$ is connected, there may not exist
a bar operator on $M(\Gamma)$ that fixes the source of 
$\Gamma$, as the next example shows.

\begin{example}
\label{example:nosourcefixingbarop}
Let $W = W(B_3) = \spanof{r,s,t}$, with
$n(r,s) = 3$, $n(r,t) = 2$, $n(s,t) = 4$.  Let $\Gamma$ be
the $W$-digraph of Figure~\ref{fig:b3dgr21}.
\begin{figure}[ht]
\centering
\begin{tikzpicture}
[node distance=0.6cm,%
pre/.style={<-,shorten <=1pt,>=angle 45},%
post/.style={->,shorten >=1pt,>=angle 45}];%
rectangle/.style={inner sep=0pt,minimum size=5mm}];%
\newcommand{\bigvshift}{-25mm}
\newcommand{\smallshift}{2mm}
\tikzstyle{mydot}=[circle,on grid,draw=black,fill=black,inner sep=0pt,minimum size=1mm];
\node[rectangle] (v0)									{${\strut}v_0$};
\node[rectangle] (v2) [right=of v0,xshift=\smallshift]				{${\strut}v_2$}
	edge[pre]				node[yshift=2.5mm]		{$s$}		(v0);
\node[rectangle] (v4) [right=of v2,xshift=\smallshift]				{${\strut}v_4$}
	edge[pre,bend right=20]	node[yshift=2.5mm]		{$r$}			(v2)
	edge[pre,bend left=20]	node[yshift=-2.5mm]		{$t$}			(v2);
\node[rectangle] (v6) [right=of v4,xshift=\smallshift]				{${\strut}v_6$}
	edge[pre]				node[yshift=2.5mm]		{$s$}		(v4);	
\node[rectangle] (v1) [below=of v0,yshift=\bigvshift]				{${\strut}v_1$}
	edge[pre]				node[xshift=-2.5mm]		{$t$}			(v0);	
\node[rectangle] (v3) [below=of v2,yshift=\bigvshift]				{${\strut}v_3$}
	edge[pre]				node[yshift=2.5mm]		{$s$}		(v1);
\node[rectangle] (v5) [below=of v4,yshift=\bigvshift]				{${\strut}v_5$}
	edge[pre,bend right=20]	node[yshift=2.5mm]		{$r$}			(v3)
	edge[pre,bend left=20]	node[yshift=-2.5mm]		{$t$}			(v3);
\node[rectangle] (v7) [below=of v6,yshift=\bigvshift]				{${\strut}v_7$}
	edge[pre]				node[yshift=2.5mm]		{$s$}		(v5)
	edge[pre]				node[xshift=2.5mm]		{$t$}			(v6);
\node[rectangle] (v8) [below=of v2,yshift=\smallshift]			{${\strut}v_8$}
	edge[pre]			node[xshift=-2mm,yshift=-2mm] {$r$}		(v0);
\node[rectangle] (v9) [above=of v3,,yshift=-\smallshift]			{${\strut}v_9$}
	edge[pre]				node[xshift=-2.5mm] 	{$t$}			(v8)
	edge[pre]			node[xshift=-2mm,yshift=2mm] {$r$}		(v1);
\node[rectangle] (v10) [below=of v4,,yshift=\smallshift]			{${\strut}v_{10}$}
	edge[pre]				node[yshift=2.5mm]		{$s$}		(v8)
	edge[post]		node[xshift=2mm,yshift=-2mm] {$r$}		(v6);
\node[rectangle] (v11) [above=of v5,yshift=-\smallshift]			{${\strut}v_{11}$}
	edge	[pre]				node[yshift=-2.5mm]		{$s$}		(v9)
	edge[pre]				node[xshift=2.5mm]		{$t$}			(v10)
	edge[post]		node[xshift=2mm,yshift=2mm] {$r$}			(v7);
\end{tikzpicture}
\caption{Digraph for Example~\ref{example:nosourcefixingbarop}}
\label{fig:b3dgr21}
\end{figure}
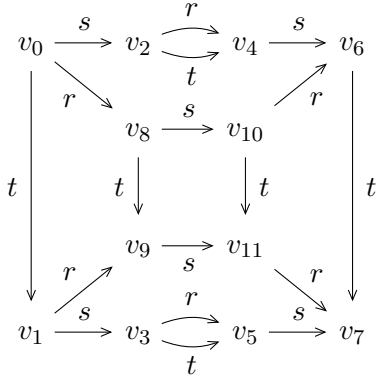
Suppose $M=M(\Gamma)$ admits a 
source-fixing bar operator $\varphi : M \rightarrow M$.
Since $v_0$ is the source
of $\Gamma$ and $v_4 = T_r T_s v_0$, we have 
\begin{equation}
\begin{split}
\varphi(v_4)
& =
\overline{T_r} \, \overline{T_s} \, v_0 
=
u^{-4} (T_r - (u^2-1)) (T_s - (u^2-1)) v_0 \cr
& = 
u^{-4}
\left( v_4 - (u^2-1) v_2 - (u^2-1)v_8 + (u^2-1)^2 v_0 \right). \cr
\end{split}
\label{eq:b3dgr21a}
\end{equation}
On the other hand, $v_4 = T_t T_s v_0$, so we also have
\begin{equation}
\begin{split}
\varphi(v_4)
& =
\overline{T_t} \, \overline{T_s} \, v_0 
=
u^{-4} (T_t - (u^2-1)) (T_s - (u^2-1)) v_0 \cr
& =
u^{-4}
\left( v_4 - (u^2-1) v_2 - (u^2-1)v_1 + (u^2-1)^2 v_0 \right). \cr
\end{split}
\label{eq:b3dgr21b}
\end{equation}
Since \eqref{eq:b3dgr21a} and
\eqref{eq:b3dgr21b} cannot simultaneously hold,
a contradiction is reached. Thus $M$ does not admit
a source-fixing bar operator.
\end{example}


Let $(W,S)$ be finite, and let $\Gamma$ be a finite $W$-digraph.  
By Theorem~\ref{theorem:charactervalues}  we have
$
\eval{\chi_{\Gamma}}{u}{-1} 
=
\sgn_W \cdot \eval{\chi_{\Gamma}}{ u}{1}.
$
Thus if $(W,S)$ has no connected components with
exceptional characters in the sense of 
Gyoja \cite{gyojawgraph}, then 
$\eval{\chi_{\Gamma}}{u}{1} =
\eval{\chi_{\Gamma}}{u}{-1}$ is self-associated, that is,
$\eval{\chi_{\Gamma}}{u}{1} = \sgn_W \cdot \eval{\chi_{\Gamma}}{u}{1}$.
In particular, if $(W,S)$ has no component of
type $H_3$, $H_4$, $E_7$, or $E_8$, 
then $\eval{\chi_{\Gamma}}{u}{1}$ is self-associated.
Our final example shows that if 
$(W,S)$ has an exceptional character, then $\eval{\chi_{\Gamma}}{u}{1}$
need not be self-associated.

\begin{example}
Let $W = W(H_3) = \spanof{r,s,t}$ with
$n(r,s) = 3$, $n(s,t) = 5$, $n(r,t)=2$.  The $W$-digraph
$\Gamma$ of Figure~\ref{fig:h3dgr8}
affords the non-self-associated character
$\eval{\chi_{\Gamma}}{u}{1} = 1_W + \sgn_W + \chi_{4^\prime}$, 
where $\chi_{4^\prime}$ is the irreducible character
of degree 4 with value $-4$ at the longest element of $W$.  
Then 
$\eval{\chi_{\Gammarev}}{u}{1}
= \sgn_W \cdot \eval{\chi_{\Gamma}}{u}{1} 
\ne \eval{\chi_{\Gamma}}{u}{1}$.  
\begin{figure}[ht]
\[
\begin{tikzpicture}
[node distance=1.5cm,%
pre/.style={<-,shorten <=1pt,>=angle 45},%
post/.style={->,shorten >=1pt,>=angle 45}];%
rectangle/.style={inner sep=0pt,minimum size=5mm}];%
\tikzstyle{mydot}=[circle,on grid,draw=black,fill=black,inner sep=0pt,minimum size=1mm];
\node[mydot] (alpha1)									{ };
\node[mydot] (alpha2) [right=of alpha1,xshift=2mm]				{ }
	edge[pre,dashed]		node[yshift=2.5mm]	{$s$}		(alpha1);
\node[mydot] (beta1) [below=of alpha1]						{ }
	edge[pre,dashed,bend left=20]	    node[xshift=-2.5mm] {$r$}	(alpha1)
	edge[pre,dashed,bend right=20]  node[xshift=2.5mm] {$t$} 	(alpha1);
\node[mydot] (beta2)	[below=of alpha2]					{ }
	edge[pre]			node[xshift=2.5mm]			{$r$}		(alpha2)
	edge[pre]			node[yshift=2.5mm]			{$s$}	(beta1);
\node[mydot] (alpha3) [right=of alpha2,xshift=2mm]				{ }
	edge[pre]			node[yshift=2.5mm]			{$t$}		(alpha2);
\node[mydot] (beta3)	[below=of alpha3]					{ }
	edge[pre]				node[yshift=2.5mm]		{$t$}		(beta2)
	edge[pre,bend left=20]	node[xshift=-2.5mm]		{$r$}		(alpha3)
	edge[pre,bend right=20] node[xshift=2.5mm]		{$s$}	(alpha3);
\end{tikzpicture}
\]
\caption{Digraph for Example~\ref{example:h3dgr8}}
\label{fig:h3dgr8}
\end{figure}
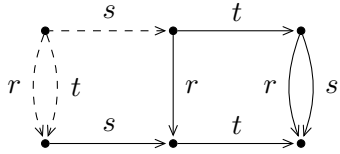
\label{example:h3dgr8}
\end{example}


\bibliographystyle{plain}
\bibliography{combined}


\vfil\eject
\enddocument
\bye
\bye